\definecolor{orange}{rgb}{1,0.5,0}
\DeclareMathAlphabet{\mathpzc}{OT1}{pzc}{L}{it} 
\def\a{\alpha}
\newtheorem{definition}{Definition}[section]
\newtheorem{proposition}[definition]{Proposition}
\newtheorem{theorem}{Theorem}
\newtheorem{corollary}[definition]{Corollary}
\newtheorem{remark}[definition]{Remark}
\newtheorem{lemma}[definition]{Lemma}
\def\C{\mathbb{C}}
\def\geq{\geqslant}
\def\leq{\leqslant}
\def\R{\mathbb{R}}
\def\T{\mathbb{T}}
\def\Z{\mathbb{Z}}
\def\N{\mathbb{N}}
\def\Q{\mathbb{Q}}
\def\cB{\mathcal{B}}
\def\cC{\mathcal{C}}
\newcommand{\bea}{\begin{eqnarray}}
  \newcommand{\eea}{\end{eqnarray}}
  \newcommand{\beab}{\begin{eqnarray*}}
  \newcommand{\eeab}{\end{eqnarray*}}
\renewcommand{\a}{\alpha}
  \newcommand{\be}{\begin{equation}}
  \newcommand{\ee}{\end{equation}}
\newcommand{\cD}{\mathcal D}
\title{Mutliple mixing and disjointness for time changes of bounded-type Heisenberg nilflows}
\author{Giovanni Forni and Adam Kanigowski}
\begin{document}
\baselineskip=14pt \maketitle
\begin{abstract} We study time changes of bounded type Heisenberg nilflows $(\phi_t)$ acting on the Heisenberg nilmanifold $M$. We show that for every positive $\tau\in W^s(M)$,  $s~>~7/2$, every non-trivial time change $(\phi_t^{\tau})$ enjoys the {\it Ratner property}. As a consequence every mixing time change is mixing of all orders. Moreover we show that for every  $\tau\in W^s(M)$, $s>9/2$ and every $p,q\in \N$, $p\neq q$,  $(\phi_{pt}^\tau)$ and $(\phi_{qt}^\tau)$ are disjoint. As a consequence {\it Sarnak Conjecture} on M\"obius disjointness holds for all such time changes. 
\end{abstract}

\tableofcontents

\section{Introduction}
In this paper we study ergodic properties of time changes of {\it Heisenberg nilflows}. Nilsystems on (non-abelian) nilmanifolds are classical examples of systems which share some features from both the {\it elliptic} and the {\it parabolic} world. They always have a non-trivial Kronecker factor which is responsible for the elliptic behavior (in particular they are never weakly mixing). On the other hand, orthogonally to the elliptic factor they  exhibit
polynomial speed of divergence of nearby trajectories and are polynomially mixing, which are properties characteristic of parabolic systems. 

We are interested in the lowest dimensional (non-abelian) situation, i.e. nilflows on $3$-dimensional Heisenberg nilmanifolds. In~\cite{AFU} it was shown that, for every (ergodic) Heisenberg nilflow, there exists a dense set of smooth time changes which are mixing. This result was strengthened in~\cite{FK}, where it was shown that for a full measure set of Heisenberg nilflows a {\it generic} time change is mixing, and moreover one has a ``stretched-polynomial'' decay of correlations for any pair of sufficiently smooth observables. For Heisenberg nilflows of {\it bounded type} the decay of correlations is estimated in~\cite{FK} to be polynomial, as expected according to the
``parabolic paradigm'' (see \cite{HK}, section 8.2.f).  The mixing result of~\cite{AFU} was generalized in~\cite{Rav} to a class of nilflows on higher step nilmanifolds, called quasi-Abelian, which includes suspension flows over toral skew-shifts, and then recently to all non-Abelian niflows in \cite{AFRU}. These general results reach no conclusion about the speed of mixing.

For time changes of horocycle flows, polynomial decay of correlations, as well as the Lebesgue spectral property,  were proved in \cite{FU}. For time changes of nilflows, even for Heisenberg nilflows of bounded type,  it is unclear whether the spectrum has an absolutely continuous component.

 It follows from \cite{AFU} and \cite{FK} that by a time change one can alter  the dynamical  features of Heisenberg niflows, i.e. the elliptic factor becomes trivial for the time-changed flow and the mixing property holds (with polynomial decay of correlations for bounded type nilflows). It is therefore natural to ask to what extent the time-changed flow can behave, roughly speaking, as a ``prototypical'' parabolic flow (there is no widely accepted formal definition of a parabolic system). One of the characteristic features of parabolic systems is the {\it Ratner property} which quantifies the polynomial speed of divergence of nearby trajectories. It was first established by 
 M.~Ratner in~\cite{Rat1} in the class of horocycle flows and was applied to prove Ratner's rigidity phenomena in this class. Moreover, in~\cite{Rat2}, M.~Ratner showed that the Ratner property survives under $C^1$ smooth time changes of horocycle flows, hence similar rigidity phenomena hold for time changes. One of the most important consequences of this property is that a mixing system with the Ratner property is mixing of all orders, see~\cite{RT}. 

Recently Ratner's property (or its variants) was observed in a new  class of (non-homogeneous) systems, that of smooth flows on surfaces with finitely many (saddle-like) singularities. In \cite{FayKa}, the authors studied the case of smooth mixing flows on the two-torus and established the SWR-property\footnote{Acronym for switchable weak Ratner. It was also shown that the original Ratner property does not hold.}. This property allows to establish the Ratner-type divergence of orbits, either in the future or in the past (depending on points), and moreover it has the same dynamical consequences as the original Ratner property. Then the authors showed in particular that the SWR-property holds for a full measure set of mixing flows with logarithmic singularities (Arnol'd flows) thereby proving higher order mixing in this class. The result in~\cite{FayKa} was strengthened in~\cite{KKU}, where the authors showed that the SWR-property holds for a full measure set of Arnol'd flows on surfaces of higher genus. 

It is therefore natural to ask whether a Ratner property holds in the class of Heisenberg nilflows. In \cite{KL} it is shown that Ratner's property implies in particular that the Kronecker factor is trivial and hence no nilflow can enjoy it. The situation is very different for {\it non-trivial time changes} of Heisenberg nilflows. 

\medskip 
Let $\mathfrak H$ denote the $3$-dimensional Heisenberg group and let $\mathfrak h$ denote its Lie algebra. Let $M:= \Gamma \backslash  \mathfrak H$ denote a Heisenberg 
nilmanifold, that is, the quotient of $\mathfrak H$ over a (co-compact) lattice  $\Gamma <\mathfrak H$.  For any $W\in \mathfrak h$, the flow $(\phi_t^{W})$ generated by the  projection to $M$ of the left-invariant vector field $W$ on $\mathfrak H$, is called a Heisenberg nilflow (see section \ref{sec.Heis} for the definition).

 A vector field $W\in \mathfrak h$ and the corresponding flow $(\phi_t^{W})$ are  called of {\it bounded type} if their projections on the Kronecker factor (which are respectively
 a constant coefficients vector field and the corresponding  linear flow on a $2$-dimensional torus) are of bounded type.

For any $W\in \mathfrak h$ and any positive function $\tau\in C^1(M)$, let $(\phi_t^{W,\tau})$ denote the time change of the nilflow $(\phi_t^{W})$, that is, the flow generated by the vector field $\tau W$ on $M$.  For every $s>0$, let $W^s(M)$ denote the standard Sobolev space. By the Sobolev embedding theorem we have that  $W^s(M) \subset C^k(M)$, for every $s> 3/2 + k$.

Our first main result establishes the Ratner property (see section \ref{sec:rat} for the definition)  for time changes of bounded type Heisenberg nilflows in a very strong sense. In fact our first main result is the following. 
\begin{theorem}\label{main:thm'} Let $W\in \mathfrak h$ be of bounded type. For any positive function $\tau\in W^s(M)$ with $s>7/2$,  either the time change is trivial ($1/\tau$ is
cohomologous to a constant for the nilflow $(\phi_t^W)$) or the time-changed flow $(\phi_t^{W,\tau})$ enjoys the Ratner property.
\end{theorem}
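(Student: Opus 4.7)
The plan is to verify the Ratner property directly by analysing the divergence of nearby orbits of $(\phi_t^{W,\tau})$, using the two-step nilpotent structure of $\mathfrak{h}$ to reduce everything to weighted ergodic integrals along the underlying nilflow $(\phi_t^W)$. Fix a typical $x \in M$ and a nearby point $y = x\exp(v)$ with $v = \alpha X + \beta Y + \gamma Z \in \mathfrak{h}$ small, where $X, Y, Z$ is a Heisenberg basis with $[X,Y]=Z$ and $W = aX+bY+cZ$. The Baker--Campbell--Hausdorff formula gives
$$
\phi_\sigma^W(y) \;=\; \phi_\sigma^W(x)\cdot \exp\Bigl(\alpha X + \beta Y + \bigl[\gamma + \sigma(\alpha b - \beta a)\bigr]Z\Bigr),
$$
so transverse perturbations produce a central drift along $Z$ that grows linearly in $\sigma$. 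For the pure nilflow this drift cannot be absorbed by a time shift (since $Z$ is not tangent to the orbit), which is the Kronecker factor obstruction to Ratner's property for $(\phi_t^W)$ itself. The whole point of a nontrivial time change is that it converts this $Z$-drift into a time discrepancy along the time-changed flow, which \emph{can} be absorbed by a small time shift.

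Write the time-changed orbits as reparametrisations $\phi_t^{W,\tau}(x) = \phi_{S_x(t)}^W(x)$, where $S_x(t)$ is defined implicitly by $\int_0^{S_x(t)} \tau^{-1}(\phi_\sigma^W(x))\,d\sigma = t$. The shift candidate $p(t)$ is chosen so that $\phi_{t+p(t)}^{W,\tau}(x) \approx \phi_t^{W,\tau}(y)$, which after linearisation reduces to matching the time discrepancy $\Delta(t) := S_y(t) - S_x(t)$ with the flow-direction component of the BCH displacement above. Linearising the defining implicit equation for $S_y$ around $S_x$ and substituting the BCH expression yields
$$
\Delta(t) \;\approx\; \tau\bigl(\phi_{S_x(t)}^W(x)\bigr)\int_0^{S_x(t)} \frac{\bigl(\alpha X + \beta Y + [\gamma + \sigma(\alpha b - \beta a)]Z\bigr)\tau}{\tau^{2}}\bigl(\phi_\sigma^W(x)\bigr)\,d\sigma .
$$
So the entire question reduces to controlling ergodic integrals of smooth observables along $(\phi_t^W)$, plus one time-weighted integral stemming from the $Z$-shear of the transverse perturbation.

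This is where the bounded-type hypothesis enters. By the Flaminio--Forni theory of ergodic integrals for Heisenberg nilflows, sharpened in the bounded-type regime as in \cite{FK}, mean-zero observables in $W^{s-1}(M)$ with $s > 7/2$ have ergodic integrals $\int_0^T f(\phi_\sigma^W(x))\,d\sigma = O(T^{1/2})$ on almost every orbit. Integration by parts then gives $\int_0^T \sigma f(\phi_\sigma^W(x))\,d\sigma = O(T^{3/2})$, and so the dominant contribution to $\Delta(t)$ is the time-weighted term $(\alpha b - \beta a)\,\tau(\phi_{S_x}^W(x))\int_0^{S_x(t)} \sigma (Z\tau/\tau^{2})(\phi_\sigma^W(x))\,d\sigma$, of order $|v|\cdot S_x(t)^{3/2}$. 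Nontriviality of the time change is, by definition, the non-coboundary condition on $1/\tau$ over $(\phi_t^W)$, and one verifies that this condition is in fact equivalent to the dominant coefficient of the weighted integral having a robustly nonzero asymptotic rate; this is the dichotomy in the theorem.

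The Ratner property then follows by working in a time window $t \in [L, (1+\rho)L]$ with $L \asymp |v|^{-2/3}$, on which $\Delta(t) = O(1)$ and varies slowly because the relative derivative $\dot\Delta/\Delta$ is only $O(1/t)$. Consequently the shift $p(t) \approx \Delta(t)/\tau(\phi_{S_x(t)}^W(x))$ stays inside a fixed compact subset $P \subset \mathbb{R}\setminus\{0\}$ for a set of $t$ of positive relative density in this window, and the residual discrepancy between $\phi_t^{W,\tau}(y)$ and $\phi_{t+p(t)}^{W,\tau}(x)$ is of order $|v|$ plus the controlled sub-dominant error. The main obstacle is quantitative: one needs the $O(T^{3/2})$ weighted bound to hold with sufficiently precise lower-order remainders, uniformly for $x$ in a large-measure set, so that the dominant polynomial growth survives after dividing by the time-change normalisation $dS_x/dt = \tau(\phi_{S_x}^W(x))$ and one can extract a positive-density set of good times. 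The Sobolev threshold $s > 7/2$ is dictated by this analysis: it is the minimal regularity ensuring that $\tau \in C^2$ and that the quadratic combinations $X\tau/\tau^2$, $Y\tau/\tau^2$, $Z\tau/\tau^2$ lie in a Sobolev class to which the Flaminio--Forni polynomial bounds apply.
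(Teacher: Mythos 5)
Your proposal correctly identifies the mechanism---the central $Z$-shear coming from the $2$-step nilpotent structure is turned, by the nonconstant time change, into a flow-direction drift---and it invokes the same Flaminio--Forni equidistribution bounds that the paper relies on. The paper uses the same ideas but in the coordinates of the special-flow representation over the linear skew-shift $\Phi_{\alpha,\beta}$ of $\T^2$, where the $Z$-shear becomes the $y$-shear and your weighted ergodic integral becomes $S_n(\partial f/\partial y)$ in Lemma~\ref{lem:diver}. However, two steps that you defer with \emph{``one verifies''} or flag as \emph{``the main obstacle''} are precisely the nontrivial core of the proof, and your sketch does not supply them.

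First, you assert that ``$1/\tau$ is not cohomologous to a constant'' is \emph{equivalent} to the dominant coefficient of the weighted ergodic integral of $Z(1/\tau)$ having a robustly nonzero asymptotic rate. This is exactly what must be proved, and it is where the representation-theoretic analysis enters. The paper establishes it through the decomposition of $L^2(\T^2)$ into $\Phi_{\alpha,\beta}$-invariant subspaces $H_{m,n}$, the explicit distributional obstruction $D_{m,n}$ to the cohomological equation (Theorem~\ref{thm:smoothcb}), the $L^2$-asymptotics governed by $|D_{m,n}(g)|$ (Lemma~\ref{lemma:L2bounds}), and the resulting sup-norm lower bound $\|S_N(g)\|_{C^0} \gtrsim N^{1/2}$ for the relevant derivative $g = \partial f/\partial y$ (Lemma~\ref{lem:gro}). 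The $O(T^{3/2})$ upper bound you derive does not, by itself, yield a drift of order $1$. Second, even given the sup-norm lower bound, turning it into a pointwise drift estimate on a controlled time window is what Proposition~\ref{prop:split} accomplishes, and it is not a soft consequence of ``slow variation of $\Delta$'': because the competing error terms in your linearization are also of order $T^{3/2}$ times the transverse displacement, one must push the evaluation time $m$ so far along the orbit that the shear $m\delta_x$ dominates both $\delta_y$ and the error (cf. the choice $m \geq 10C'k/c''$), and then \emph{realize} the near-maximum of $|S_k(\partial f/\partial y)|$ at such an $m$ using the quantitative density of orbit segments under the bounded-type hypothesis (Lemma~\ref{lem:lowbound}). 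Until these two steps are filled in, the proposal is a correct roadmap but not a proof.
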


Recall that the famous Rokhlin problem asks whether mixing implies mixing of all orders.
The above result implies that the answer to the Rokhlin problem is positive for smooth time changes of bounded type Heisenberg nilflows:
\begin{corollary} Let $W\in \mathfrak h$ be of bounded type and let 
$$
\cD^s(W)=\{(\phi_t^{W,\tau})\;:\; \tau\in W^s(M), \tau>0\}.
$$
Then, for any $s\geq 7/2$, every element of $\cD^s(W)$ is mixing if and only if it is mixing of all orders.
 As proved in \cite{AFU} and \cite{FK}, for $s\geq 7/2$ and $W$ of bounded type, mixing is generic in the set $\cD^s(W)$.
\end{corollary}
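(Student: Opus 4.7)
The plan is to deduce the corollary as an immediate consequence of Theorem~\ref{main:thm'} combined with the Ratner--Thouvenot result from \cite{RT}, which asserts that any mixing flow with the Ratner property is automatically mixing of all orders.

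I would first dispense with the reverse implication, which is trivial: mixing of all orders includes mixing of order two, i.e.\ ordinary mixing. For the forward direction, I would fix $s \geq 7/2$ and take an arbitrary mixing element $(\phi_t^{W,\tau}) \in \cD^s(W)$, and then show that such a time change cannot be trivial. My argument: if $1/\tau$ were cohomologous to a constant for the nilflow $(\phi_t^W)$, then the time-changed flow $(\phi_t^{W,\tau})$ would be measure-theoretically isomorphic to a constant rescaling of $(\phi_t^W)$ itself; but $(\phi_t^W)$ admits the linear flow on its $2$-dimensional Kronecker torus as a non-trivial factor (recall that $W$ is assumed of bounded type, so in particular its projection to the torus is nonzero), hence is not weakly mixing, in particular not mixing, contradicting the hypothesis on the time-changed flow.

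Having excluded trivial time changes, Theorem~\ref{main:thm'} would apply directly to give that $(\phi_t^{W,\tau})$ enjoys the Ratner property; the Ratner--Thouvenot theorem from \cite{RT} would then deliver mixing of all orders, completing the argument. Because all the substantive work is already encoded in Theorem~\ref{main:thm'}, I do not expect any genuine obstacle here. The only point requiring a moment of care is verifying that triviality of the time change really is incompatible with mixing, which reduces to the fact that the underlying Heisenberg nilflow is never mixing due to its toral Kronecker factor. The final sentence of the statement --- genericity of mixing within $\cD^s(W)$ --- is not something I would reprove: it is quoted verbatim from \cite{AFU} (which produces a dense set of mixing time changes) and \cite{FK} (which upgrades this to genericity under the bounded-type hypothesis and $s\geq 7/2$).
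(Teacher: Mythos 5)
Your proposal is correct and follows the same route the paper implicitly intends: the reverse implication is immediate, triviality of the time change is ruled out for mixing flows because it would force the time-changed flow to be a constant rescaling of the nilflow, which has a non-trivial Kronecker factor and hence is not even weakly mixing, and then Theorem~\ref{main:thm'} plus the result of \cite{RT} delivers mixing of all orders. One small correction: the reference \cite{RT} is due to Ryzhikov and Thouvenot, not ``Ratner--Thouvenot'' as you wrote.
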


Moreover, by \cite{KL}, we have the following strong dichotomy for time changes of Heisenberg nilflows:
\begin{corollary}\label{corA} Let $W\in \mathfrak h$ be of bounded type. Then for every positive function $\tau\in W^s(M)$ with $s>7/2$, either 
the time change is trivial ($1/\tau$ is cohomologous to a constant  for $(\phi_t^W)$) or $(\phi_t^{W,\tau})$ is mildly mixing (no non-trivial rigid factors). 
\end{corollary}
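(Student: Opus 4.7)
The plan is to derive Corollary \ref{corA} as an immediate consequence of Theorem \ref{main:thm'} together with the structural result of \cite{KL}. Theorem \ref{main:thm'} already provides, under the exact hypotheses of Corollary \ref{corA}, the dichotomy: either $1/\tau$ is cohomologous to a constant for the nilflow $(\phi_t^W)$ (the trivial case, which is the first alternative in the statement), or the time-changed flow $(\phi_t^{W,\tau})$ satisfies the Ratner property. In the first case there is nothing to prove, and the entire remaining task is to upgrade ``Ratner property'' to ``mildly mixing'' in the second case.

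The first step in that upgrade is to record ergodicity of $(\phi_t^{W,\tau})$. Since $W$ is of bounded type, its projection to the Kronecker torus is an irrational linear flow of bounded type, so the Heisenberg nilflow $(\phi_t^W)$ is uniquely ergodic on $M$, a classical fact for Heisenberg nilflows. Because $\tau\in W^s(M)\subset C^1(M)$ is strictly positive, the time change $(\phi_t^{W,\tau})$ has exactly the same orbit partition as $(\phi_t^W)$, and therefore it inherits unique ergodicity with respect to the invariant probability measure proportional to $\tau$ times the Haar volume. In particular it is ergodic. The second step is to invoke the result from \cite{KL} referred to in the paragraph immediately before Corollary \ref{corA}: an ergodic flow enjoying the Ratner property admits no non-trivial rigid factor. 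Combining this implication with the ergodicity just noted and with the Ratner property supplied by Theorem \ref{main:thm'}, one concludes that $(\phi_t^{W,\tau})$ is ergodic and has no non-trivial rigid factor, i.e.\ it is mildly mixing, which is the second alternative of the corollary.

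I do not anticipate a substantive obstacle: the proof is a clean packaging of two already established inputs, and the only point requiring verification is that the version of the Ratner property produced by Theorem \ref{main:thm'} matches the hypothesis in \cite{KL}. Since both works use the classical Ratner property of \cite{Rat1}, this check should be immediate. Note that we need no further work to conclude beyond weak mixing (which would follow merely from triviality of the Kronecker factor, also asserted in \cite{KL}); the full statement ``no non-trivial rigid factors'' must come from the stronger result of \cite{KL}, not from Kronecker triviality alone.
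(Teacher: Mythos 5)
Your proposal is correct and follows the same route the paper implicitly uses: the corollary is stated as an immediate combination of Theorem~\ref{main:thm'} (trivial time change or Ratner property) with the result of \cite{KL} that flows with Ratner's property have no non-trivial rigid factors, and your added remark on ergodicity of the time change is a standard observation that makes the application of \cite{KL} legitimate. Nothing further is needed.
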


It turns out that  Heisenberg nilflows of bounded type (as in Theorem \ref{main:thm'}) are the only known examples, beyond horocycle flows and their time changes, for which the original Ratner property holds. 

Our second main result deals with disjointness properties of time changes of Heisenberg nilflows. It is based on a variant of a parabolic disjointness criterion from \cite{KLU}. We have:

\begin{theorem}\label{thm:main2'} Let $W\in \mathfrak h$ be of bounded type. For any positive function $\tau\in W^s(M)$ with $s>9/2$, if  the time change is non-trivial 
($1/\tau$ is not cohomologous to a constant  for $(\phi_t^W)$), then the flows $(\phi_{pt}^{W,\tau})$ and $(\phi_{qt}^{W,\tau})$ are disjoint for 
all $p,q\in \N$, $p\neq q$.
\end{theorem}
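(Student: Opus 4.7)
The plan is to apply a parabolic disjointness criterion in the spirit of \cite{KLU}. Such criteria deduce disjointness of $(\phi_{pt})$ and $(\phi_{qt})$ from a \emph{quantitative and distinguishable} polynomial shearing of nearby orbits in the centre direction; the shearing estimates themselves are largely furnished by the work behind Theorem \ref{main:thm'}. Using the decomposition $\mathfrak h=\R W\oplus \R V\oplus \R Z$ with $Z=[W,V]$ central, and writing $\phi_t^{W,\tau}(x)=\phi_{u(t,x)}^W(x)$ for $u(t,x)$ the inverse of the Birkhoff integral $\sigma_u(x)=\int_0^u \tau^{-1}(\phi_s^W x)\,ds$, a small perturbation $\exp(\epsilon V)$ of the initial condition shears the orbit of the un-deformed nilflow in the $Z$-direction by $\tfrac{\epsilon}{2} t + O(\epsilon^2)$; after the time change this becomes $\tfrac{\epsilon}{2}\, u(t,x) + O(\epsilon^2)$. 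Hence the shearing for $(\phi_{ct}^{W,\tau})$ is driven by $u(ct,x)$.

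Decomposing $u(ct,x)=ct/\bar\tau + R_c(t,x)$, where $\bar\tau$ is the mean of $\tau$, the remainder $R_c$ is essentially a Birkhoff integral of the non-coboundary $1/\tau - 1/\bar\tau$ over an orbit segment of length $ct/\bar\tau$. For $p\neq q$ the remainders $R_p(t,x)$ and $R_q(t,x)$ are integrals over segments of \emph{different} lengths, so the combination $R_p(t,x) - (p/q) R_q(t,x)$ is genuinely non-trivial. Using the bounded-type hypothesis and Sobolev regularity $s>9/2$, together with the renormalisation/cohomological estimates of \cite{FK}, one obtains polynomial lower bounds for this difference modulo the centre lattice on a set of positive measure. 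This is where the extra derivative beyond the $s>7/2$ of Theorem \ref{main:thm'} is consumed.

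Given an ergodic joining $\rho$ of $(\phi_{pt}^{W,\tau})$ and $(\phi_{qt}^{W,\tau})$, one then follows the standard KLU procedure: perturb generic points in the first coordinate by $\exp(\epsilon V)$ and examine how the perturbation is transported along the joint flow. The shearing discrepancy quantified above forces the weak-$*$ limit of the resulting family of joinings, along a well-chosen sequence of times $t_n\to\infty$, to be non-trivially invariant under a one-parameter family of centre-direction asymptotic translations which act differently on the two coordinates; combined with mild mixing of each factor (Corollary \ref{corA}), this is compatible only with $\rho$ being the product measure.

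The principal technical challenge is the quantitative control of $R_p(t,x)-(p/q)R_q(t,x)$: one must show it is non-degenerate modulo the centre lattice on a positive-measure set and along an unbounded sequence of times, and moreover that it satisfies the precise uniformity required by the KLU criterion. This requires simultaneous control of Birkhoff sums of $1/\tau$ over orbit segments whose lengths differ by the factor $p/q$, which is where the regularity $s>9/2$ is crucial; it provides enough smoothness (via Sobolev trace and interpolation) to handle the second-order corrections to $u(ct,x)$ and to adapt the variant of the KLU criterion to the Heisenberg setting with its non-trivial Kronecker factor.
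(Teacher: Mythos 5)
Your plan is in the right spirit: reduce to the special-flow representation over the skew-shift, invoke a KLU-type parabolic disjointness criterion, and drive it with Ratner-type shearing estimates that distinguish the $p$- and $q$-flows via the $T^{1/2}$ growth of Birkhoff integrals of $1/\tau-\overline{1/\tau}$. This is indeed what the paper does (Proposition \ref{disjoint.flows}, Lemma \ref{cocycle}, Proposition \ref{prop:cruc}). However there are several concrete gaps between your sketch and a working proof.

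First, the perturbation used in the criterion is not the one you propose. The paper takes $A_k(x,y)=(x,y+1/k)$, i.e.\ a translation in the central ($Z$) direction of the base torus, not $\exp(\epsilon V)$. This choice is not incidental: the hypotheses of Proposition \ref{disjoint.flows}/Lemma \ref{cocycle} (see condition \eqref{base:close}) require the base orbits of $x$ and $A_kx$ to remain $\kappa'$-close over the whole time window $[0,\max(1,\zeta)(M'+L')]$, which a central translation gives for free, whereas a $V$-perturbation makes the $y$-coordinate drift linearly. The Ratner discrepancy one needs is a shift in the \emph{flow direction} (the $v$ in the criterion), produced by the shearing of the roof cocycle under a $Z$-perturbation, not by an asymptotic $Z$-invariance of a limit joining. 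Your closing paragraph (``weak-$*$ limit \dots\ invariant under a one-parameter family of centre-direction asymptotic translations'') describes a different style of argument; it is unclear how to make it close here, since the time-changed flow does not commute with central translations, and the paper avoids this by directly verifying the criterion.

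Second, the ``principal technical challenge'' you flag — non-degeneracy of the mismatch between the two remainders — is indeed the crux, but you only assert it, and your asserted form is off. Because Birkhoff sums of a non-coboundary grow like $N^{1/2}$ (Lemmas \ref{lem:upperbound}, \ref{lem:gro}), comparing a Birkhoff sum over time $n$ with one over time $\zeta n$ (with $\zeta=q/p$) produces a factor $\zeta^{1/2}=(q/p)^{1/2}$, not $q/p$. The paper's Lemma \ref{lem:linsplit} establishes precisely that $|S_{n'}(h)(z,w)-\zeta^{-1/2}S_{[\zeta n']}(h)(x,y)|\gtrsim T^{1/2}$ for some $n'\lesssim T$, and its proof is genuinely delicate: it combines Lemma \ref{lem:supr} (finding windows where the sup norm grows at the heuristic $\zeta^{1/2}$ rate), Lemma \ref{lem:lowbound} (reaching the sup along an orbit), Lemma \ref{lem:zar} (a pigeonhole synchronization of the two orbits at multiples of $q_n$), Lemma \ref{lem:span} (a second-order Taylor expansion of the Birkhoff sum), and the cocycle identity to split the Birkhoff sums and localize the non-degeneracy. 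In addition, the full estimate (Proposition \ref{prop:cruc}) needs a case analysis depending on the relative sizes of $\delta_x$, $\delta_y$, $\delta_w$, with Lemma \ref{lem:vertsplit} handling the balanced regime. None of this structure appears in your sketch, and the claim ``genuinely non-trivial because the segments have different lengths'' is not by itself a proof: one must rule out accidental cancellation along the specific orbit segments, which is exactly what the pigeonhole/renormalization argument does.

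In short: the overall strategy is correct, but the choice of approximating automorphisms, the exponent $(p/q)^{1/2}$ versus $p/q$, the verification mechanism (direct criterion versus limit-joining invariance), and above all the proof of the key lower bound on the shearing mismatch are missing or misstated.
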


The above theorem should be compared with analogous disjointness results for other flows with Ratner's property. It follows from the renormalization equation for the horocycle flow,  which states that $g_sh_t=h_{e^{-2s}t}g_s$ for all $s,t\in \R$, that $h_{pt}$ and $h_{qt}$ are isomorphic (and hence not disjoint) for any $p,q\in \R\setminus\{0\}$. In \cite{Rat2}, joinings of time changes of horocycle flows were completely characterized by 
M.~Ratner. From Ratner's work one can derive that distinct powers of the same time change are disjoint unless the time change function is cohomologous to a constant \cite{FlaFo2}. A~different proof of this result, based on a new disjointness criterion for parabolic flows, was given recently in \cite{KLU}. Moreover in \cite{KLU} it is proved that for almost every Arnol'd flow on $\T^2$ the same assertion as in Theorem \ref{thm:main2'} holds. Therefore among known flows with Ratner's property, the horocycle flow is the only one for which  the conclusion of Theorem \ref{thm:main2'} does not hold. The heuristic reason for that is that the Ratner property for the horocycle flow depends only on the distance between points, and not on their position in space (since the space is homogeneous). In all other examples (for flows as in Theorem \ref{thm:main2'} in particular) the divergence depends also on position which allows to get stronger consequences (see section~\ref{sec:disj}).

Let us now briefly discuss the connection between Theorem \ref{thm:main2'} and Sarnak's Conjecture on M{\"o}bius disjointness \cite{Sar}, which is recently under extensive study, see e.g. \cite{FKL}. We say that a continuous flow $(T_t)$ on a compact metric space $(X,d)$ is M{\"o}bius disjoint, if for every $F\in C(X)$ and every $x\in X$ and every $t\in \R$ we have 
\begin{equation}\label{M}
\lim_{N\to+\infty}\frac{1}{N}\sum_{n\leq N}F(T_{nt}x)\mu(n)=0,
\end{equation}
here $\mu$ denotes the classical M{\"o}bius function\footnote{Sarnak's Conjecture states that every system of zero topological entropy is M{\"o}bius disjoint.}.

M{\"o}bius disjointness for horocycle flows was proved by J.~Bourgain, P.~Sarnak and T.~Ziegler~\cite{BSZ}. Moreover as explained in~\cite{FlaFo2}, it follows from Ratner's work~\cite{Rat2} that M{\"o}bius disjointness also holds for non-trivial time changes of horocycle flows. Moreover, in view of  a criterion due to Bourgain, Sarnak and Ziegler~\cite{BSZ}, for non-trivial time changes of horocycle flows the convergence in~\eqref{M} is uniform in $x\in X$. Uniform convergence is not known for horocycle flows. A corollary of Theorem \ref{thm:main2'}, again by the Bourgain-Sarnak-Ziegler criterion \cite{BSZ}, is the following:
\begin{corollary} Let $W\in \mathfrak h$ be of bounded type. For any positive function $\tau\in W^s(M)$ with $s>9/2$, if  the time change is non-trivial 
($1/\tau$ is not cohomologous to a constant  for $(\phi_t^W)$), then $(\phi_t^{W,\tau})$ is M{\"o}bius disjoint. Moreover the convergence in formula~\eqref{M} is uniform with respect to $x\in M$.
\end{corollary}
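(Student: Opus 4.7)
The plan is to derive the corollary from Theorem~\ref{thm:main2'} via the Bourgain--Sarnak--Ziegler (BSZ) criterion in its uniform form: if $(X,T)$ is a topological dynamical system, $F\in C(X)$, and for every pair of distinct primes $p\neq q$
$$
\sup_{x\in X}\left|\frac{1}{N}\sum_{n\le N}F(T^{pn}x)\overline{F(T^{qn}x)}\right|\xrightarrow[N\to\infty]{}0,
$$
then $\frac{1}{N}\sum_{n\le N}F(T^n x)\mu(n)\to 0$ uniformly in $x$. Fixing $t\in\R$, I would apply this with $T:=\phi_t^{W,\tau}$. Writing $m_\tau$ for the unique invariant probability measure of the time-changed flow on $M$, after subtracting $\int_M F\,dm_\tau$ we may assume $\int_M F\,dm_\tau=0$, so that the desired statement is exactly~\eqref{M}.

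The central step is then to show that for every pair of distinct primes $p\neq q$ the discrete correlations
$$
\frac{1}{N}\sum_{n\le N}F(\phi_{pnt}^{W,\tau}x)\,\overline{F(\phi_{qnt}^{W,\tau}x)}
$$
vanish uniformly in $x\in M$. I would view this sum as the Birkhoff average of $H(y,z):=F(y)\overline{F(z)}$ along the orbit of the diagonal point $(x,x)\in M\times M$ under the product time-$t$ map $\Phi_t:=\phi_{pt}^{W,\tau}\times\phi_{qt}^{W,\tau}$, and prove that $\Phi_t$ is uniquely ergodic with respect to $m_\tau\otimes m_\tau$. Since the non-trivial time change is mixing (by \cite{AFU}), every time-$s$ map with $s\neq 0$ is also mixing, which combined with unique ergodicity of the continuous flow forces each time-$s$ map to be uniquely ergodic with respect to $m_\tau$. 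Any $\Phi_t$-invariant probability measure on $M\times M$ therefore projects to $m_\tau$ on each factor and defines a joining of the $\Z$-actions generated by $\phi_{pt}^{W,\tau}$ and $\phi_{qt}^{W,\tau}$. Theorem~\ref{thm:main2'} rules out all non-trivial joinings at the level of continuous $\R$-flows; transferring this to the discrete level gives unique ergodicity of $\Phi_t$, and hence uniform convergence of the correlations to $|\int_M F\,dm_\tau|^2=0$, as required by BSZ.

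The main obstacle is precisely the upgrade from disjointness of the continuous $\R$-flows to disjointness of their time-$t$ maps viewed as $\Z$-actions: a priori a $\Z$-joining need not extend to a joining of the ambient $\R$-flows, so Theorem~\ref{thm:main2'} does not apply verbatim. I expect this to be manageable in our setting because the parabolic divergence estimates driving the proof of Theorem~\ref{thm:main2'} are local and pointwise and carry over to ergodic sums along $\Z$-orbits; as a fall-back one can bootstrap from the unique ergodicity of the continuous product flow $(\Phi_s)_{s\in\R}$, which is immediate from disjointness plus unique ergodicity of each factor. Once uniform correlation decay is secured, the uniform version of BSZ delivers the M\"obius disjointness of $(\phi_t^{W,\tau})$ uniformly in $x\in M$, completing the argument.
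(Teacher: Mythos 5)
Your overall route—uniform Bourgain–Sarnak–Ziegler criterion applied to the diagonal orbit under the product time-$t$ map $\phi_{pt}^{W,\tau}\times\phi_{qt}^{W,\tau}$, with unique ergodicity of that product supplying the uniform correlation decay—is the intended one, and you correctly isolate the only real obstacle: Theorem~\ref{thm:main2'} is a statement about $\R$-flows while BSZ works with the $\Z$-actions generated by the time-$t$ maps. Your fall-back (bootstrap from unique ergodicity of the continuous product flow) is the right fix, but as written it still has a gap that you should close.

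First, a citation error: you assert that every non-trivial time change is mixing, citing \cite{AFU}. That paper (and \cite{FK}) only give mixing for a dense/generic set of time changes; there is no claim that every non-trivial $\tau$ yields mixing. What holds for every non-trivial time change is \emph{mild mixing} (hence weak mixing), and this is exactly Corollary~\ref{corA} of the present paper. Replacing ``mixing'' by ``weak mixing'' is all you need, and it must come from Corollary~\ref{corA}, not from \cite{AFU}.

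Second, and more substantively, ``bootstrap from unique ergodicity of the continuous product flow'' is not automatic. Unique ergodicity of a flow $(\Psi_s)$ with invariant measure $\mu$ does \emph{not} in general imply unique ergodicity of a time-$t_0$ map (take a Kronecker flow with a resonant $t_0$). The correct argument is: if $(\Psi_s)$ is uniquely ergodic with measure $\mu$ \emph{and} $\Psi_{t_0}$ is $\mu$-ergodic, then $\Psi_{t_0}$ is uniquely ergodic (average any $\Psi_{t_0}$-invariant $\nu$ over $s\in[0,t_0]$ to produce a flow-invariant measure equal to $\mu$, then use $\Psi_{t_0}$-ergodicity of $\mu$ to force $(\Psi_s)_*\nu=\mu$ for a.e. $s$, hence $\nu=\mu$). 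To get ergodicity of $\Phi_{t_0}=\phi_{pt_0}^{W,\tau}\times\phi_{qt_0}^{W,\tau}$ for all $t_0\neq 0$, you need the product flow to be weakly mixing; this follows because each factor is weakly mixing (Corollary~\ref{corA}) and the product of weakly mixing flows is weakly mixing. Once you insert weak mixing from Corollary~\ref{corA} and spell out this flow-to-map unique ergodicity lemma, your argument is complete and matches the intended proof.
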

It follows from the work of B.~Green and T.~Tao~\cite{GT} that, if the time change is trivial  ($1/\tau$ is cohomologous to a constant   for $(\phi_t^W)$), then $(\phi_t^{W,\tau})$ is M{\"o}bius 
disjoint and that the convergence in formula~\eqref{M} is also uniform.

\medskip
The structure of the paper is as follows. In section \ref{sec.def} we recall some basic definitions of the theory of
joinings and recall the definition Heisenberg nilflows and their special flow representations over skew-shifts of the $2$-torus. In section~\ref{sec:rat} we recall the Ratner property and then formulate a version of it for  special flows.  In section~\ref{sec:disj} we state a disjointness criterion (Proposition \ref{disjoint.flows}) and then formulate a version of it for special flows (Lemma \ref{cocycle}). In section~\ref{sec:birksums} we derive from results of~\cite{FlaFo1} (see also \cite{FoSurvey}) estimates on Birkhoff sums for smooth functions over skew-shifts of the $2$-torus. Finally, in sections \ref{sec:proof} and \ref{sec:proof2} we prove our main theorems by applying the estimates from section \ref{sec:birksums}.

\section{Definitions}\label{sec.def}
\subsection{Joinings and disjointness}\label{sec.dis}

We refer the reader to \cite{Gla} for basic theory of joinings.  
Let $(\phi_t):(X,\cB,\mu)\to (X,\cB,\mu)$ and $(\psi_t):(Y,\cC,\nu)\to (X,\cB,\mu)$ be two ergodic flows. A {\it joining} of $(\phi_t)$ and $(\psi_t)$ is any $(\phi_t\times \psi_t)$ invariant measure such that $\rho(X\times B)=\mu(X)\nu(B)$ and $\rho(C\times Y)=\mu(C)\nu(Y)$. The set of joinings of $(\phi_t)$ and $(\psi_t)$ is denoted by $J((\phi_t),(\psi_t))$. Notice that $\mu\otimes \nu \in J((\phi_t),(\psi_t))$. We say that $(\phi_t)$ and $(\psi_t)$ are {\it disjoint} (denoting $(\phi_t)\perp (\psi_t)$) if $J((\phi_t),(\psi_t))=\{\mu\otimes \nu\}$.



\subsection{Heisenberg nilflows}
The (three-dimensional) Heisenberg group $\mathfrak H$ is given by 
$$
\mathfrak H:=\left\{\begin{pmatrix}1&x&z\\0&1&y\\0&0&1\end{pmatrix}\;:\; x,y,z\in \R\right\}.
$$
Let $\Gamma$ be a lattice in $\mathfrak H$. A {\em Heisenberg manifold} $M$ is a quotient $\Gamma \backslash \mathfrak H$. It is known that up to an automorphism of 
$\mathfrak H$
$$
\Gamma=\Gamma_K=\left\{\begin{pmatrix}1&m&\frac{p}{K}\\0&1&n\\0&0&1
\end{pmatrix}\;:\; m,n,p\in \Z\right\},
$$
where $K$ is a positive integer. Notice that $M$ has a (normalized) volume element $\text{ \rm vol}$  given by the projection of the (bi-invariant) Haar 
measure on $\mathfrak H$.  

Since the Abelianized Lie group $\bar{\mathfrak H}:=  \mathfrak H/[\mathfrak H,\mathfrak H]$ is isomorphic to $\R^2$ (as a Lie group) and $\bar \Gamma_K := \Gamma_K/[\Gamma_K,\Gamma_K]$ is a lattice in 
$\bar {\mathfrak H}$, the Heisenberg nilmanifold $M$ fibers over a  $2$-dimensional torus $\bar M= \bar {\mathfrak H}/ \bar \Gamma_K$, with fibers isomorphic to a circle. 

Let $W$ be any element of the Lie algebra $\mathfrak h$ of $\mathfrak H$. The {\em Heisenberg nilflow} for $W$ is given by
$$
\phi^W_t(x)=x\exp(tW)\,, \quad  \text{ for all } (x,t)\in M\times \R.
$$
Every Heisenberg nilflow $(\phi^W_t)$ on $M$ preserves the volume element $\rm vol$ on $M$.  
The classical ergodic theory of nilflows (see \cite{AGH}) implies that a Heisenberg nilflow $(\phi^W_t)$ is uniquely ergodic iff it is ergodic iff it is minimal iff the 
projected flow on $\bar M$ (which is isomorphic to its Kronecker factor) has rationally independent frequencies.  
More generally, the Diophantine properties  of a vector $W\in \mathfrak h$, and of the corresponding nilflow $(\phi^X_t)$,  under the renormalization dynamics
introduced in \cite{FlaFo1} can be entirely read from the Diophantine properties of the projection $\bar W$ of $W$ onto the Abelianized Lie algebra 
$\bar {\mathfrak h} := \mathfrak h/[\mathfrak h,\mathfrak h]$, which is also isomorphic to $\R^2$ (as a Lie algebra). In particular, a vector $W\in \mathfrak h$ is
called of {\it bounded type} if and only if is projection $\bar W \in \R^2$ is of bounded type.

\medskip
Let $\tau\in L^1(M), \tau>0$. The flow $(\phi_t^{W,\tau})$ is called a {\it time change} (or a reparametrization) of the flow $(\phi_t^W)$ if 
$$
\phi_t^{W,\tau}(x)=\phi^W_{\tau(x,t)}(x), \quad \text{ for all } (x,t) \in M\times \R\,,
$$
where the $(\phi^{W, \tau}_t)$-cocycle $\tau(x,t)$ is uniquely defined by the condition that 
$$
\int_{0}^{\tau(x,t)} 1/\tau(\phi_s^W(x))ds=t. 
$$

\subsection{Special flows}
Let $\Phi:(X, \cB, \mu,d)\to (X,\cB,\mu,d)$ be an ergodic automorphism of a compact metric probability space and let $f:X\to \R$ be strictly positive.

 We recall that the special flow $(\Phi_t):=(\Phi_t^{f})$ constructed above $\Phi$ and under $f$ acts on $X^f:=\{(x,s): x\in X, 0\leq s<f(x)\}$ by 
$$
\Phi_t(x,s)=(\Phi^{N(x,s,t)}(x),s+t-S_{N(x,s,t)}(f)(x)),
$$
where $N(x,s,t)$ is the unique integer such that 
\begin{equation}
\label{D-C}
0 \leq  s+t-   S_{N(x,s,t)}(f) (x) \leq f(\Phi^{N(x,s,t)}(x)),
\end{equation}    
and
$$
S_{N}(f) (x)=\left\{\begin{array}{ccc}
f(x)+\ldots+f(\Phi^{n-1}x) &\mbox{if} & n>0\\
0&\mbox{if}& n=0\\
-(f(\Phi^nx)+\ldots+f(\Phi^{-1}x))&\mbox{if} &n<0.\end{array}\right.$$
Notice that the flow $(\Phi_t)$ preserves the measure $\mu^f=\mu\otimes \lambda_\R$ restricted to $X^f$, where $\lambda_\R$ denotes the Lebesgue
measure on $\R$.

\subsection{Special flow representation of nilflows}\label{sec.Heis}
As shown in \cite{AFU} every {\it ergodic} nilflow $(\phi_t^W)$ can be represented as a {\it special flow}, where the base automorphism $\Phi_{\a,\beta}:\T^2\to \T^2$ is given by $\Phi_{\a,\beta}(x,y)=(x+\a,y+x+\beta)$ for $\a\in [0,1)\setminus \Q$, $\beta\in \R$ and under a constant roof function $f(x,y)=C_W>0$. 

Let $(q_n)_{n=1}^{+\infty}$ denote the sequence of denominators of $\a\in [0,1)\setminus \Q$. The vector field $W\in \mathfrak h$ is of {\it bounded type} if and only if $\a$ is of bounded type, i.e. there exists $C_\a>0$ such that $q_{n+1}\leq C_\a q_n$ for every $n\in \N$. 

For any function $f\in L^1(\T^2), f>0$, let $(\Phi_t^{f,\a, \beta})$ denote the special flow over $\Phi_{\a, \beta}$ and under $f$. Then every time change $\phi_t^{W,\tau}$ is isomorphic to a special flow $(\Phi_t^{f_{\tau},\a,\beta})$, where the roof function $f_\tau$ is as smooth as $\tau$. In view of the above representation, Theorems \ref{main:thm'} and \ref{thm:main2'} are respectively equivalent to the following two theorems:

\begin{theorem}\label{thm:main} Let $\a\in \R\setminus \Q$ be of bounded type and let $f\in W^s(\T^2)$, with $s>7/2$, be a positive function. Then the flow 
$(\Phi_t^{f,\a, \beta})$ has the Ratner property.
\end{theorem}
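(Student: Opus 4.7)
The strategy is to verify the Ratner property directly from the special flow representation, exploiting the fact that the skew-shift $\Phi_{\alpha,\beta}$ on $\T^2$ commutes with the group of vertical translations $T_\eta:(x,y)\mapsto (x,y+\eta)$: the orbit of $T_\eta(x,y)$ under $\Phi_{\alpha,\beta}$ is the $T_\eta$-translate of the orbit of $(x,y)$. At the level of the special flow $(\Phi_t^{f,\alpha,\beta})$, this means that two orbits starting at $(x,y,s)$ and $(x,y+\eta,s)$ keep their base coordinates at vertical distance exactly $\eta$, while their $s$-coordinates drift apart according to a difference of Birkhoff sums of $f$. Taking the candidate Ratner set to be a discrete family of vertical shifts, the divergence is governed, to first order in $\eta$, by
\[
S_N(f)(x,y+\eta)-S_N(f)(x,y) \;=\; \eta\, S_N(\partial_y f)(x,y) \;+\; O\!\left(\eta^2 N \|f\|_{C^2}\right),
\]
which reduces the entire problem to the asymptotics of Birkhoff sums of $\partial_y f$ over $\Phi_{\alpha,\beta}$.

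The plan has four steps. First, given two nearby points of the special flow not on the same orbit, use a short segment of the flow together with the minimality of the horizontal rotation $x\mapsto x+\alpha$ (which, since $\alpha$ is of bounded type, admits controlled return times) to reduce to the case of two points $(x,y,s)$ and $(x, y+\eta, s)$ in a common vertical fiber, with $|\eta|$ small. Second, apply the Birkhoff-sum estimates from section~\ref{sec:birksums}, derived from the renormalization results of \cite{FlaFo1}, to $\partial_y f \in W^{s-1}(\T^2)$: the hypothesis $s>7/2$ supplies enough regularity, and the bounded-type condition on $\alpha$ produces a sequence of times $N_k\asymp q_k$ along which, on a set of $(x,y)$ of measure tending to $1$, $S_{N_k}(\partial_y f)$ has a definite asymptotic. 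The non-trivial time change assumption (that $f$ is not cohomologous to a constant) forces the leading coefficient to be nonzero on this set. Third, for each small $\eta$ choose $N=N(\eta)$ from this subsequence so that $\eta\, S_N(\partial_y f)(x,y) \approx p$ for some $p$ in a prescribed discrete target set $P$ of vertical shifts. Fourth, verify that over a Ratner window $M \in [N, (1+L)N]$ the drift stays within $\varepsilon$ of $p$; after using the cocycle identity this reduces to the fluctuation estimate
\[
\bigl|S_{N+M}(\partial_y f)(x,y) - S_N(\partial_y f)(x,y)\bigr| \;\leq\; \varepsilon/|\eta|, \qquad 0\leq M \leq L N,
\]
valid on a set of measure close to $1$.

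The main obstacle is this fourth step. One needs not merely the leading-order growth of $S_N(\partial_y f)$, but a quantitative fluctuation bound over macroscopic time windows, uniform on a set whose measure can be taken arbitrarily close to $1$. The bounded-type condition on $\alpha$ is essential here: it controls the spacing of renormalization times and prevents the exceptional orbits from clustering near the relevant scales, so that the distributional obstruction coming from the renormalization cocycle can be propagated from the leading term to a fluctuation estimate. The Sobolev threshold $s > 7/2$ is exactly what makes the estimates of section~\ref{sec:birksums} effective for $\partial_y f \in W^{s-1}$, in particular allowing the summation of the tail Fourier modes against the renormalization cocycle. Once these ingredients are in place, packaging the four steps yields the Ratner property for $(\Phi_t^{f,\alpha,\beta})$ with Ratner set equal to the discrete family of vertical shifts constructed in step three.
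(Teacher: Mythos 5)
Your plan contains a genuine gap at the very first step, and it is an essential one. You propose to ``reduce to the case of two points $(x,y,s)$ and $(x,y+\eta,s)$ in a common vertical fiber'' by flowing for a short time and invoking minimality of $x\mapsto x+\alpha$. But the skew-shift $\Phi_{\a,\beta}$ translates \emph{both} points' horizontal coordinates by the same amount $\alpha$ at each step, so the horizontal distance $\vert x - x' \vert$ is a constant of motion for the base dynamics and hence for the special flow. No segment of the flow (forward or backward, short or long) can align two points with $x\neq x'$ onto the same vertical fiber. The paper never attempts such a reduction; instead its Proposition~\ref{prop:split} treats an \emph{arbitrary} nearby pair $(x,y),(x',y')$ directly and exploits the fact that, by the Taylor expansion at the skewed target $z_m=(x+m\a,\,y'+mx'+\tfrac{m(m-1)\a}{2})$, the drift at time $k$ after a shift by $m$ has leading term $\bigl(m(x-x')+(y-y')\bigr)\,S_k(\partial_y f)$. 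The quantity $m(x-x')+(y-y')$ can be made \emph{large} by choosing $m$ appropriately --- this is precisely where the horizontal displacement $x-x'$ becomes an asset rather than something to eliminate, because the skew-shift linearly stretches it in the $y$-direction. With $T=\min(\delta_x^{-2/3},\delta_y^{-2})$ and $m\asymp T$, $k\asymp T$, one gets $\vert m(x-x')+(y-y')\vert\,k^{1/2}\gtrsim 1$ in both regimes. Your vertical-only reduction would entirely miss the $\delta_x$-dominated regime ($\delta_x^{1/3}\gtrsim\delta_y$), which is the interesting one for nearby pairs in a two-dimensional base.

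A secondary but still substantive issue: your Taylor remainder is written as $O(\eta^2 N\Vert f\Vert_{C^2})$, which is the trivial pointwise bound. At the relevant scale $N\sim\eta^{-2}$ this is $O(1)$ and does not vanish, so it would obstruct the argument. The paper instead bounds the quadratic term by $\eta^2\,\vert S_N(\partial_y^2 f)\vert\lesssim \eta^2 N^{1/2}$ using Lemma~\ref{lem:upperbound} applied to $\partial_y^2 f$ (which has zero mean on $\T^2$), and at $N\sim\eta^{-2}$ this is $O(\eta)\to 0$. This improvement is not cosmetic; it is exactly what makes the bounded-type Birkhoff-sum machinery of Section~\ref{sec:birksums} close the argument. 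The remaining pieces of your outline --- using the cocycle identity for the fluctuation bound, choosing windows $L\asymp\kappa M$, and the discrete Ratner set $P=\{-1,1\}$ --- are in the right spirit and agree with the paper's Proposition~\ref{prop:specflows} / proof of Theorem~\ref{thm:main}, but they cannot be executed until the reduction step is replaced by the correct direct treatment of arbitrary displacements.
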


\begin{theorem}\label{thm:main2} Let $\a \in [0,1)\setminus \Q$ be of bounded type and let $f\in W^s(\T^2)$, with $s>9/2$, be a positive function. Then the flows $(\Phi_{pt}^{f,\a,\beta})$ and $(\Phi_{qt}^{f,\a,\beta})$ are disjoint for  all $p,q\in \N$ with $p\neq q$.
\end{theorem}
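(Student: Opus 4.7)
\textbf{Proof plan for Theorem \ref{thm:main2}.} The strategy is to invoke the disjointness criterion for special flows (Lemma \ref{cocycle}), a version of the general parabolic criterion Proposition \ref{disjoint.flows} from \cite{KLU} adapted to the special flow representation. Since $(\Phi_{pt}^{f,\alpha,\beta})$ and $(\Phi_{qt}^{f,\alpha,\beta})$ are both special flows over the common base $\Phi_{\alpha,\beta}$, with roofs $f/p$ and $f/q$ respectively, a prospective joining of the two flows disintegrates along $\Phi_{\alpha,\beta}$, and disjointness reduces to showing that no non-product coupling of the vertical fibres is invariant. The first step is therefore to translate the joining problem into a statement about cocycles, namely that the relative fibre drifts between the two flows, measured along suitable sequences of return times of $\Phi_{\alpha,\beta}$, cannot concentrate on any graph of a measurable function.

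The second step is to feed the criterion with a sequence of return times on which the Birkhoff sums $S_{q_n}(f)$ along the denominators $(q_n)$ of $\alpha$ admit the precise asymptotic expansion derived in section \ref{sec:birksums} from \cite{FlaFo1}. Since $\alpha$ is of bounded type, each $q_n\alpha$ is quantitatively close to an integer, so $\Phi_{\alpha,\beta}^{q_n}(x,y)\approx (x, y + q_n x + c_n)$ is a nearly isometric shear in the fibre direction. The Birkhoff sum $S_{q_n}(f)(x,y) - q_n\bar f$ then acquires a non-trivial oscillatory profile whose leading term depends polynomially on the fibre position; smoothness $s>9/2$ ensures the error term in this expansion is summable and negligible against the leading shear. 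Non-triviality of the time change, equivalent to $f$ not being cohomologous to a constant over $\Phi_{\alpha,\beta}$, guarantees that this leading profile does not vanish identically.

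The third step is the numerical verification that, because $p\neq q$, the drifts $\frac{1}{p}S_{q_n}(f)$ and $\frac{1}{q}S_{q_n}(f)$ cannot be matched by any time shift of the form $(t_1,t_2)$ on a set of positive measure: the factor $(1/p - 1/q)$ times the non-vanishing oscillatory profile produces, along a density-one subsequence of $(q_n)$, a relative drift that escapes every compact set in $\R$, yet does so in a position-dependent manner that rules out its absorption into the vertical $\R$-actions of either flow. Plugged back into Lemma \ref{cocycle}, this forces every ergodic joining to be the product measure.

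The main obstacle is the last step: one must upgrade the pointwise asymptotics of $S_{q_n}(f)$ from section \ref{sec:birksums} into a statement valid not at a generic point in the sense of Lebesgue measure but with respect to an arbitrary joining $\rho$, and show that the difference of scaled drifts cannot be compensated by choosing the reference time shift $t_k$ to depend on the fibre coordinate. This requires exploiting both the position-dependent (as opposed to purely translation-invariant, as for the horocycle flow) nature of the shearing and the additional smoothness margin over Theorem \ref{thm:main}, which buys enough control on sub-principal terms to isolate the non-vanishing quadratic contribution and distinguish its scalings by $1/p$ and $1/q$.
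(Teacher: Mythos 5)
Your broad framework is right: Theorem \ref{thm:main2} is indeed proved by applying Lemma \ref{cocycle} (the special-flow version of the disjointness criterion from \cite{KLU}), using the Birkhoff-sum estimates of section \ref{sec:birksums}, and the square-root oscillation of $S_n(f)$ proved for bounded-type $\alpha$ in \cite{FlaFo1}. But the key Step 3 of your outline misidentifies the mechanism by which $p\neq q$ enters, and this is precisely where the real content of the proof lies. You write that one compares $\frac{1}{p}S_{q_n}(f)$ with $\frac{1}{q}S_{q_n}(f)$ and that the factor $(1/p-1/q)$ applied to the oscillatory profile does the work. This is not what happens. The criterion in Lemma \ref{cocycle} compares the drift $S_{M'}(f/p)(z,w)-S_{M'}(f/p)(z,w')$ against the drift $S_{[\zeta M']}(f/q)(x,y)-S_{[\zeta M']}(f/q)(x',y')$ at \emph{renormalized} iteration counts, with $\zeta=q/p$ determined by the ratio of mean roof values; the linear-in-$n$ parts then cancel rather than producing a diverging $(1/p-1/q)n\bar f$. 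What survives are the $O(n^{1/2})$ fluctuations, and the point is that these scale incompatibly: $\|S_{M'}(h)\|\asymp M'^{1/2}$ while $\|S_{[\zeta M']}(h)\|\asymp\zeta^{1/2}M'^{1/2}$, so the profiles can only match if $\zeta^{1/2}=1$, i.e. $p=q$. This is exactly the content of Lemma \ref{lem:linsplit}, whose positivity constant $d_{p,q,\eta}$ is built from $q^2-p^2(1-\eta)^{-2}\zeta^{-1}$ and is positive precisely because $q>p$.

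Beyond the wrong scaling factor, your proposal also misstates the target of the criterion (the drift must be \emph{approximately equal to a fixed nonzero value} $\pm d'''$ over a long window, not ``escape every compact set''), and omits the two pieces that constitute essentially all of the work: (i) the careful case analysis in Proposition \ref{prop:cruc}, distinguishing whether $\delta_w$ dominates or is dominated by $\max(\delta_y,\delta_x^{1/3})$, and within the resonant regime further splitting into the subcases of Lemma \ref{lem:vertsplit} (case $\textbf{A}$, off-resonance in the ratio $\frac{\delta_w q^{1/2}}{\delta_y p^{1/2}}$, versus case $\textbf{B}$, the near-resonance that forces one to invoke Lemma \ref{lem:linsplit}); and (ii) the auxiliary automorphisms $A_k(x,y)=(x,y+1/k)$ used to supply the nearby pair of base points on the $X$-side, as required by the criterion, together with the shear expansion in Lemma \ref{lem:span} and the pigeonhole return-time Lemma \ref{lem:zar}. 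As written, Step 3 could not be turned into a proof: the $(1/p-1/q)$ factor acting at a fixed iteration number is inconsistent with the structure of the criterion, and without the renormalization by $\zeta$ the comparison has a linearly growing term.
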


\begin{remark} It seems to the authors that a necessary condition for the Ratner property (or any of its variants) to hold in 
$$
\cD^s(W)=\{(\phi_t^{W,\tau})\;:\; \tau\in W^s(M), \tau>0\}.
$$
is that $W$ is of bounded type. 
\end{remark}

\section{Ratner's property}\label{sec:rat}
Let $(\phi_t):(X,\cB,\mu,d)\to (X,\cB,\mu,d)$ be an ergodic flow on a $\sigma$-compact metric probability space. 
\begin{definition}\label{def.rat} Let $P=\{-1,1\}$ and let $t_0\in \R$. The flow $(\phi_t)$ has  the {\it $R(t_0,P)$-property} if for every $\epsilon>0$ and $N\in \N$,  there exist 
$\kappa =\kappa(\epsilon)$, $\delta =\delta(\epsilon,N)$ and a set $Z = Z(\epsilon,N)$ with $\mu(Z)>1-\epsilon$,
such that:
for every $x,y\in Z$ with $d(x,y) <\delta$ and $x$ not in the orbit of $y$, there exist $p = p(x,y)\in P$ and $M = M(x,y), L = L(x,y)\geq N$, $\frac{L}{M}\geq \kappa$, for which
$$
\# \{n\in [M,M+L]\cap \Z\;:\; d(\phi_{nt_0}(x), \phi_{nt_0+p}(y))<\epsilon\}\geq (1-\epsilon)L.
$$
The flow $(\phi_t)$ is said to have {\it Ratner's property} if 
$$
\{s\in \R\;:\; (\phi_t)\text{ has property } R(s,P)\}
$$
is uncountable.
\end{definition}  

\subsection{Ratner's property for special flows}
In what follows $\Phi:(X,\cB,\mu,d)\to (X,\cB,\mu,d)$ is an ergodic automorphism of a $\sigma$-compact metric probability space and $f\in L_1^{+}(X)$. We have the following proposition (see Proposition 4.1. in~\cite{KK}):
\begin{proposition}\label{prop:specflows} Let $P=\{-1,1\}$. If for every $\epsilon>0$ and $N\in \N$ there exist $\kappa=\kappa(\epsilon)>0$, $\delta=\delta(\epsilon,N)>0$ and a set $Z=Z(\epsilon,N)\subset X$ with $\mu(Z)\geq 1-\epsilon$, such that, for every $x,y\in Z$ with  $d(x,y)\leq \delta$, there exist $M,L\geq N$, $\frac{L}{M}\geq \kappa$ and $p\in P$ such that for every $n\in [M,M+L]\cap \Z$
$$
d(\Phi^nx,\Phi^ny)< \epsilon\quad \text{ and }\quad |S_{n}(f) (x)-S_{n}(f)(y)-p|<\epsilon,
$$
then the special flow $(\Phi_t^f)$ satisfies the Ratner property.
\end{proposition}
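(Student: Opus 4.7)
The plan is to lift the hypothesis, which controls the base map $\Phi$ at integer times, to the real-time special flow $(\Phi_t^f)$ on $X^f$ at an arbitrary sampling period $t_0>0$. First I would thicken the hypothesis set $Z\subset X$ to a set $Z^\ast\subset X^f$: by Lusin and inner regularity, pick a compact $K\subset X$ on which $f$ is continuous with $c\leq f\leq C$, and define
\[
Z^\ast:=\{(x,s)\in X^f\,:\,x\in Z\cap K,\ \eta\leq s\leq f(x)-\eta\},
\]
which has $\mu^f$-measure at least $1-O(\epsilon)$ for $\eta$ sufficiently small. Equip $X^f$ with the usual product-type metric, so that $d_{X^f}((x,s),(y,s'))\leq\delta^\ast$ forces $d_X(x,y)\leq\delta$ and $|s-s'|\leq\epsilon$.

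For such $(x,s),(y,s')\in Z^\ast$, not on the same $(\Phi_t^f)$-orbit, the hypothesis applied to $x,y\in Z$ yields $p\in P$ and integers $M,L\geq N$ with $L/M\geq\kappa$ such that $d(\Phi^nx,\Phi^ny)<\epsilon$ and $|S_n(f)(x)-S_n(f)(y)-p|<\epsilon$ for every $n\in[M,M+L]$. Fix $t_0>0$ and, for each integer $m$, let $N(m)$ be the unique integer with $S_{N(m)}(f)(x)\leq s+mt_0<S_{N(m)+1}(f)(x)$. The condition $N(m)\in[M,M+L]$ defines a single integer interval $[M',M'+L']$ with $L'\approx L\,\bar f/t_0$ and a ratio $L'/M'\geq\kappa'=\kappa'(\kappa,t_0,c,C)>0$. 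Set $p':=-p\in P$; the goal is to prove that for all but $O(\epsilon L')$ integers $m\in[M',M'+L']$ one has $d_{X^f}(\Phi_{mt_0}(x,s),\Phi_{mt_0+p'}(y,s'))=O(\epsilon)$.

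For a ``good'' $m$, the analogous integer $N'(m)$ satisfying $S_{N'(m)}(f)(y)\leq s'+mt_0+p'<S_{N'(m)+1}(f)(y)$ coincides with $N(m)$, because the hypothesis, together with $p+p'=0$ and $|s-s'|\leq\epsilon$, aligns the two roof partitions $\{S_n(f)(x)\}$ and $\{S_n(f)(y)-p\}$ up to error $O(\epsilon)$. Under $N'(m)=N(m)$ the definition of the special flow gives $\Phi_{mt_0}(x,s)=(\Phi^{N(m)}x,\sigma_m)$ and $\Phi_{mt_0+p'}(y,s')=(\Phi^{N(m)}y,\sigma'_m)$ with
\[
\sigma_m-\sigma'_m=(s-s')-(p+p')+O(\epsilon)=O(\epsilon),
\]
so combined with the hypothesis bound $d(\Phi^{N(m)}x,\Phi^{N(m)}y)<\epsilon$ the two iterates are $O(\epsilon)$-close in $X^f$.

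The \emph{main obstacle} is controlling the ``bad'' $m$'s, namely those for which $s+mt_0$ falls within $O(\epsilon)$ of some endpoint $S_n(f)(x)$: for these, $N'(m)$ may jump by $\pm 1$, the fibre coordinates of the two iterates then land in \emph{different} roof cells, and the points are not close. Using the uniform lower bound $f\geq c$ on $K$, the number of such $m$ is $O(\epsilon L/(c\,t_0))=O(\epsilon L')$, uniformly in $M,L,t_0$, giving a bad fraction of $O(\epsilon)$. Since the whole argument works for every $t_0>0$ with no Diophantine restriction, one obtains property $R(t_0,P)$ for uncountably many $t_0$, hence Ratner's property for $(\Phi_t^f)$.
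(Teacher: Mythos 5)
The overall scheme is the right one---lift the base estimates to the flow and control the iterates where the sample time falls close to a roof crossing---but the resolution of what you yourself flag as the \emph{main obstacle} contains a gap, and it is precisely the gap that the standard proof (in \cite{KK}, mirrored in this paper's Lemma~\ref{cocycle} via~\eqref{eq:bet1}--\eqref{eq:nxt2}) closes with the Birkhoff ergodic theorem.

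Your estimate ``the number of bad $m$ is $O(\epsilon L/(c\,t_0))=O(\epsilon L')$, uniformly in $M,L,t_0$'' is not correct for all $t_0$. In the window of $m$'s for which $N(m)\in[M,M+L]$, there are on the order of $L$ roof crossings, and around each crossing there is a bad interval of flow-time length $O(\epsilon')$ (where $\epsilon'$ is the accuracy in the base hypothesis). When $t_0$ is larger than these interval widths, each bad interval can capture one sample, so the deterministic worst-case count of bad $m$'s is $\approx L$. Meanwhile $L'\approx L\bar f/t_0$, so the bad fraction can be as large as $t_0/\bar f$, which is \emph{not} $O(\epsilon)$ once $t_0\gtrsim\epsilon$. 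Since $R(t_0,P)$ must hold for a fixed $t_0$ and \emph{every} $\epsilon>0$, this crude counting cannot work: for any given $t_0>0$ your bound fails as soon as $\epsilon\ll t_0$. There is no way to rescue this by shrinking $\epsilon'$ either, since the issue is the number of connected components of the bad set ($\approx L$), not their total length. Moreover, your thickening $Z^\ast$ only constrains the fiber coordinate of the \emph{initial} point $(x,s)$, and gives no control on the fiber coordinates of $\Phi^f_{mt_0}(x,s)$ for later $m$.

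The fix is to replace the deterministic count with a statistical one. Let $X^f_\eta:=\{(x,s):\eta<s<f(x)-\eta\}$, with $\eta$ chosen so $\mu^f(X^f_\eta)\geq 1-\epsilon/10$. By the Birkhoff ergodic theorem (plus Egorov) for the time-$t_0$ map of the flow, there is a set of $\mu^f$-measure $\geq 1-\epsilon/10$ and an $N_0$ so that for every $(x,s)$ in it and every $M',L'\geq N_0$ with $L'/M'\geq\kappa$,
\[
\#\bigl\{m\in[M',M'+L']\cap\Z:\Phi^f_{mt_0}(x,s)\in X^f_\eta\bigr\}\geq(1-\epsilon)L'.
\]
Intersecting this Egorov set with your $Z^\ast$ gives the set $Z$ demanded by Definition~\ref{def.rat}. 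For the ``good'' $m$'s so produced, the fiber coordinate of $\Phi^f_{mt_0}(x,s)$ is $\geq\eta$ away from $0$ and $f(\cdot)$, and then (provided the base hypothesis is invoked with $\epsilon'\ll\eta$, and provided $f$ is taken uniformly continuous on a large compact, so the roof values at nearby base points agree up to $o(\eta)$) the matching $N'(m)=N(m)$ that you derived does go through, giving $d_{X^f}(\Phi^f_{mt_0}(x,s),\Phi^f_{mt_0+p'}(y,s'))=O(\epsilon)$. This yields the bad fraction $<\epsilon$ for \emph{every} fixed $t_0>0$, with no restriction relating $t_0$ and $\epsilon$, which is what the proposition requires. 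Your argument for the alignment $N'(m)=N(m)$ and the sign choice $p'=-p$ is correct; the missing piece is that the bound on the bad set must come from equidistribution of the special-flow orbit, not from counting roof crossings.
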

In \cite{KK} Proposition~\ref{prop:specflows}  was proved for the SR-property, which is a modification of Ratner's property in which one allows for divergence either in the future or in the past. However the proof in \cite{KK} immediately extends  to a proof of Proposition \ref{prop:specflows}.

We will use Proposition \ref{prop:specflows} to prove Theorem \ref{thm:main}.

\section{Disjointness criterion}\label{sec:disj}
Let $(\phi_t):(X,\cB,\mu,d_1)\to (X,\cB,\mu,d_1)$ and $(\psi_t):(Y,\cC,\nu,d_2)\to (Y,\cC,\nu,d_2)$ be two weakly mixing flows (and $X,Y$ are $\sigma$-compact). In this section we prove the following proposition:

\begin{proposition}[Disjointness criterion]\label{disjoint.flows} 
Let $P'\subset \R$ be a compact set and let $v'\neq 0$. Fix $1\geq c>0$. Assume there exists $(A_k)\subset Aut(X_k,\cB|_{X_k},\mu|_{X_k})$, such that $\mu(X_k)\to \mu(X)$,
$A_k\to Id$ uniformly.
Assume moreover that for every $\epsilon>0$ and $N\in \N$  
there exist $(E_k=E_k(\epsilon))\subset \cB$ with $\mu(E_k)\geq c\mu(X)$, $0<\kappa=\kappa(\epsilon)<\epsilon$, $\delta=\delta(\epsilon,N)>0$, a set $Z=Z(\epsilon,N)\subset Y$ with  $\nu(Z)\geq (1-\epsilon)\nu(Y)$, such that for all $y,y'\in Z$ satisfying $d_2(y,y')<\delta$, every $k$ such that $d_1(A_k,Id)<\delta$ and every $x\in E_k$, $x':=A_{k}x$ there are $M\geq N$, $L\geq 1$, $\frac{L}{M}\geq \kappa$ and $V\in P'$, $v\in\{-v',v'\}$ ,
for which the following holds:
\begin{multline}\label{forw}
\max(d_1(\phi_tx,\phi_{t+V+v}x'),\;d_2(\psi_ty,\psi_{t+V}y'))<\epsilon\\ \text{ for } t\in U\subset [M,M+L]
\text{ with } \lambda_\R(U)\geq (1-\epsilon)L.
\end{multline}
Then $(\phi_t)$ and $(\psi_t)$ are disjoint.
\end{proposition}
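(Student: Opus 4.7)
The plan is to suppose for contradiction that $\rho \in J((\phi_t),(\psi_t))$ is an ergodic joining with $\rho \neq \mu \otimes \nu$ and to extract from the hypothesis the invariance of $\rho$ under $\phi_{v'} \times \id$. Since $(\phi_t)$ is weakly mixing, its spectrum is continuous away from $0$, so the time-$v'$ map $\phi_{v'}$ is ergodic for every $v' \neq 0$. Invariance of $\rho$ under $\phi_{v'} \times \id$ then translates, via the Markov operator $P \colon L^2(Y,\nu) \to L^2(X,\mu)$ of $\rho$, into $\phi_{v'}^{\ast}P = P$; each $Pf$ is thus $\phi_{v'}$-invariant, hence $\mu$-a.e.\ constant, which forces $\rho = \mu \otimes \nu$ and yields the desired contradiction.

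To produce the invariance, I would first apply the Birkhoff ergodic theorem to a countable dense family of bounded Lipschitz functions on $X \times Y$, combined with Egorov's theorem, to fix a set $\tilde T$ with $\rho(\tilde T) > 1-\epsilon_0$ on which $L^{-1}\int_M^{M+L} F(\phi_t x, \psi_t y)\,dt \to \int F\,d\rho$ uniformly in $(x,y) \in \tilde T$ as $M \to \infty$ with $L \geq \kappa M$. Since $\mu(E_k) \geq c\mu(X)$ for all $k$, Fatou's lemma yields $\mu(\limsup_k E_k) \geq c\mu(X)$, so a $\rho$-positive subset of $\tilde T$ consists of pairs $(x,y)$ with $y \in Z$ and $x \in E_k$ for infinitely many $k$. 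For such $(x,y)$, choose $k$ large enough that $x \in E_k$ and $d_1(A_k,\id) < \delta(\epsilon,N)$, and pick a companion $y' \in Z \cap B_\delta(y)$. The hypothesis then provides $M \geq N$, $L \geq \kappa M$, $V \in P'$, $v \in \{-v', v'\}$, and $U \subset [M,M+L]$ of density $\geq 1-\epsilon$ on which both divergence estimates hold. Setting $F_v := F \circ (\phi_v \times \id)$ and substituting $s = t + V$ in the second integral, the estimates convert into the key identity
\[
\frac{1}{L}\int_M^{M+L} F(\phi_t x, \psi_t y)\,dt = \frac{1}{L}\int_M^{M+L} F_v\bigl(\phi_s A_k x, \psi_s y'\bigr)\,ds + O\!\bigl(\omega_F(\epsilon) + \epsilon\|F\|_\infty + |V|/L\bigr).
\]

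Next I would pass to the limit. As $N \to \infty$, the left-hand side tends to $\int F\,d\rho$ by the uniform convergence on $\tilde T$. For the right-hand side, compactness of $P' \times \{\pm v'\}$ together with a pigeonhole over $y'$ lets one fix $(V,v)$ on a $\nu$-positive subset of $Z \cap B_\delta(y)$. The empirical measures $L^{-1}\int_M^{M+L}\delta_{(\phi_s A_k x, \psi_s y')}\,ds$ are tight, and any weak-$\ast$ accumulation $\sigma$ is $(\phi_t \times \psi_t)$-invariant. Since $A_k$ preserves $\mu$ and $A_k \to \id$ uniformly, $\mu$-a.e.\ starting point $A_k x$ is $\phi_t$-generic for $\mu$; and $y'$ may be chosen $\psi_t$-generic inside $Z$ since $\nu(Z)$ is close to $1$. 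Thus $\sigma$ has marginals $\mu$ and $\nu$, i.e.\ is itself a joining. The identity then reads $\int F\,d\rho = \int F_v\,d\sigma$ for all test $F$, that is, $\rho = (\phi_v \times \id)_{\ast}\sigma$. Running the argument in both pigeonhole cases $v = +v'$ and $v = -v'$, and using that both accumulation measures are determined by the same orbit of $(A_k x, y')$, yields $(\phi_{-v'} \times \id)_{\ast}\rho = (\phi_{v'} \times \id)_{\ast}\rho$, i.e.\ $\rho$ is $(\phi_{2v'} \times \id)$-invariant, which feeds back into the first paragraph (with $v'$ replaced by $2v'$) to close the argument.

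The principal obstacle is the typicality transfer in the third paragraph: the companion point $(A_k x, y')$ is not a priori $\rho$-typical, and controlling its Birkhoff-type average only through weak-$\ast$ accumulations forces a delicate argument to show that the accumulations arising from the two sign cases $v = \pm v'$ actually coincide, rather than each merely giving some joining. This requires a careful diagonal synchronization of $k \to \infty$ and $M \to \infty$, together with exploiting the freedom to vary $y'$ within a $\nu$-rich subset of $Z \cap B_\delta(y)$ so that both applications of the hypothesis can be made against the \emph{same} base orbit $(A_k x, y')$. This is essentially the content of the parabolic-disjointness technique of \cite{KLU}, here adapted to two distinct flows.
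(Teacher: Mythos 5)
Your proposal diverges from the paper's proof in two material ways, and the gap you flag at the end is exactly where it breaks.

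First, on strategy: the paper does not attempt to prove that $\rho$ is invariant under $\phi_{v}\times\id$. Instead it uses weak mixing to pick, for each $v\in\{-v',v'\}$, sets $B_v, C_v$ witnessing the failure of that invariance, i.e.\
$|\rho(\phi_{-v}B_v\times C_v)-\rho(B_v\times C_v)|>\eta$. The argument then produces both an upper and a lower bound on $\rho(\phi_{-v}B_v\times C_v)$ in terms of $\rho(B_v\times C_v)\pm\eta/2$, a direct contradiction. This is logically lighter than establishing invariance (which would require passing a full family of test functions, not just one inequality), and it only needs one successfully coupled pair of points, not a.e.\ behaviour.

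Second, and more fundamentally, your plan has no mechanism to guarantee that the companion point $(x',y')=(A_k x,\,y')$ is $\rho$-typical, and you say so yourself. Hoping to close this gap by weak-$\ast$ accumulation is the wrong move: the limit $\sigma$ is merely \emph{some} joining, and the identity $\rho=(\phi_v\times\id)_\ast\sigma$ gives nothing unless $\sigma$ is tied to $\rho$. The attempt to play the two signs $v=\pm v'$ against each other also fails, because the hypothesis gives, for each admissible quadruple $(x,x',y,y')$, a \emph{single} sign $v$; forcing both signs requires different companions $y'$ (or different $k$'s), hence different base orbits and different accumulations $\sigma_1, \sigma_2$, with no reason to coincide. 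The paper resolves this upstream, before any limits are taken, by constructing a companion $y'$ that is simultaneously close to $y$ and $\rho$-typical together with $x'$. Concretely: after intersecting the Birkhoff-good set with $X\times Z$ and with $X_{\rm cont}\times Y$ (where $X_{\rm cont}$ comes from Luzin's theorem), it applies Kunugui's measurable selection theorem to obtain a map $s_Y$ with $(x,s_Y(x))$ in the good set for $x\in X_{\rm cont}$, and $s_Y$ uniformly continuous there. Choosing $x\in A_{-k}(U_X\cap X_k)\cap(U_X\cap X_k)\cap E_k$ and setting $x'=A_kx$, $y=s_Y(x)$, $y'=s_Y(x')$, one gets \emph{both} $(x,y)$ and $(x',y')$ in the good set, plus $d_2(y,y')<\delta$ by uniform continuity of $s_Y$ and $A_k\to\id$. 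With both endpoints Birkhoff-typical, no compactness-of-measures argument is needed at all: the substitution $s=t+V$ in the time average directly converts one $\rho$-average into the other up to $O(\epsilon)$ error, yielding the contradiction. This selection step is the idea your outline is missing; without it your argument stops at ``$\rho=(\phi_v\times\id)_\ast\sigma$ for some joining $\sigma$,'' which is vacuous.
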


The proof of the above proposition follows similar lines to the proof of Theorem~3 in \cite{KLU}.
We provide a proof for completeness.
\begin{proof}
Let $\rho\in J((\phi_t)_{t\in\R},(\psi_t)_{t\in \R})$ be an ergodic joining with $\rho\neq \mu\times \nu$. Since $(\phi_t)$ is weakly mixing it follows that, for $v\in\{-v',v'\}$, the map $\phi_v$ is ergodic and hence disjoint from $Id$. Therefore there exist $B_v\in \cB$ and $C_v\in \cC$ such that 

\begin{equation}\label{iddisT}|\rho(\phi_{-v}(B_v)\times C_v)-\rho(B_v\times C_v)|>\eta
\end{equation}
for some $0<\eta<1$. 
Let  $V^1_{\epsilon}(B_v):=\{x\in X\;:\; d_1(x,B_v)<\epsilon\}$ and similarly $V^2_{\epsilon}(C_v):=\{y\in Y\;:\; d_2(y,C_v)<\epsilon\}$.
There exists $0<\epsilon<\frac{c\eta}{1000}$ such that $$\max\left(\left|\mu(V^1_{\epsilon}(B_v))-\mu(B_v)\right|,
\left|\nu(V^2_{\epsilon}(C_v))-\nu(C_v)\right|\right) <\eta/32.$$ Since $\rho$ is a joining, by the triangle inequality, for each $t\in\R$, we have
\begin{equation}\label{pertu}
|\rho(\phi_{-t}V^1_{\epsilon}(B_v)\times V^2_\epsilon(C_v))-\rho(\phi_{-t}B_v\times C_v)|<\frac{\eta}{16}.
\end{equation}

By applying Birkhoff point-wise ergodic theorem to the joining flow $(\phi_t\times \psi_t, \rho)$ and to the characteristic functions of the sets 
$\phi_{-v}B_v\times C_v$ and $\phi_{-v}V^1_{\epsilon}(B_v)\times V^2_{\epsilon}(C_v)$
for $v\in \{-v',v'\}$, it follows that there exist $N_0\in \N$, $\kappa>0$ and a set $U_1\in \mathcal{B}\otimes \mathcal{C}$, with $\rho(U_1)>(1-\frac{c}{100})\rho(X\times Y)$,  
such that, for every $L,M\geq N_0$ with $\frac{L}{M}\geq \kappa$ and $v\in \{-v',v'\}\cup\{0\}$,  and for all  $(x,y)\in U_1$, we have
\begin{equation}\label{eeq1}\left|\frac{1}{L}\int_{M}^{M+L}\chi_{\phi_{-v}B_v
\times C_v}(\phi_tx,\psi_ty)\,dt-\rho(\phi_{-v}B_v\times C_v)\right|<\frac{\eta}{16},
\end{equation}
\begin{equation}\label{eeq3}\left|\frac{1}{L}\int_{M}^{M+L}
\chi_{\phi_{-v}V^1_{\epsilon}(B_v)
\times V^2_{\epsilon}(C_v)}(\phi_tx,\psi_ty)\,dt-\rho(\phi_{-v}V^1_{\epsilon}(B_v)\times V^2_\epsilon(C_v))\right|<\frac{\eta}{16}\,.
\end{equation}

Let $U_2:=U_1\cap (X\times Z)$, where $Z=Z(\epsilon,N_0)$ comes from our assumptions. Then $\rho(U_2)>(1-c/50)\rho(X\times Y)$.
 Note also that since $X\times Y$ is $\sigma$-compact, the measure $\rho$ is regular, and hence we can additionally assume that $U_2$ is compact.
Define ${\rm proj}:X\times Y\to X$, ${\rm proj}(x,y)=x$. Then the fibers of ${\rm proj}$ are $\sigma$-compact, and since $U_2$ is compact, the fibers of the map ${\rm proj}|_{U_2}:U_2\to {\rm proj}(U_2)\subset X$ are also $\sigma$-compact and ${\rm proj}(U_2)$ is also compact. Thus, by Kunugui's selection  theorem (see e.g.\ \cite{Ga-Le-Sch}, Thm. 4.1), it follows that there exists  a measurable (selection) $s_Y :{\rm proj}(U_2)\to X\times Y$ such that $(x,s_Y(x))\in U_2$. Note that $\mu({\rm proj}(U_2))\geq \rho(U_2)>(1-c/50)\mu(X)$. By Luzin's theorem there exists $X_{\rm cont}\subset {\rm proj}(U_2)$, with $\mu(X_{\rm cont})\geq (1-c/50)\mu(X)$, such that $s_Y$ is uniformly continuous on $X_{\rm cont}$. Finally, we set
$$
\widetilde{U}:=U_2\cap (X_{\rm cont}\times Y).
$$
We have $\rho(\widetilde{U})>(1-c/10)\rho(X\times Y)$. Moreover, for the set $U_X:={\rm proj}(\widetilde{U})$ we have
$\mu(U_X)\geq \rho(\widetilde{U})>(1-c/10)\rho(X\times Y)$.
Hence, by the definitions of sequences $(A_k)$ and $(E_k)=(E_k(\epsilon))$, it follows that there exists $k_0=k_0(\epsilon)$ such that for $k\geq k_0$, 
\begin{equation}\label{kurr}
\mu(A_{-k}(U_X\cap X_k)\cap (U_X\cap X_k)\cap E_k)>0.
\end{equation}
Let $\delta=\delta(\epsilon,N_0)$ come from the assumptions of our theorem. By the uniform continuity of $s_Y:X_{\rm cont}\to Y$ it follows that there exists $0<\delta'<\delta$ such that $d_1(x_1,x_2)<\delta'$ implies $d_2(s_Y(x_1),s_Y(x_2))<\delta$ for each $x_1,x_2\in X_{\rm cont}$.
Since $A_k\to Id$ uniformly
and $\widetilde{U}\subset X_{\rm cont}\times Y$, there exists $k_1=k_1(\epsilon)$ such that for $k\geq k_1$, $d_2(s_Y(x),s_Y(A_kx))<\delta$  for $x\in X_k\cap X_{\rm cont}$. Fix $k\geq \max(k_0,k_1+1)$ (so that $d_1(A_k,Id)<\delta'$). Let $x\in A_{-k}(U_X\cap X_k)\cap (U_X\cap X_k)\cap E_k$. Such a point does exist in view of~\eqref{kurr}. Set $x'=A_kx$, $y=s_Y(x)$, $y'=s_Y(x')$. By definition, $(x,y),(x',y')\in \widetilde{U}$ and $d_2(y,y')<\delta$ and all other  assumptions of our theorem are satisfied for $(x,y)$, $(x',y')$ (so that we obtain $M,L,V,v$ depending on $(x,y)$ and $(x',y')$ satisfying~\eqref{forw}).

We claim that
\begin{equation}\label{disjon}
\rho(\phi_{-v}(B_v)
\times C_v)>\rho(B_v\times C_v)-\frac{\eta}{2}.
\end{equation}
Indeed, in view of~\eqref{pertu}, the estimate~\eqref{disjon} follows if we can prove that
$$
\rho(\phi_{-v}(V^1_{\epsilon}(B_v))
\times V^2_{\epsilon}(C_v))>\rho(B_v\times C_v)-\frac{\eta}{4}.
$$

Using~\eqref{forw} (for $v=v'$),~\eqref{eeq1} (for $v=0$) and $(x,y)\in \widetilde{U}\subset U_1$, we obtain  that
\begin{multline}
\frac{1}{L}\int_{M}^{M+L}\chi_{V^1_{\epsilon}(B_v)
\times V^2_{\epsilon}(C_v)}(\phi_{t+V+v}x',\psi_{t+V}y')\,dt \\ > \frac{1}{L}\int_{M}^{M+L}\chi_{B_v
\times C_v}(\phi_tx,\psi_ty)\,dt-\epsilon >
\rho(B_v\times C_v)-\epsilon-\frac{\eta}{16}.
\end{multline}
Hence to complete the proof of claim~\eqref{disjon}, it is enough to show that
\begin{multline}\label{eq:suf}
\frac{1}{L}\int_{M}^{M+L}\chi_{V^1_{\epsilon}(B_v)
\times
V^2_\epsilon(C_v)}(\phi_{t+V+v}x,\psi_{t+V}y)\,dt<
\rho(\phi_{-v}V^1_{\epsilon}(B_v)
\times V^2_\epsilon(C_v))+\frac{\eta}{8}.
\end{multline}
Notice however that 
\begin{multline}
\frac{1}{L}\int_{M}^{M+L}\chi_{V^1_{\epsilon}(B_v)
\times
V^2_\epsilon(C_v)}(\phi_{t+V+v}x,\psi_{t+V}y)\,dt \\ =\frac{1}{L}\int_{M+V}^{M+V+L}\chi_{\phi_{-v}(V^1_{\epsilon}(B_v))
\times
V^2_\epsilon(C_v)}(\phi_{t}x,\psi_{t}y)\,dt\,,
\end{multline}
hence the estimate in~\eqref{eq:suf} follows from~\eqref{eeq3} with $M=M+V$ and $L=L$.

By a similar reasoning, we get
\begin{equation}\label{rhot}\rho(\phi_{-v}(B_v)\times C_v)<\rho(B_v\times C_v)+\frac{\eta}{2},
\end{equation}
so putting together~\eqref{disjon} and\eqref{rhot} we derive the estimate  $$|\rho(\phi_{-v}(B_v)\times C_v)-\rho(B_v\times C_v)|<\frac{\eta}{2}\,.$$ 
This however contradicts~\eqref{iddisT}, hence the argument is complete.
\end{proof}

\subsection{Disjointness criterion for special flows}
In this section we assume that $(\Phi_t)=(\Phi_t^f)$ and $(\Psi_t)=(\Psi_t^g)$ are special flows over  ergodic $\Phi\in Aut(X,\cB,\mu,d_1)$, $\Psi\in Aut(Y,\cC,\nu,d_2)$ respectively and under $f\in L^1_+(X,\cB,\mu)$, $g\in L^1_+(Y,\cC,\mu)$. Let $(\Phi_t^f)$ act on $X^f$ with metric $d_1^f$ and $(\Psi_t^g)$ act on $X^g$ with metric $d_2^g$. For $(x,s)\in X^f$ and $t\in \R$ we denote by $n(x,s,t)\in \Z$ the unique number for which

$$
S_{n(x,s,t)}(f)(x)\leq t+s<S_{n(x,s,t)+1}(f)(x),
$$
i.e.
\be\label{do1}
\Phi^f_{t}(x,s)=(\Phi^{n(x,s,t)}x,s+t-S_{n(x,s,t)}(f)(x)).\ee
We define $m(y,r,t)$  analogously for $(y,r)\in Y^g$.
We tacitly assume that
\be\label{do2}
\mbox{$f$ and $g$ are bounded away from zero.}
\ee
Before we state a disjointness criterion for special flows, we need the following general lemma:

\begin{lemma}\label{lem:bew} Fix $\frac{\inf_X f}{10}>\epsilon>0$. Let $t\in \R$ and $(x,s),(x',s')\in X^f$ be such that $|s-s'|<\epsilon^2$,  $d_1(\Phi^{n(x,s,t)}x,\Phi^{n(x,s,t)}x')<\epsilon^2$ and 
$$
\Phi_t^f(x,s)\in \{(x,s): \epsilon<s<f(x)-\epsilon\}.
$$
Let $V(t)=V(x,s,x',s', t):=S_{n(x,s,t)}(f)(x)-S_{n(x,s,t)}(f)(x')$. Then 
$$
d_1^f(\Phi_t^f(x,s),\Phi_{t-V(t)}^f(x',s'))\leq 2\epsilon^2.
$$
\end{lemma}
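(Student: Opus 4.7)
The plan is to compute both sides of the claimed inequality directly from the definition~\eqref{do1} of the special flow, after first pinning down the number of iterations along the second orbit.

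First I would set $n:=n(x,s,t)$, so that by~\eqref{do1}
$$
\Phi_t^f(x,s)=(\Phi^n x,h),\qquad h:=s+t-S_n(f)(x).
$$
The assumption that $\Phi_t^f(x,s)$ lies in the strip $\{\epsilon<r<f(\cdot)-\epsilon\}$ immediately translates into the two-sided bound
$$
\epsilon<h<f(\Phi^n x)-\epsilon,
$$
which provides the slack needed to tolerate small perturbations of the height coordinate.

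The main step is to show that $n(x',s',t-V(t))=n$ as well. Substituting the candidate $n$ and using the definition $V(t)=S_n(f)(x)-S_n(f)(x')$ one gets
$$
s'+(t-V(t))-S_n(f)(x')=s'+t-S_n(f)(x)=h+(s'-s).
$$
Since $|s'-s|<\epsilon^2$, this number lies in $(h-\epsilon^2,h+\epsilon^2)\subset(\epsilon-\epsilon^2,f(\Phi^n x)-\epsilon+\epsilon^2)$. The lower bound is positive because $\epsilon<\inf f/10<1$ forces $\epsilon-\epsilon^2>0$, while the upper bound, together with the estimate $|f(\Phi^n x)-f(\Phi^n x')|$ being small (which follows from $d_1(\Phi^n x,\Phi^n x')<\epsilon^2$ and the continuity of $f$ inherited from the smooth setting in which the lemma is applied), yields $h+(s'-s)<f(\Phi^n x')$. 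Hence the D--C condition~\eqref{D-C} is satisfied with $n'=n$, and applying~\eqref{do1} a second time gives
$$
\Phi_{t-V(t)}^f(x',s')=\bigl(\Phi^n x',\,h+(s'-s)\bigr).
$$

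Finally, the two points $(\Phi^n x,h)$ and $(\Phi^n x',h+(s'-s))$ share a common vertical coordinate up to $|s-s'|$ and share base fibers at $d_1$-distance less than $\epsilon^2$, so the natural product-style metric $d_1^f$ on $X^f$ bounds the total distance by $d_1(\Phi^n x,\Phi^n x')+|s-s'|<\epsilon^2+\epsilon^2=2\epsilon^2$, which is the required inequality. The only real obstacle is the identification $n(x',s',t-V(t))=n$; once the upper-endpoint inequality is verified using the continuity of $f$ at scale $\epsilon^2$, the rest is a one-line unwinding of~\eqref{do1}.
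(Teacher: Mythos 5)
Your proof follows essentially the same route as the paper's: identify $n(x',s',t-V(t)) = n(x,s,t)$ by verifying the D--C condition \eqref{D-C}, then read off $\Phi_{t-V(t)}^f(x',s')$ from \eqref{do1} and compare coordinates under the product metric. The one spot worth highlighting is your upper-endpoint estimate: you correctly observe that $h+(s'-s)<f(\Phi^n x')$ does not follow from the strip condition alone (which only gives $h<f(\Phi^n x)-\epsilon$), but additionally requires bounding $|f(\Phi^n x)-f(\Phi^n x')|$ via $d_1(\Phi^n x,\Phi^n x')<\epsilon^2$ and a modulus of continuity for $f$. The paper's proof passes over this step in silence, and strictly speaking the stated hypothesis $f\in L^1_+$ does not guarantee it; in every application $f$ is smooth, so this is a harmless implicit assumption, but you were right to flag it.
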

\begin{proof}Notice that since $\Phi_t^f(x,s)\in \{(x,s): \epsilon<s<f(x)-\epsilon\}$ and $|s-s'|<\epsilon^2$, we have
$$
S_{n(x,s,t)}(f)(x')\leq t-V(t)+s' \leq S_{n(x,s,t)+1}(f)(x').
$$
Therefore $\Phi_{t-V(t)}^f(x',s')=(\Phi^{n(x,s,t)}x',t-V(t)+s'-S_{n(x,s,t)}(f)(x'))$. By definition $\Phi_{t}^f(x,s)=(\Phi^{n(x,s,t)}x,t+s-S_{n(x,s,t)}(f)(x))$. The statement follows by the definition of $V(t)$ since $d_1^f$ is the product metric and we have $d_1(\Phi^{n(x,s,t)}x,\Phi^{n(x,s,t)}x')<\epsilon^2$ and $|s-s'|\leq\epsilon^2$. 
\end{proof}

\begin{lemma}\label{cocycle} Let $V\in \R$ and $P=\{-p',p'\}$ for $p'\neq 0$ and
$\zeta:=\frac{\int_Xf d\mu}{\int_Yg d\nu}$. Let $A_k\in Aut(X,\cB,\mu)$,
$A_k\to Id$ uniformly. Assume moreover that for every $\epsilon'>0$ and $N'\in \N$ there exist $0<\kappa'=\kappa'(\epsilon')<\epsilon'$, $\delta'=\delta'(\epsilon',N')>0$, such that for all $y,y'\in Y$ satisfying $d_2(y,y')<\delta'$, every $k$ such that $d_1(A_k,Id)<\delta'$ and every $x,x':=A_{k}x\in X$ there are $M'\geq N'$, $L\geq 1$, $\frac{L'}{M'}\geq \kappa'$ and $p\in P$ satisfying:
\begin{equation}\label{eq.bounded}
|S_{M'}(f)(x)-S_{M'}(f)(x')|<V
\end{equation}
\begin{equation}\label{forw.coc}
|(S_{M'}(f)(x)-S_{M'}(f)(x'))-(S_{[\zeta M']}(g)(y)-S_{[\zeta M']}(g)(y'))-p|<\epsilon'
\end{equation}
\begin{equation}\label{base:close}
d_1(\Phi^wx,\Phi^wx'), d_2(\Psi^wy,\Psi^wy')<\kappa'
\end{equation}
for $w\in [0,\max(1,\zeta)(M'+L')]\cap \Z$; and for $h=\{f,g\}$
\begin{equation}\label{forw.tight}
|(S_w(h)(x)-S_w(h)(x'))-(S_u(h)(x)-S_u(h)(x'))|<\epsilon',
\end{equation}
for every  $w,u\in[0,2\max(1,\zeta) (M'+L')]$, $|w-u|\leq L'$. 
Then $(\Phi_t^f)$ and $(\Psi_t^g)$ are disjoint.
\end{lemma}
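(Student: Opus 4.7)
The plan is to deduce the lemma from Proposition \ref{disjoint.flows} applied to $(\Phi_t^f)$ on $X^f$ and $(\Psi_t^g)$ on $Y^g$, with parameters $v':=p'$, $P':=[-(V+p'+1),V+p'+1]$ (with $V$ the constant of (\ref{eq.bounded})), and $c:=1/2$. The automorphisms $A_k$ of $X$ are lifted to $\widetilde A_k(x,s):=(A_kx,s)$ on $X^f$; since $A_k\to\mathrm{id}$ uniformly and $f$ is continuous and bounded away from zero by (\ref{do2}), these lifts are well-defined measure-preserving maps on a subset of $X^f$ of measure tending to $\mu^f(X^f)$, converging to the identity uniformly in the metric $d_1^f$. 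Define $E_k\subset X^f$ as the intersection of $\{(x,s):x\in X_k,\ \eta<s<f(x)-\eta\}$ with a large Birkhoff-typical set (see below), for a small fixed $\eta>0$, so that $\mu^f(E_k)\geq c\,\mu^f(X^f)$ for all $k$ large; take $Z\subset Y^g$ analogously.

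Given $\epsilon>0$ and $N\in\N$ for the conclusion of Proposition \ref{disjoint.flows}, fix $\epsilon':=\epsilon^2/C$ (with $C$ depending on $\sup f,\sup g,\inf f,\inf g$) and $N'\in\N$ large, then invoke the hypothesis of the lemma to obtain $\kappa',\delta'$ and, for admissible $(x,s),(x',s'):=(A_kx,s),(y,r),(y',r')$, integers $M'\geq N'$, $L'\geq\kappa'M'$ and $p\in P$ satisfying (\ref{eq.bounded})--(\ref{forw.tight}). Because $f,g$ are bounded above and away from zero, the iterate counts $n(x,s,t),m(y,r,t)$ for $t\in[M,M+L]$ stay inside $[0,\max(1,\zeta)(M'+L')]$ provided $M\asymp M'\int_X f\,d\mu$ and $L\asymp L'\int_X f\,d\mu$, and this preserves $L/M\geq\kappa$ for some $\kappa=\kappa(\epsilon)>0$. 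Set
$$
V^f:=S_{M'}(f)(x)-S_{M'}(f)(x'),\qquad V^g:=S_{[\zeta M']}(g)(y)-S_{[\zeta M']}(g)(y'),
$$
so $V^f\in[-V,V]$ by (\ref{eq.bounded}); by (\ref{forw.coc}) one has $V^f-V^g=p+O(\epsilon')$. Put $V:=-V^g\in P'$ and $v:=-p\in\{-p',p'\}$, so that $V+v=-V^f+O(\epsilon')$.

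For each real time $t\in[M,M+L]$, apply (\ref{forw.tight}) with $w=n(x,s,t)$ and $u=M'$ (both lie in $[0,2\max(1,\zeta)(M'+L')]$, with $|w-u|\leq L'$ by the boundedness of $f$) to conclude that $S_{n(x,s,t)}(f)(x)-S_{n(x,s,t)}(f)(x')=V^f+O(\epsilon')$, and likewise on the $Y^g$ side with $V^g$. Combined with (\ref{base:close}), Lemma \ref{lem:bew} then yields
$$
\max\bigl(d_1^f(\Phi_t^f(x,s),\Phi_{t-V^f}^f(x',s')),\,d_2^g(\Psi_t^g(y,r),\Psi_{t-V^g}^g(y',r'))\bigr)<\epsilon/2,
$$
provided the orbits at time $t$ lie at distance $>\epsilon$ from the fiber tops. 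Replacing the $X^f$-shift $-V^f$ by $V+v=-V^f+O(\epsilon')$ only introduces an $O(\epsilon')$ error in the real-time shift, absorbable since the flow has bounded speed. The set $U$ of times $t\in[M,M+L]$ at which both orbits avoid the fiber boundaries has measure at least $(1-\epsilon)L$ by a uniform Birkhoff ergodic theorem, which is arranged by including a Birkhoff-typicality condition in the definitions of $E_k$ and $Z$ and by choosing $N'$ sufficiently large. This yields exactly the conclusion (\ref{forw}) of Proposition \ref{disjoint.flows}.

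The main technical obstacle is the bookkeeping between the iterate parametrization in the hypothesis (where $M',L'$ count base iterations) and the real-time parametrization in the conclusion of Proposition \ref{disjoint.flows} (where $M,L$ parametrize the flow time), while preserving the ratio $L/M\geq\kappa$, uniformly controlling the fiber-boundary exclusion, and ensuring that the restrictions needed to realize $\widetilde A_k$ as a genuine measure-preserving map on $X^f$ discard only small-measure sets. All three issues are handled by exploiting that $f,g$ are bounded and bounded away from zero, together with a uniform Birkhoff ergodic theorem on a subset of $X^f$ of measure arbitrarily close to $\mu^f(X^f)$.
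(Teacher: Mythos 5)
Your proposal is correct and follows essentially the same route as the paper's proof: you reduce to Proposition \ref{disjoint.flows} by lifting $A_k$ to $X^f$ via $(x,s)\mapsto(A_kx,s)$, converting between base-iteration and flow-time parametrizations using the boundedness of $f,g$ and the Birkhoff ergodic theorem, applying Lemma \ref{lem:bew} on both factors, and using (\ref{forw.tight}) together with (\ref{eq.bounded})--(\ref{forw.coc}) to show the real-time shift on the $Y^g$ side lies in $P'$ while the residual shift on the $X^f$ side is within $O(\epsilon')$ of an element of $\{-p',p'\}$. The minor parameter differences (your $c=1/2$ and $P'=[-(V+p'+1),V+p'+1]$ versus the paper's $c=1$, $P'=[-2V-|p'|,2V+|p'|]$) are immaterial, and your scaling $M\asymp M'\int_X f\,d\mu$, $L\asymp L'\int_X f\,d\mu$ is the correct bookkeeping.
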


The proof of the above proposition follows similar lines (although is simpler) than the proof of Proposition 4.1. in \cite{KLU}. We provide a proof here for completeness. 

\begin{proof}[Proof of Lemma \ref{cocycle}]
We will show that the assumptions of Proposition \ref{disjoint.flows} are satisfied. Let $P':=[-2V-|p'|,2V+|p'|]$ and $v'=p'$. Let $c=1$, and $A_k^f(x,s)=(A_kx,s)$ on $X^f$ (then $A_k^f\to Id$ uniformly). 
Fix $\epsilon>0$ and $N\in \N$. Let $\epsilon'=\epsilon^4$, N'= and let $\kappa', \delta'$ be as in Lemma \ref{cocycle}. Define $\kappa:=\kappa'^2$ and $\delta:= \min(\epsilon^{10},\delta'^2)$.

By ergodic theorem for $\Phi_t^f$ there exist $N''\in \R$ and a set $E=E(\epsilon)\subset X^f$, 
 $\mu^f(E)>1-\epsilon$, such that for every $M,L\geq N''$, $\frac{L}{M}\geq \kappa$ and every $(x,s)\in E$, we have 
\begin{equation}\label{eq:bet1}
\lambda_\R\left(\left\{t\in [M,M+L]\;:\; \Phi_t^f(x,s)\in \{(x,s):\epsilon^{3/2}<s<f(x)-\epsilon^{3/2}\}\right\}\right)\geq (1-\epsilon^{4/3})L 
\end{equation}
and for $t\in \R$ such that $n(x,s,t)\in [M,M+L]$, we have 
\begin{equation}\label{eq:nxt}
|t- n(x,s,t)\int_X fd\mu|<\kappa'^2 t.
\end{equation}
Similarly, by ergodic theorem for $\Psi_t^g$ there exist $N''\in \R$ and $Z=Z(\epsilon)\subset Y^f$,
$\nu^g(Z)>1-\epsilon$ such that for every $M,L\geq N''$, $\frac{L}{M}\geq \kappa$ and every $(y,r)\in Z$, we have 
\begin{equation}\label{eq:bet2}
\lambda_\R\left(\left\{t\in [M,M+L]\;:\; \Psi_t^g(y,r)\in \{(y,r):\epsilon^{3/2}<r<g(y)-\epsilon^{3/2}\}\right\}\right)\geq (1-\epsilon^{4/3})L,
\end{equation}
and for $t\in \R$ such that $m(y,r,t)\in [M,M+L]$, we have 
\begin{equation}\label{eq:nxt2}
|t- m(y,r,t)\int_Y gd\nu|<\kappa'^2 t.
\end{equation}

For $k\in \N$ let $E_k=E_k(\epsilon):=E$. Fix $E\ni (x,s), (x',s)=A_k(x,s)$ with $d_1^f((x,s),(x',s))<\delta$ and $(y,r), (y',r')\in Z$ with $d_2^g((y,r),(y',r'))<\delta$. Let M',L',p come from Lemma \ref{cocycle} for $x,x'$ and $y,y'$.  Define $M, L$ by $n(x,s,M)\int_X fd\mu =M'$ and $n(x,s,M+L)\int_X fd\mu=M'+L'$. It follows by \eqref{eq:nxt} that $\frac{L}{M}\geq \kappa$ and $M\geq N$. 

Let $U\subset [M,M+L]$ be such that for $t\in U$, we have $\Phi_t^f(x,s)\in \{(x,s):\epsilon^{3/2}<s<f(x)-\epsilon^{3/2}\}$, $\Psi_t^g(y,r)\in \{(y,r):\epsilon^{3/2}<r<g(y)-\epsilon^{3/2}\}$ and
\begin{equation}\label{sh3}
d_1(\Phi^{n(x,s,t)}x,\Phi^{n(x,s,t)}x'),
d_2(S^{m(y,r,t)}x,S^{m(y,r,t)}x')<\epsilon^2.
\end{equation}
By \eqref{eq:bet1}, \eqref{eq:bet2} and \eqref{base:close} it follows that $|U|\geq (1-\epsilon)L$. Let us then set 
$$
\begin{aligned}
V(t)&=S_{n(x,s,t)}(f)(x)-S_{n(x,s,t)}(f)(x')\,,\\  W(t)&=S_{m(y,r,t)}(g)(y)-S_{m(y,r,t)}(g)(y')\,.
\end{aligned}
$$
By Lemma \ref{lem:bew} (for $(\Phi_t^f)$ and $(\Psi_t^g)$) it follows that 
$$
d_1^f(\Phi_t^f(x,s),\Phi_{t-W(t)+(W(t)-V(t))}^f(x',s))\leq 2\epsilon^2.
$$
and
$$
d_2^g(\Psi_t^g(y,r),\Psi_{t-W(t))}^g(y',r'))\leq 2\epsilon^2.
$$

Notice that for $t\in [M,M+L]$, by \eqref{forw.tight} (and \eqref{eq:nxt},\eqref{eq:nxt2}), we have
\begin{equation}\label{sh1}
|V(t)-V(M)|,|W(t)-W(M))|\leq \epsilon^2
\end{equation}
 Moreover, by \eqref{forw.coc} and \eqref{forw.tight}, we have 
\begin{equation}\label{sh2}
|V(M)-W(M)-p|<\epsilon^2. 
\end{equation}
Hence 
$$
d_1^f(\Phi_t^f(x,s),\Phi_{t-W(M)+p)}^f(x',s))\leq 4\epsilon^2.
$$
and
$$
d_2^g(\Psi_t^g(y,r),\Psi_{t-W(M))}^g(y',r'))\leq 4\epsilon^2.
$$
Finally, by \eqref{eq.bounded} and \eqref{forw.tight} it follows that  $|W(M)|\leq 
|V(M)|+p+2\epsilon^2\leq 2V+p$ and hence $W(M)\in P$. This finishes the proof.
\end{proof}

We will use Proposition \ref{cocycle} to prove Theorem \ref{thm:main2}.

\section{Birkhoff sums over toral skew-shifts}\label{sec:birksums}
In what follows $\Phi_{\a,\beta}(x,y)=(x+\a, y+x+\beta)$ is a (linear) skew-shift  on $\T^2$, with $\a$ of bounded type and $g\in W^s(\T^2)$, with $s>7/2$ and 
$\int_{\T^2} g d \lambda_{\T^2} =0$, where $\lambda_{\T^2}$ denotes the normalized (Haar) Lebesgue measure on $\T^2$. We also assume that $g$ is not a coboundary (although some lemmas below are true also for coboundaries).  

\subsection{Cohomological equation for skew-shifts}

The cohomological equation for (linear) skew-shifts on $\T^2$ can be completely solved by Fourier series (see \cite{Katok:CC}, \cite{AFU})
Let $\Phi_{\a,\beta} :\T^2\to \T^2$ be given for $\a\in [0,1)\setminus \Q$ by the formula 
$$\Phi_{\a,\beta}(x,y)=(x+\a,y+x+\beta)\,.$$
It follows (see e.g. \cite{AFU}, \S 5) that
\begin{equation}\label{eq:dec}
L^2(\T^2)=\bigoplus_{(m,n)\in \Z_{|n|}\times\Z\setminus\{0\}}H_{m,n}
\end{equation}
where the spaces $H_{m,n}$ are $\Phi_{\a,\beta}$-invariant and 
$$
H_{m,n}=\bigoplus_{j\in\Z} \C e_{m+jn,n}\subset L^2(\T^2)
$$
with $e_{a,b}(x,y)=\exp(2\pi i(ax+by))$, for all $(a,b)\in \Z^2$.

\smallskip
The following result holds.

\begin{theorem} (\cite{Katok:CC}, Th. 11.25, \cite{AFU}, Th. 10) 
\label{thm:smoothcb}
For every $(m,n)\in \Z^2$, there exists a unique distributional obstruction to the existence of a smooth solution $u\in C^{\infty} (H_{m,n})$ of  the cohomological equation
$$
u\circ \Phi_{\a,\beta} - u =g 
$$
with right hand side $g \in C^{\infty} (H_{m,n})$. Such an obstruction is the invariant distribution $D_{m,n} \in W^{-s} (\T^2)$
for all $s>1/2$ defined as follows:
$$
D_{m,n}  ( e_{a, b} ) := 
\begin{cases} 
e^{-2\pi i [ (\a m +\beta n) j  +  \a n\binom{j}{2}] }  \quad &\text { \rm if }  (a, b) =(m+jn, n) \,; \\
   0                                      \quad  &\text{ \rm otherwise} .
\end{cases}
$$
The solution of the cohomological equation, for any function $g\in C^{\infty} (H_{m,n})$ such that
$D_{m,n} (g) =0$, is given by the following formula. If $g= \sum_{j\in \Z}  g_j e_{m+jn, n}$,
the solution $u= \sum_{j\in \Z}  u_j e_{m+jn, n}$ has Fourier coefficients:
\begin{equation}
\begin{aligned}
u_j &= - e^{2\pi i [(\a m +\beta n)j + \a n \binom{j}{2}]}  \sum_{k=-\infty}^j g_k 
e^{ -2\pi i[(\a m +\beta n) k + \a n \binom{k}{2}]} \\ &= 
 e^{2\pi i [(\a m +\beta n)j + \a n \binom{j}{2}]}  
 \sum_{k=j+1}^{\infty} g_k e^{ -2\pi i[(\a m +\beta n) k + \a n \binom{k}{2}]}  \,.
\end{aligned}
\end{equation}
If $g \in W^s (H_{m,n})$ for any $s>1$ and $D_{m,n} (g) =0$, then the above solution
$u\in W^t  (H_{m,n})$ for all $t<s-1$ and there exists a constant $C_{s,t} >0$ such that
$$
\Vert u\Vert _t \leq C_{s,t}  \, \Vert  g \Vert_s \,.
$$
\end{theorem}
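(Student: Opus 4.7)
The plan is to diagonalize the problem over the invariant subspaces $H_{m,n}$, reduce the cohomological equation in each subspace to an explicit two-term recursion in the Fourier index $j$, identify the obstruction as the summability condition for the recursion, write down the closed-form solution, and finally obtain the Sobolev estimate by careful partial summation.

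First I would verify invariance: a direct computation using $e_{a,b}(x+\a,y+x+\beta)=e^{2\pi \i(a\a+b\beta)}e_{a+b,b}(x,y)$ shows that $\Phi_{\a,\beta}^{\ast}$ sends $e_{a,b}$ to a scalar multiple of $e_{a+b,b}$, so each fibre $H_{m,n}=\bigoplus_{j\in\Z}\C e_{m+jn,n}$ is preserved. Writing $g=\sum_j g_j e_{m+jn,n}$, $u=\sum_j u_j e_{m+jn,n}$ and matching coefficients of $e_{m+jn,n}$ in $u\circ\Phi_{\a,\beta}-u=g$, the equation becomes the one-dimensional recursion
\[
\chi_{j-1}u_{j-1}-u_j=g_j,\qquad \chi_k:=e^{2\pi \i[(m+kn)\a+n\beta]}.
\]
Iterating this backward and computing the telescoping product via $\sum_{\ell=r}^{j-1}\ell=\binom{j}{2}-\binom{r}{2}$ gives
\[
\prod_{\ell=r}^{j-1}\chi_\ell=\phi(j)\overline{\phi(r)},\qquad \phi(k):=e^{2\pi \i[(\a m+\beta n)k+\a n\binom{k}{2}]},
\]
so that $u_j=u_{-K}\phi(j)\overline{\phi(-K)}-\phi(j)\sum_{r=-K+1}^{j}g_r\overline{\phi(r)}$ for any $K\geq 0$.

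Next I would read off the obstruction. A distributional solution forces $u_{\pm K}\to 0$ as $K\to\infty$; letting $K\to\infty$ on both sides and also iterating forward gives the two expressions
\[
u_j=-\phi(j)\sum_{k\leq j}g_k\overline{\phi(k)}=\phi(j)\sum_{k>j}g_k\overline{\phi(k)},
\]
whose equivalence is precisely the vanishing of the bi-infinite sum $\sum_{k\in\Z}g_k\overline{\phi(k)}=D_{m,n}(g)$. Since $|\phi(k)|=1$, $D_{m,n}$ is a well-defined tempered distribution of order $\leq 1/2$ in the variable indexing $H_{m,n}$, so $D_{m,n}\in W^{-s}(\T^2)$ for every $s>1/2$, and $\Phi_{\a,\beta}$-invariance of $D_{m,n}$ follows immediately from the fact that applying the coboundary operator to any $f\in H_{m,n}$ produces a function whose Fourier coefficients cancel when paired against $\overline{\phi(k)}$.

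The main technical work is the Sobolev estimate $\Vert u\Vert_t\leq C_{s,t}\Vert g\Vert_s$ for $t<s-1$. Using either of the two representations of $u_j$, one has $|u_j|\leq |\sum_{|k|\geq |j|}g_k\overline{\phi(k)}|$ (choosing the representation with the tail on the side where $|k|\geq|j|$), and an Abel summation against the rapidly oscillating phase $\overline{\phi(k)}$ transfers one derivative from $g$ to the partial sums. More precisely, writing $g_k=(1+|m+kn|^2+n^2)^{-s/2}\widetilde g_k$ with $\{\widetilde g_k\}\in\ell^2$ of norm $\Vert g\vert_{H_{m,n}}\Vert_s$, one summation by parts against $\overline{\phi(k)}$ produces a gain of a factor $(1+|m+jn|)^{-1}$ at the cost of an extra power of $k$, which is absorbed by one unit of Sobolev regularity. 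Summing $(1+|m+jn|^2+n^2)^t|u_j|^2$ over $j$ and then over $(m,n)$ using the orthogonality of the spaces $H_{m,n}$ yields the claimed bound.

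The hard part is controlling the oscillatory sums uniformly in $(m,n)$: the phase $\a n\binom{k}{2}$ has quadratic speed, so naive term-by-term estimates lose too many derivatives, and one must split the summation range into blocks on which the phase is essentially linear to apply a stationary-phase/van-der-Corput type bound. The final exponent $t<s-1$ (loss of exactly one derivative, independent of Diophantine properties of $\a$) reflects the fact that once the obstruction is removed one gains quadratic cancellation on each dyadic block, recovering all but one derivative of $g$.
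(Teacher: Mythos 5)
The paper does not actually prove Theorem~\ref{thm:smoothcb}: it is cited verbatim from Katok and from Avila--Forni--Ulcigrai, so there is no internal proof to compare yours against. I will therefore assess the argument on its own merits.

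Your reduction is correct. The identity $e_{a,b}\circ\Phi_{\a,\beta}=e^{2\pi\i(a\a+b\beta)}e_{a+b,b}$ shows $H_{m,n}$ is invariant; matching Fourier coefficients gives the recursion $\chi_{j-1}u_{j-1}-u_j=g_j$ with $\chi_k=\phi(k+1)\overline{\phi(k)}$; substituting $v_j=\overline{\phi(j)}u_j$ telescopes to both half-line expressions for $u_j$; and their compatibility is precisely $\sum_k g_k\overline{\phi(k)}=D_{m,n}(g)=0$. The claim $D_{m,n}\in W^{-s}(\T^2)$ for $s>1/2$ is correct because $|D_{m,n}(e_{m+jn,n})|=1$ and the weights $(1+(m+jn)^2+n^2)^{-s}$ are summable in $j$. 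All of this matches the statement.

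The Sobolev estimate step is where the argument breaks. You propose Abel summation against $\overline{\phi(k)}$, claiming it transfers one derivative and produces a gain of $(1+|m+jn|)^{-1}$, and then invoke a van-der-Corput/block decomposition to handle the quadratic phase. This cannot work \emph{uniformly in} $\a$: the partial sums $\sum_{k=a}^b\overline{\phi(k)}$ are Gauss-sum--like, and without a Diophantine hypothesis on $\a n$ they are not uniformly bounded (if $\a n$ is very close to an integer the sum is of size $b-a$). Since the theorem asserts a constant $C_{s,t}$ independent of Diophantine properties, no oscillation-based mechanism can yield it. Your heuristic that the loss of exactly one derivative ``reflects quadratic cancellation on each dyadic block'' is therefore misleading.

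The actual estimate is elementary and uses no cancellation at all. For $j\geq 0$ bound $|u_j|\leq\sum_{k>j}|g_k|$ (and symmetrically for $j<0$) and apply Cauchy--Schwarz with weights $(1+(m+kn)^2+n^2)^{\pm s/2}$:
$$
|u_j|^2 \;\leq\; \|g\|_s^2\sum_{k>j}\bigl(1+(m+kn)^2+n^2\bigr)^{-s}.
$$
Because the integers $m+kn$ are spaced by $|n|$, integral comparison gives, for $s>1/2$, the tail bound $\lesssim |n|^{-1}\bigl(1+(m+jn)^2+n^2\bigr)^{(1-2s)/2}$ uniformly in $(m,n,j)$. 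Multiplying by $(1+(m+jn)^2+n^2)^t$ and summing over $j$ (again by integral comparison, with spacing $|n|$) converges exactly when $t<s-1$, and produces a factor $|n|^{2(t-s)}\leq1$; hence $\|u\|_t\leq C_{s,t}\|g\|_s$ with a constant uniform in $(m,n)$, and the bound carries over to the full spaces by orthogonality of the $H_{m,n}$. So the loss of one derivative is simply the cost of replacing an $\ell^2$-weighted sequence by its tail sums via Cauchy--Schwarz, not a deficit recovered by stationary phase.
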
 

The results below establish the quantitative behavior of the square mean of ergodic averages for smooth 
functions under the skew-shift.

\begin{lemma}  
\label{lemma:L2bounds}
(\cite{AFU}, Lemma 15, or  \cite{FK}, Lemma 8.1)
Let  $(m,n)\in \Z_{\vert n\vert } \times \Z\setminus\{0\}$ and let $s>1/2$.
There exists a constant $C_s>0$ such that, for any  $g \in W^s(H_{m,n})$, 
\begin{equation}
\begin{aligned}
 C_s^{-1} \vert D_{m,n} (g) \vert  \leq   &\liminf_{N\to +\infty}    \frac{ 1} {N^{1/2}}  \Vert \sum_{k=0}^{N-1}  g \circ \Phi_{\a,\beta}^k 
\Vert_{L^2(\T^2)}  \\ 
&\leq \limsup_{N\to +\infty}    \frac{ 1} {N^{1/2}}  \Vert \sum_{k=0}^{N-1}  g \circ \Phi_{\a,\beta}^k 
\Vert_{L^2(\T^2)} \leq C_s \vert D_{(m,n)} (g) \vert\,.
\end{aligned}
\end{equation}
\end{lemma}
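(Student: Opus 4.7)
The plan is to reduce the statement to a direct computation of the Fourier coefficients of the Birkhoff sum $S_N g$. Writing $g=\sum_{j\in\Z} g_j\, e_{m+jn,n}$ and using $\Phi_{\a,\beta}^k(x,y)=(x+k\a,\, y+kx+k\beta+\tbinom{k}{2}\a)$, a direct calculation gives
$$
e_{m+jn,n}\circ\Phi_{\a,\beta}^k \,=\, \frac{\mu_{j+k}}{\mu_j}\,e_{m+(j+k)n,n},
\qquad \mu_j:=\exp\bigl(2\pi i[(m\a+n\beta)j+n\tbinom{j}{2}\a]\bigr).
$$
The phases $\bar\mu_j$ are precisely those appearing in the formula for $D_{m,n}$ in Theorem~\ref{thm:smoothcb}, so upon setting $h_j:=g_j\bar\mu_j$ one has $D_{m,n}(g)=\sum_j h_j$ and, after reindexing,
$$
S_N g \,=\, \sum_{l\in\Z}\mu_l\,H_N(l)\,e_{m+ln,n},
\qquad H_N(l):=\sum_{j=l-N+1}^{l}h_j\,.
$$

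Next, by orthonormality of the exponentials in $L^2(\T^2)$ and $|\mu_l|=1$, expanding the square gives
$$
\|S_N g\|_{L^2(\T^2)}^2 \,=\, \sum_{l\in\Z}|H_N(l)|^2 \,=\, \sum_{j_1,j_2\in\Z} h_{j_1}\bar h_{j_2}\,\max\bigl(0,N-|j_1-j_2|\bigr)\,,
$$
which splits as $N\Sigma_N - T_N$ with $\Sigma_N:=\sum_{|j_1-j_2|\leq N-1}h_{j_1}\bar h_{j_2}$ and $T_N:=\sum_{|j_1-j_2|\leq N-1}h_{j_1}\bar h_{j_2}\,|j_1-j_2|$. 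The plan is then to show $\Sigma_N\to|D_{m,n}(g)|^2$ and $T_N/N\to 0$ as $N\to +\infty$.

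Both convergences rest on the absolute summability $\sum_j|h_j|\leq C_s\|g\|_s<+\infty$, which for $g\in W^s(H_{m,n})$ with $s>1/2$ follows from Cauchy--Schwarz against the Sobolev weights $(1+|m+jn|^2+n^2)^s$, using that $\sum_j(1+|j|)^{-2s}$ converges precisely when $2s>1$. Granted that, Fubini and dominated convergence with dominator $|h_{j_1}||h_{j_2}|$ show that $\Sigma_N\to\bigl(\sum_j h_j\bigr)\overline{\bigl(\sum_j h_j\bigr)}=|D_{m,n}(g)|^2$, while $|T_N|/N\leq \sum_{j_1,j_2}|h_{j_1}||h_{j_2}|\,\min(1,|j_1-j_2|/N)\to 0$ by dominated convergence. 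Together these give $\|S_N g\|_{L^2}^2/N\to|D_{m,n}(g)|^2$, which is in fact strictly stronger than the two-sided bound in the statement and trivially implies it.

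The principal obstacle is precisely the threshold $s>1/2$: it is exactly what makes $D_{m,n}$ a continuous linear functional on $W^s(H_{m,n})$ and what ensures the absolute convergence of $\sum_j h_j$, without which the splitting $N\Sigma_N - T_N$ would fail to be meaningful. Once this summability is secured, the rest is an elementary orthogonality calculation that requires no Diophantine property of~$\a$; in particular the bound holds for \emph{every} irrational $\a$.
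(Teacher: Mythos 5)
The paper does not prove this lemma itself --- it is quoted from \cite{AFU} (Lemma 15) and \cite{FK} (Lemma 8.1) --- so there is no internal proof to compare against. Your argument is correct and self-contained, and it in fact establishes the stronger statement that $N^{-1/2}\Vert S_N g\Vert_{L^2}$ \emph{converges} to $\vert D_{m,n}(g)\vert$, from which the two-sided liminf/limsup bound follows trivially with $C_s=1$.

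Your route is the natural Fourier-analytic one on $H_{m,n}$: the twisted shift formula $e_{m+jn,n}\circ\Phi_{\a,\beta}^{k}=(\mu_{j+k}/\mu_j)\,e_{m+(j+k)n,n}$ reduces $S_N g$ to a sliding-window sum of the renormalized coefficients $h_j=g_j\bar\mu_j$, and then Parseval plus the Fej\'er-type identity $\sum_l\vert H_N(l)\vert^2=\sum_{j_1,j_2}h_{j_1}\bar h_{j_2}(N-\vert j_1-j_2\vert)_+$ together with $\ell^1$ summability of $(h_j)$ (coming from Cauchy--Schwarz in the Sobolev space, where $s>1/2$ enters exactly as expected) does the rest by dominated convergence. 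This is essentially the computation underlying Lemma 15 of \cite{AFU}. Two small remarks. First, your observation that the argument is Diophantine-free is correct and worth keeping in mind --- nothing about $\a$ (irrational or not, bounded type or not) is used, only the unimodularity of the $\mu_j$'s, so this $L^2$ asymptotic holds in total generality; the bounded-type hypothesis only enters elsewhere (e.g.\ for the pointwise upper bound of Lemma~\ref{lem:upperbound}). Second, the constant $C_s$ in the stated lemma is allowed to depend only on $s$, and your proof delivers this uniformity automatically since the limit exists and equals $\vert D_{m,n}(g)\vert$ exactly; the role of $s>1/2$ is only to guarantee $\sum_j\vert h_j\vert\leq C_s\Vert g\Vert_s<+\infty$, i.e.\ that $D_{m,n}$ is a bounded functional on $W^s(H_{m,n})$.
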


\subsection{General estimates}
\begin{lemma}\label{lem:upperbound} There exists a constant $C_{\a,g}>0$ such that for every $N\in \N$, and every $(x,y)\in \T^2$, we have 
$$
|S_{N}(g)(x,y)|\leq C_{\a,g}N^{1/2}.
$$
\end{lemma}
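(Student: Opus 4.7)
My plan is to upgrade the $L^2$ estimate of the preceding Lemma to a uniform $L^\infty$ estimate by decomposing $g$ spectrally along the $\Phi_{\a,\beta}$-invariant decomposition of $L^2(\T^2)$ and using the explicit formulas provided by the cohomological-equation theorem stated above. I first split $g = g_0 + g_1$, where $g_0(x,y) = \int_\T g(x,y')\,dy'$ depends only on $x$. Since $\Phi_{\a,\beta}$ acts on the first coordinate by rotation by the bounded-type number $\a$, the classical Denjoy--Koksma inequality yields $|S_N g_0|\leq C_{\a,g_0}$ uniformly, an $O(1)$ bound much stronger than the required $\sqrt N$.

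For the remainder $g_1 = \sum_{(m,n):\,n\neq 0} g_{m,n}$ with $g_{m,n}\in H_{m,n}$, I apply the cohomological-equation theorem in each invariant subspace to write
\begin{equation*}
g_{m,n} = D_{m,n}(g_{m,n})\,e_{m,n} + \bigl(u_{m,n}\circ\Phi_{\a,\beta} - u_{m,n}\bigr),
\end{equation*}
with $u_{m,n}\in W^t(H_{m,n})$ for every $t<s-1$. Since $s>7/2$ gives $s-1>5/2>1$, the Sobolev embedding $W^t(\T^2)\hookrightarrow L^\infty(\T^2)$ applies; the coboundary piece telescopes to $|S_N(u_{m,n}\circ\Phi_{\a,\beta}-u_{m,n})|\leq 2\|u_{m,n}\|_\infty \leq C_s\|g_{m,n}\|_{W^s}$, and summing over $(m,n)$ against the Sobolev weights via Cauchy--Schwarz produces an $O(1)$ contribution, far below the required threshold.

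The obstruction piece reduces to bounding, uniformly in $(x,y)$, the quadratic Weyl sum
\begin{equation*}
S_N e_{m,n}(x,y) = e^{2\pi i(mx+ny)}\sum_{k=0}^{N-1} e^{2\pi i[k(m\a+nx+n\beta)+n\a k(k-1)/2]}.
\end{equation*}
Because $\a$ is of bounded type, the renormalization scheme of~\cite{FlaFo1} (which also underlies the $L^2$ estimate of the preceding Lemma) yields $|S_N e_{m,n}(x,y)|\leq C_\a|n|^{1/2}N^{1/2}$, with constant depending only on the bounded-type constant of $\a$. Combining with the decay $|D_{m,n}(g_{m,n})|\leq C_s(1+m^2+n^2)^{-(s-1)/2}|n|^{-1/2}\|g\|_{W^s}$, read off from the explicit Fourier formula for $D_{m,n}$ together with the Sobolev decay of the Fourier coefficients of $g$, and summing over $(m,n)$ with the $|n|$ residues of $m$ per value of $n$, one obtains the obstruction bound $C_{\a,g}\sqrt{N}$, convergence of the double sum being guaranteed by $s>7/2$. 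The main obstacle is the uniform quadratic Weyl-sum estimate: the bounded-type hypothesis on $\a$ must be transferred through the $n$-dependence of the quadratic phase without too much loss in $n$, which is exactly what the Flaminio--Forni renormalization delivers.
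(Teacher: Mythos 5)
Your proposal takes a genuinely different and more self-contained route than the paper's. The paper simply transfers the Birkhoff sum to an integral over a Heisenberg nilflow, $S_N(g)(x,y)=\int_0^N G\circ\phi_t^W(p)\,dt$, and then quotes the effective equidistribution theorem for bounded-type nilflows (Lemma~1.4.9 of \cite{FoSurvey}, alternatively Theorem~6.2 and Lemma~6.1 of \cite{FK}), which directly gives the $C_\sigma N^{1/2}\Vert G\Vert_\sigma$ bound. You instead decompose spectrally along the $\Phi_{\a,\beta}$-invariant subspaces $H_{m,n}$, split each component into an obstruction term plus a coboundary, telescope the coboundaries, and reduce the obstruction to a quadratic Weyl sum; the Sobolev bookkeeping over $(m,n)$ then converges with room to spare for $s>7/2$. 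This exposes the underlying mechanism that the paper's citation hides, which is a virtue, but the cost is that you must actually prove the Weyl-sum estimate.

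That estimate is where the argument has a real gap: the bound $\sup_{(x,y)}\vert S_N e_{m,n}(x,y)\vert\leq C_\a\vert n\vert^{1/2}N^{1/2}$, which you yourself flag as ``the main obstacle'', is asserted rather than proven, and attributing it to ``the renormalization scheme of \cite{FlaFo1}'' is too vague. Flaminio--Forni's theorem controls Sobolev norms of ergodic averages of nilflows; extracting from it a pointwise quadratic Weyl-sum bound with the stated $\vert n\vert^{1/2}$ dependence needs a genuine argument, not a cite. The claimed bound does appear to be essentially correct: Weyl differencing gives $\sup_\xi\bigl\vert\sum_{k<N}e^{2\pi i(k\xi+\binom{k}{2}n\a)}\bigr\vert^2\lesssim\sum_{h<N}\min(N,\|hn\a\|^{-1})$, and Dirichlet's theorem combined with the bounded-type lower bound on rational approximations of $\a$ forces any Dirichlet denominator $q\leq N$ of $n\a$ to satisfy $q\gtrsim_\a N/\vert n\vert$, yielding a bound $\lesssim_\a\vert n\vert N\log N$, so possibly with an extra harmless logarithm. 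But this must be carried out; it is the technical heart of your route. A smaller inaccuracy: your $O(1)$ bound for $S_N g_0$ is not a consequence of Denjoy--Koksma, which via the Ostrowski expansion and bounded type only gives $O(\log N)$; the uniform $O(1)$ bound comes from solving the cohomological equation for the bounded-type circle rotation using the smoothness of $g_0$. Either bound suffices here, but the mechanism you invoke is not the one doing the work.
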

\begin{proof} Since $\a\in \R\setminus \Q$ is of bounded type, the statement follows from Lemma~1.4.9 of \cite{FoSurvey} or from Lemma 6.1. and Theorem 6.2. of \cite{FK}. 
In fact, since any constant roof suspension of $\Phi_{\a,\beta}$ is smoothly isomorphic to a Heisenberg nilflow $(\phi^W_t)$ on a nilmanifold $M$, generated by a bounded type vector field $W\in \mathfrak h$, for any $g\in W^s(\T^2)$ and every $(x,y)\in \T^2$, there exist a function $G\in W^s(M)$ and $p\in M$  such that
$$
S_{N}(g)(x,y)= \int_0^N  G\circ \phi^W_t(p) dt \,.
$$
By Lemma~1.4.9 of \cite{FoSurvey}  for any Heisenberg triple $\mathcal F:=(X,Y,Z)$ and for any $\sigma>2$, there exists a function $B_\sigma(\mathcal F,T)$ (defined in
formula $(1.71)$ of \cite{FoSurvey}) such that, for any function $f\in W^\sigma(M)$ and for all $(p,T)\in M\times \R$, 
$$
\vert  \frac{1}{T} \int_0^T  f\circ \phi^X_t(p) dt \vert  \leq \frac{ B_\sigma(\mathcal F,T)}{T}  \Vert f\Vert_\sigma \,.
$$
For $X=W$ of bounded type, by definition there exists a constant $C>0$ such that $B_\sigma(\mathcal F,T) \leq C_\sigma T^{1/2}$, hence we derive
$$
\vert S_{N}(g)(x,y) \vert \leq \vert \int_0^N  G\circ \phi^W_t(p) dt \vert  \leq C_\sigma N^{1/2} \Vert G\Vert_s\,
$$
(see also the comments after the proof of  Lemma~1.4.9 in \cite{FoSurvey}). Alternatively, from Theorem 6.2. of~\cite{FK} we derive that for 
$a=(X,Y,Z)$ satisfying an explicit Diophantine condition (depending only on $Y$) and for any $f\in W^s(M)$,  there exists a H\"older cocycle $\beta^f (a, p,T)$ such that
 $$
 \vert   \int_0^T  f\circ \phi^X_t(p) dt  -  \beta^f (a,p,T)\vert \leq C_s(a) \Vert f \Vert_s \,, 
 $$
and  Lemma 6.1 of \cite{FK} implies that whenever $X=W$ is of bounded type, the cocycle $\beta^f (a, p,T)$ satisfies the upper bound
$$
\vert \beta^f (a, p,T) \vert  \leq   C_s (a)  T^{1/2} \Vert f \Vert_s\,,
$$
which again implies our statement.
\end{proof}

\begin{lemma}\label{lem:gro} There exists a constant $c''>0$ such that for every $N\in \N$, 
$$
\|S_{N}(g)\|_{C^0(\T^2)}\geq c'' n^{1/2}.
$$
\end{lemma}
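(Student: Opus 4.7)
\medskip
\noindent\textbf{Plan for Lemma \ref{lem:gro}.}
The strategy is to combine the $L^2$ lower bound from Lemma \ref{lemma:L2bounds} with the trivial inequality $\|\cdot\|_{C^0(\T^2)}\geq \|\cdot\|_{L^2(\lambda_{\T^2})}$, which holds because $\lambda_{\T^2}$ is a probability measure. First I would decompose $g$ according to the $\Phi_{\a,\beta}$--invariant orthogonal splitting (\ref{eq:dec}): write $g=g_0+\sum_{(m,n)}g_{m,n}$, where $g_0\in L^2(\T_x)$ is the component depending only on the first coordinate and the sum ranges over $(m,n)\in \Z_{|n|}\times(\Z\setminus\{0\})$, with $g_{m,n}\in W^s(H_{m,n})$ the projection of $g$ onto $H_{m,n}$.

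Next I would show that the hypothesis that $g$ is not a coboundary forces the existence of some $(m_0,n_0)$ with $n_0\neq 0$ for which $D_{m_0,n_0}(g)\neq 0$. Since $\a$ is of bounded type, the mean-zero function $g_0\in W^s(\T)$ is a smooth coboundary for the rotation $x\mapsto x+\a$ by a standard Fourier/Denjoy--Koksma argument. If in addition $D_{m,n}(g)=0$ for every $(m,n)$ with $n\neq 0$, Theorem \ref{thm:smoothcb} yields solutions $u_{m,n}\in W^t(H_{m,n})$ of $u_{m,n}\circ\Phi_{\a,\beta}-u_{m,n}=g_{m,n}$, with $\|u_{m,n}\|_t\leq C_{s,t}\|g_{m,n}\|_s$ for any $t<s-1$. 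Summing these together with $u_0$ inside $W^t(\T^2)$ produces a genuine solution $u$ of $u\circ\Phi_{\a,\beta}-u=g$, contradicting the assumption that $g$ is not a coboundary.

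With such an $(m_0,n_0)$ in hand, Lemma \ref{lemma:L2bounds} gives a constant $c=c(s,g)>0$ such that, for all $N$ large enough,
\begin{equation*}
\|S_N(g_{m_0,n_0})\|_{L^2(\T^2)}\;\geq\; \tfrac{1}{2C_s}|D_{m_0,n_0}(g)|\,N^{1/2}\;\geq\; c\,N^{1/2}.
\end{equation*}
Because the subspaces $H_{m,n}$ are mutually orthogonal and $\Phi_{\a,\beta}$--invariant, $S_N(g_{m_0,n_0})\in H_{m_0,n_0}$ is orthogonal to $S_N(g-g_{m_0,n_0})$, whence
\begin{equation*}
\|S_N(g)\|_{L^2(\T^2)}^2\;\geq\;\|S_N(g_{m_0,n_0})\|_{L^2(\T^2)}^2\;\geq\; c^2 N.
\end{equation*}
Combining with $\|S_N(g)\|_{C^0(\T^2)}\geq \|S_N(g)\|_{L^2(\T^2)}$ yields the desired bound for all sufficiently large $N$; the finitely many small values of $N$ are handled by shrinking the constant $c''$.

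\medskip
\noindent\textbf{Main obstacle.} The only non-routine step is the implication ``not a coboundary $\Rightarrow$ some $D_{m_0,n_0}(g)\neq 0$'', which requires care about the Sobolev regularity in which the partial solutions $u_{m,n}$ are summed, as well as the separate treatment of the $x$-only component $g_0$ using the bounded type hypothesis on $\a$. Everything else is a direct transfer of the $L^2$ lower bound of Lemma \ref{lemma:L2bounds} to a $C^0$ lower bound via orthogonality and the trivial $L^2\leq L^\infty$ inequality on a probability space.
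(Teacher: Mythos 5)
Your proposal is correct and follows essentially the same route as the paper: reduce via the orthogonal decomposition \eqref{eq:dec} and the bounded-type hypothesis (to kill the $n=0$ part) to a component $g_{m_0,n_0}$ with $D_{m_0,n_0}(g)\neq 0$, apply the $L^2$ lower bound of Lemma \ref{lemma:L2bounds} to that component, lift to $S_N(g)$ by orthogonality, and finish with $\|\cdot\|_{C^0}\geq\|\cdot\|_{L^2}$ on the probability space $\T^2$. Your write-up is in fact more explicit than the paper's (which compresses the ``not a coboundary $\Rightarrow$ some $D_{m_0,n_0}(g)\neq 0$'' step and the finitely-many-small-$N$ adjustment into a single sentence), but the argument is the same.
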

\begin{proof}
Since $g$ has zero average, but it is not a coboundary, and $\a\in \R\setminus \Q$ has bounded type,  it follows that we can assume that $g\in W^s(H_{m,n})$, for some 
$(m,n)$ with $n\neq 0$. In fact, otherwise $g$ is the pull back of function on the circle $\T$, which belongs to $W^s(\T)$ with $s> 7/2$, and since $\a$ is of constant
type, it follows by Fourier series that $g$ is a coboundary with transfer function $u \in W^t(\T)$ for all $t<s-1$ (in particular $u\in C^2(\T)$).

By orthogonality of the decomposition $W^s(\T^2)$ as a direct sum of components $W^s(H_{m,n})$, for all $s\in \R$, we can assume that $g\in W^s(H_{m,n})$, for some 
$(m,n)$ with  $n\neq 0$, hence  by  Lemma~\ref{lemma:L2bounds} there exists  $c'''>0$ such that, for all $N\in \N$, 
$$
\|S_{N}(g)\|_{L^2(\T^2)}\geq \|S_{N}(g_{m,n})\|_{L^2(\T^2)} \geq    c'''N^{1/2}.
$$
This finishes the proof.
\end{proof}

\begin{lemma}\label{lem:supr} For every $\chi>0$, $\zeta>1$ there exists $V_{\chi,\zeta}>1$ such that for every $T>0$ there exists $n_\chi,K_\chi\in [T,V_{\chi,\zeta} T]$ for which 
$$
\|S_{n_\chi}(g)\|_{C^0(\T^2)}\geq (1-\chi)\zeta^{-1/2}\|S_{[\zeta n_\chi]}(g)\|_{C^0(\T^2)}
$$
and, for $q=\zeta p $, 
$$
\|S_{qK_\chi}(g)\|_{C^0(\T^2)}\geq (1-\chi)\zeta^{1/2}\|S_{ pK_\chi}(g)\|_{C^0(\T^2)}.
$$
\end{lemma}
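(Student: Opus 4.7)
The plan is to extract both bounds from a single pigeonhole/iteration scheme built on the uniform two-sided estimate
$$
c''\,n^{1/2}\leq \|S_n(g)\|_{C^0(\T^2)}\leq C_{\alpha,g}\,n^{1/2}
$$
furnished by Lemmas \ref{lem:upperbound} and \ref{lem:gro}. Setting $\tilde a_n:=\|S_n(g)\|_{C^0(\T^2)}/n^{1/2}$, the sequence $\tilde a_n$ is trapped in the fixed interval $[c'',C_{\alpha,g}]$. A short manipulation recasts the first desired inequality as $\tilde a_{n_\chi}\geq (1-\chi)\bigl([\zeta n_\chi]/(\zeta n_\chi)\bigr)^{1/2}\tilde a_{[\zeta n_\chi]}$; since the parenthesized factor does not exceed $1$, it suffices to find an integer $n_\chi\in[T,V_{\chi,\zeta}T]$ with $\tilde a_{n_\chi}\geq(1-\chi)\tilde a_{[\zeta n_\chi]}$. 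The second inequality, using $qK_\chi=\zeta\cdot pK_\chi$ exactly, rewrites as $\tilde a_{qK_\chi}\geq(1-\chi)\tilde a_{pK_\chi}$, so it is enough to locate an integer $K_\chi\in[T,V_{\chi,\zeta}T]$ with that property.

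Both reductions share a common shape: find an integer $m$ in a prescribed window with $\tilde a_m\geq(1-\chi)\tilde a_{m'}$, where $m'$ equals $[\zeta m]$ in the first case and $\zeta m$ in the second. Fix $M\in\N$ so large that $(1-\chi)^{-M}>C_{\alpha,g}/c''$; this $M$ depends only on $\chi$ and $g$. Suppose for contradiction the desired inequality fails at every admissible integer in the window. Let $m_0$ be the smallest admissible integer with $m_0\geq T$ (in the second case additionally require $p^M\mid m_0$, which, assuming $\gcd(p,q)=1$, forces the subsequent $\zeta$-multiples to remain integral), and set inductively $m_{j+1}:=[\zeta m_j]$ (first case) or $m_{j+1}:=\zeta m_j$ (second case). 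Iterating the failed inequality along the chain gives $\tilde a_{m_0}<(1-\chi)^M \tilde a_{m_M}$ in the first case and $\tilde a_{m_M}<(1-\chi)^M\tilde a_{m_0}$ in the second, and in either case the consequence $c''<(1-\chi)^M C_{\alpha,g}$ contradicts the choice of $M$. Hence some $m_j$ with $j<M$ satisfies the desired bound. Since $m_j\leq \zeta^j m_0\leq \zeta^{M-1}(T+p^M)$, taking $V_{\chi,\zeta}:=2\zeta^{M-1}p^M$ places $m_j$, and the corresponding $n_\chi$ or $K_\chi=m_j/p$, in $[T,V_{\chi,\zeta}T]$ for $T\geq p^M$.

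The principal obstacle is merely bookkeeping. In the first case one must absorb the floor-function slack $[\zeta m]/(\zeta m)\leq 1$; this is harmless because the factor is at most $1$ (so only tightens the inequality we want) and in any case approaches $1$ for large $T$, so if cleaner constants are wanted one simply runs the iteration with $\chi$ replaced by $\chi/2$. In the second case the issue is the arithmetic constraint $\zeta^j m_0\in\N$, which is handled by the divisibility requirement $p^M\mid m_0$ and inflates $V_{\chi,\zeta}$ only by a factor of $p^M$. These are minor adjustments; the only nontrivial mechanism is the geometric iteration, which is immediate once $M$ is fixed.
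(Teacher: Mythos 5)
Your argument is essentially the same as the paper's: negate the inequality, iterate along the geometric chain $m_{j+1}=\zeta m_j$ (or $[\zeta m_j]$), and play the resulting $(1-\chi)^M$ decay against the two-sided bound $c''n^{1/2}\leq\|S_n(g)\|_{C^0(\T^2)}\leq C_{\alpha,g}n^{1/2}$ from Lemmas \ref{lem:upperbound} and \ref{lem:gro}; working with the normalized quantity $\tilde a_n$ is a cosmetic repackaging. Two small slips worth fixing: the \emph{common shape} sentence reverses the direction of the desired inequality in the second case (it should be $\tilde a_{m'}\geq(1-\chi)\tilde a_m$ with $m'=\zeta m$, as your subsequent chain correctly assumes), and your $K_\chi=m_j/p$ only satisfies $K_\chi\geq T/p$ rather than $K_\chi\geq T$ — start the chain at the smallest admissible $m_0\geq pT$ and adjust $V_{\chi,\zeta}$ accordingly.
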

\begin{proof} For $\chi>0$ let $k\in \N$ be such that $(1-\chi)^k< \frac{c''}{C_{\a,g}}$ and let $V_{\chi,\zeta}:=\zeta^k$. 
By contradiction, if the statement is not true, then 
\begin{multline*}
c'' T^{1/2}\leq \|S_{T}(g)\|_{C^0(\T^2)}<(1-\chi)\zeta^{-1/2}\|S_{[\zeta T]}(g)\|_{C^0(\T^2)}\leq
\ldots\leq\\
 (1-\chi)^k\zeta^{-k/2}\|S_{[\zeta^k T]}(g)\|_{C^0(\T^2)}\leq C_{\a,g}(1-\chi)^kT^{1/2}.
\end{multline*}
This contradicts the choice of $k$. The proof of the second inequality follows the same lines.
\end{proof}

For $(a,b),(c,d)\in \T^2$ let $d_1((a,b),(c,d))=\|a-c\|$ and $d_2((a,b),(c,d))=\|b-d\|$. 

\begin{lemma}\label{lem:diver} There exists $C'=C'_{\a,g}>0$ such that, for every $(x,y),(x',y')\in \T^2$ and for every $n\in \N$, we have 
$$
|S_{n}(g)(x,y)-S_{n}(g)(x',y')|\leq C'\left(n^{3/2}d_1((x,y),(x',y'))+n^{1/2}d_2((x,y),(x',y'))\right)
$$
\end{lemma}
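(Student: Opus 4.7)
The plan is to express the difference $S_n(g)(x',y') - S_n(g)(x,y)$ as an integral along the straight-line segment joining the two points, and then to reduce the problem to Birkhoff sum bounds for $\partial_1 g$ and $\partial_2 g$ that are already provided by Lemma~\ref{lem:upperbound}. Writing $h = x'-x$ and $v = y'-y$ (after lifting to $\R^2$), the key algebraic identity to verify first is
\[
\Phi^k_{\a,\beta}(x+sh,\,y+sv) \;=\; \Phi^k_{\a,\beta}(x,y) + \bigl(sh,\; s(v+kh)\bigr), \qquad s\in[0,1],
\]
which reflects the fact that perturbing the first coordinate by $h$ shifts the $k$-th iterate of the second coordinate by an additional $kh$. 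By the fundamental theorem of calculus this gives
\[
g\bigl(\Phi^k_{\a,\beta}(x',y')\bigr) - g\bigl(\Phi^k_{\a,\beta}(x,y)\bigr) = \int_0^1 \bigl[h\,\partial_1 g + (v+kh)\,\partial_2 g\bigr]\bigl(\Phi^k_{\a,\beta}(x+sh,\,y+sv)\bigr)\,ds.
\]

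Summing over $k=0,\dots,n-1$ and exchanging sum and integral, the desired difference splits into three pieces: $h\int_0^1 S_n(\partial_1 g)\,ds$, $v\int_0^1 S_n(\partial_2 g)\,ds$, and the linearly weighted sum
\[
h\,\int_0^1 \sum_{k=0}^{n-1} k\,\partial_2 g\bigl(\Phi^k_{\a,\beta}(x+sh,y+sv)\bigr)\,ds.
\]
Both $\partial_1 g$ and $\partial_2 g$ have zero mean on $\T^2$ (being derivatives of periodic functions) and lie in $W^{s-1}(\T^2)$ with $s-1 > 5/2$, so Lemma~\ref{lem:upperbound} applies uniformly in the base point and controls the first two contributions by $C n^{1/2} |h|$ and $C n^{1/2} |v|$ respectively.

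The main obstacle is the third, linearly weighted sum, which is the origin of the asymmetric bound. I will handle it by Abel summation: setting $B_j := S_j(\partial_2 g)(p)$ at the relevant base point $p$, a direct computation gives
\[
\sum_{k=0}^{n-1} k\,\partial_2 g\bigl(\Phi^k_{\a,\beta}(p)\bigr) \;=\; (n-1)B_n - \sum_{j=1}^{n-1} B_j,
\]
and the uniform estimate $|B_j| \leq C j^{1/2}$ from Lemma~\ref{lem:upperbound} bounds both terms by a multiple of $n^{3/2}$, yielding a contribution of size $C n^{3/2} |h|$. Combining the three pieces, identifying $|h|$ with $d_1$ and $|v|$ with $d_2$, produces the desired inequality with a constant $C' = C'_{\a,g}$ depending only on $\a$ (through Lemma~\ref{lem:upperbound}) and on the Sobolev norm of $g$. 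The structural point is that the linear weight $k$ costs exactly one extra factor of $n$ relative to the bare $O(n^{1/2})$ Birkhoff estimate, and this is precisely what converts the naive Lipschitz bound $O(n^2) d_1$ into the sharper $O(n^{3/2}) d_1$.
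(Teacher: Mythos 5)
Your proof is correct and follows essentially the same route as the paper: differentiate the Birkhoff sum via the chain rule (you via the fundamental theorem of calculus along the segment, the paper via the mean value theorem), observe that the $y$-derivative contributes an unweighted Birkhoff sum of order $n^{1/2}$ while the $x$-perturbation propagates linearly into the second coordinate and produces the $k$-weighted sum of $\partial_2 g$, and then tame that weighted sum by Abel summation using the uniform $O(j^{1/2})$ bound from Lemma~\ref{lem:upperbound}. Your version even fixes a small misprint in the paper, which writes $\partial g/\partial x$ instead of $\partial g/\partial y$ in the $k$-weighted term; the estimate is unaffected since both derivatives obey the same Birkhoff bound.
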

\begin{proof} By the mean value theorem for $S_{n}(g)$, we have for some $\theta_n\in \T^2$
\begin{equation}\label{eq:mvt}
|S_{n}(g)(x,y)-S_{n}(g)(x',y')|=\left|\frac{\partial S_{n}(g)}{\partial x}(\theta_n)(x-x')+
\frac{\partial S_{n}(g)}{\partial y}(\theta_n)(y-y')\right|.
\end{equation}
By the chain rule and Lemma \ref{lem:upperbound}
$$
|\frac{\partial S_{n}(g)}{\partial y}(\theta_n)|=|S_n(\frac{\partial g}{\partial y})(\theta_n)|\leq C_{\a,g}n^{1/2}. 
$$
Moreover, by the chain rule, we have 
$$
|\frac{\partial S_{n}(g)}{\partial x}(\theta_n)|\leq 
|S_n(\frac{\partial g}{\partial x})(\theta_n)|+
|\sum_{i=0}^{n-1}i\frac{\partial g}{\partial x}(\Phi_{\a,\beta} ^i\theta_n)|
$$
and by summation by parts
$$
|\sum_{i=0}^{n-1}i\frac{\partial g}{\partial x}(\Phi_{\a,\beta} ^i\theta_n)|\leq 
|nS_n(\frac{\partial g}{\partial x})(\theta_n)|+|\sum_{r=0}^{n-1}S_r(\frac{\partial g}{\partial x})(\theta_n)|.
$$
By Lemma \ref{lem:upperbound}, for some $C>0$,
$$
|nS_n(\frac{\partial g}{\partial x})(\theta_n)|<Cn^{3/2}\,\,\text{ and } \,\,
|\sum_{r=0}^{n-1}S_r(\frac{\partial g}{\partial x})(\theta_n)|<Cn^{3/2}.
$$
Using the above estimates in \eqref{eq:mvt} finishes the proof.
\end{proof}

\begin{lemma}\label{lem:lowbound}Fix $q\in \N$. For every $\eta>0$  there exists $D_\eta>1$ such that for every $n,m\in \N$ and every $(x,y)\in \T^2$, we have 
$$
\max_{i\in\{m,\dots,m+D_\eta n\}}|S_{n}(g)(\Phi_{\a,\beta}^{iq}(x,y))|\geq (1-\eta)\|S_{n}(g)\|_{C^0(\T^2)}.
$$
\end{lemma}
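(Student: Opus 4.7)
The strategy is to reduce, via the Lipschitz control of Lemma~\ref{lem:diver} combined with the lower bound of Lemma~\ref{lem:gro}, to a fine-scale equidistribution problem for the skew-shift $\Phi_{\a,\beta}^q$ on $\T^2$, and to solve the latter by invoking quantitative equidistribution estimates available under the bounded-type hypothesis on $\a$.

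Let $M_n := \|S_n(g)\|_{C^0(\T^2)}$ and let $(x^*, y^*) \in \T^2$ be a maximizer of $|S_n(g)|$. Setting $\delta_1 := \eta c''/(2C' n)$ and $\delta_2 := \eta c''/(2C')$, Lemma~\ref{lem:diver} together with $M_n \geq c'' n^{1/2}$ (Lemma~\ref{lem:gro}) implies that $|S_n(g)(a,b)| \geq (1-\eta) M_n$ on the anisotropic rectangle $R := B(x^*, \delta_1) \times B(y^*, \delta_2) \subset \T^2$, which has area $\sim \eta^2/n$. Hence it is enough to find some $i \in [m, m+D_\eta n]$ with $\Phi^{iq}(x,y) \in R$, for some $D_\eta$ depending only on $\eta$.

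The first-coordinate condition is a pure circle-rotation problem: since $\a$ (hence $q\a$) is of bounded type, the three-distance theorem produces an arithmetic progression $\{i_0 + jr\}_{j=0}^{J-1} \subset [m, m+D_\eta n]$ of length $J \gtrsim \sqrt{D_\eta\eta}$, with $r\sim n/\eta$ a convergent denominator of $q\a$, along which $\|x+iq\a-x^*\| < \delta_1$; bounded-type controls the cumulative drift $j\|rq\a\|$ throughout the progression. Along this AP, the second coordinate $\psi(i_0 + jr) := y + (i_0+jr)q(x+\beta) + \binom{(i_0+jr)q}{2}\a$ reduces modulo $1$ to a quadratic polynomial $\psi(i_0) + jA + j^2 B$ in $j$ with $B := r^2 q^2 \a / 2 \pmod 1$; a quantitative Weyl-type equidistribution statement then shows that, for $J$ large enough depending only on $\eta$, this quadratic sequence hits the arc of length $2\delta_2$ around $y^* - y - \psi(i_0)$ in $\T$, yielding the required $i$.

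The main obstacle is this quadratic Weyl step. Since $r$ is chosen as a convergent denominator of $q\a$, the coefficient $B$ is itself close to a rational with denominator $\sim n/\eta$, so a direct application of Weyl's inequality on a single AP does not yield a $D_\eta$ uniform in $n$. One resolves this by exploiting the bounded-type hypothesis more carefully: either by selecting $r$ adaptively within a block of consecutive convergents of $q\a$ (whose sizes are comparable) to find one for which $B$ is Diophantine in the strong sense Weyl's inequality demands, or by bypassing the AP reduction altogether and applying a two-dimensional Erdos--Turan--Koksma estimate to a Fej\'er-type smoothing of $\chi_R$, estimating the Weyl sums of the iterated skew-shift $\Phi^q$ directly via Weyl's inequality for the associated mixed linear-quadratic exponential sums. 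Ensuring uniformity of the resulting $D_\eta$ in $n$, $m$, and $(x,y)$ pins down the Diophantine bookkeeping at the technical heart of the argument.
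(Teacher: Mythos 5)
Your reduction is exactly the one the paper uses: Lipschitz control from Lemma~\ref{lem:diver} plus the lower bound from Lemma~\ref{lem:gro} shrink the problem to showing that $D_\eta n$ iterates of $\Phi_{\a,\beta}^q$ starting from $(x,y)$ (shifted by $m$, which is harmless by translation-invariance of the dynamics) land in an anisotropic rectangle of dimensions $\sim(\eta^2/n)\times\eta^2$ around a maximizer of $|S_n(g)|$. That is precisely the paper's argument, which then simply asserts, as a consequence of the bounded-type hypothesis, that the orbit of length $D_\eta n$ is $(\eta^2/n,\eta^2)$-dense in $\T^2$ and gives no further detail.

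The part of your proposal that goes beyond the paper --- the attempt to actually prove the anisotropic fine-scale equidistribution --- is incomplete: you identify a ``quadratic Weyl obstacle'' and offer two possible ways around it without carrying either through, so as written this is a sketch rather than a proof. Moreover, the obstacle you worry about is, I believe, not actually there. You take $r$ to be (roughly) a convergent denominator $q_k$ of $q\a$, and you claim $B:=r^2q^2\a/2$ is ``close to a rational with denominator $\sim n/\eta$''. But writing $q_k\,q\a=p_k+\epsilon_k$ with $|\epsilon_k|\asymp 1/q_{k+1}$, one gets $q_k^2\,q\a=q_kp_k+q_k\epsilon_k$, so the distance from $q_k^2\,q\a$ to the nearest integer is $\asymp q_k/q_{k+1}$, which under bounded type is pinned between two constants depending only on $\a$; it is \emph{not} small. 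This is exactly what makes the quadratic sequence $j\mapsto \psi(i_0)+jA+j^2B$ sweep out the circle at a definite rate per step, giving $\eta^2$-density after $O_\eta(1)$ steps of the arithmetic progression, uniformly in $n$, $m$ and the starting point. In other words, the bounded-type hypothesis already supplies the Diophantine lower bound on $B$ that Weyl's inequality needs; neither the adaptive choice of $r$ among a block of convergents nor the two-dimensional Erd\H{o}s--Tur\'an--Koksma detour is required. If you replace your vague worry with this explicit lower bound on $\Vert q_k^2\,q\a\Vert$ and finish the quadratic coverage estimate, your argument becomes a genuine proof of the equidistribution fact the paper takes as given.
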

\begin{proof} Since $\a$ is of bounded type, there exists $D_\eta>1$ such that for every $(x,y)\in \T^2$ and every $n\in \N$, the orbit $\{\Phi_{\a,\beta}^{iq}(x,y)\}_{i=0}^{D_\eta n}$ is $(\frac{\eta^2}{n},\eta^2)$-dense in $\T^2$. Notice that if $(a,b)\in \T^2$ is any point such that 
$|S_n(g)(a,b)|=\|S_{n}(g)\|_{C^0(\T^2)}$ and $\|a-c\|\leq \frac{\eta}{n}$ and $\|b-d\|\leq \eta^2$, then by Lemma \ref{lem:diver}, 
$|S_n(g)(a,b)-S_n(g)(c,d)|\leq 2C'\eta^2 n^{1/2}$ which together with Lemma \ref{lem:gro} finishes the proof is $\eta>0$ is small enough.
\end{proof}

Recall that $(q_n)$ denotes the sequence of denominators of $\a$. The following simple lemma is a consequence of the pigeonhole principle:

\begin{lemma}\label{lem:zar} Fix $p,q\in \N$. For every $\eta>0$ there exists $L_\eta>0$ such that for every $(x,y),(z,w)\in \T^2$ and for every $n\in \N$, there exists $l_{1,n},l_{2,n}\in\{0,\ldots,L_\eta\}$, $l_{1,n}\neq l_{2,n}$ such that 
$$
d_2(\Phi_{\a,\beta}^{ql_{1,n}q_n}(x,y),(\Phi_{\a,\beta}^{ql_{2,n}q_n}(x,y)))<\eta \quad \text{and}\quad d_2(\Phi_{\a,\beta}^{pl_{1,n}q_n}(z,w),(\Phi_{\a,\beta}^{pl_{2,n}q_n}(z,w)))<\eta.
$$
\end{lemma}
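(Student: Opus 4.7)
The statement reduces immediately to a pigeonhole argument in $\T^2$, and no fine number-theoretic property of $\a$, $\b$, $p$, $q$, or of $q_n$ is required; the constant $L_\eta$ will depend only on $\eta$.

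The plan is to consider, for each $l\in\{0,1,\ldots,L_\eta\}$, the pair of second coordinates
\[
a_l := \pi_2\bigl(\Phi_{\a,\beta}^{qlq_n}(x,y)\bigr)\,,\qquad b_l := \pi_2\bigl(\Phi_{\a,\beta}^{plq_n}(z,w)\bigr)\,,
\]
where $\pi_2:\T^2\to\T$ is projection onto the second factor. Since $d_2$ is by definition the distance between second coordinates, establishing both inequalities in the statement is equivalent to exhibiting distinct $l_{1,n},l_{2,n}\in\{0,\ldots,L_\eta\}$ for which $(a_{l_{1,n}},b_{l_{1,n}})$ and $(a_{l_{2,n}},b_{l_{2,n}})$ are $\eta$-close in the product metric on $\T\times\T$.

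The main (and only) step is pigeonhole. Pick any integer $N_\eta\geq\lceil 1/\eta\rceil^2$ and partition $\T^2=\T\times\T$ into $N_\eta$ boxes, each of diameter at most $\eta$ in each coordinate. Setting $L_\eta:=N_\eta$, we have $L_\eta+1>N_\eta$ points $(a_l,b_l)$ in $\T^2$, so by the pigeonhole principle there exist $l_{1,n}\neq l_{2,n}$ in $\{0,\ldots,L_\eta\}$ lying in the same box. For this pair,
\[
\|a_{l_{1,n}}-a_{l_{2,n}}\|<\eta\qquad\text{and}\qquad \|b_{l_{1,n}}-b_{l_{2,n}}\|<\eta\,,
\]
which by construction of $a_l,b_l$ is exactly the conclusion of the lemma.

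There is essentially no obstacle: the constant $L_\eta$ is determined solely by $\eta$, and it is automatic that it does not depend on $n$, on the base points $(x,y),(z,w)$, or on the fixed integers $p,q$, since the pigeonhole argument treats the finite family $\{(a_l,b_l)\}_{l=0}^{L_\eta}$ abstractly as points of $\T^2$. The only thing worth flagging is that the lemma is stated only for the $d_2$-distance (the second coordinate); if one also wanted $d_1$-control one would have to enlarge $L_\eta$ and invoke a four-dimensional pigeonhole, but this is not needed here.
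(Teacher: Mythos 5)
Your proof is correct and uses essentially the same approach as the paper: a pigeonhole argument on the second coordinates of the orbit points, with $L_\eta\sim\eta^{-2}$. The only cosmetic difference is that you apply a single pigeonhole to the pairs $(a_l,b_l)\in\T\times\T$, whereas the paper runs two sequential pigeonholes (first extracting $[\eta^{-1}]+1$ indices with close $a_l$'s, then pigeonholing the corresponding $b_l$'s); both give the same conclusion with comparable constants.
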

\begin{proof} Let $L_\eta:=2\eta^{-2}$. By the pigeonhole principle there exist positive integers $l_1,\ldots,l_{[\eta^{-1}]+1}\in \{0,\ldots,L_\eta\}$ such that $d_2(\Phi_{\a,\beta}^{qlq_n}(x,y),(\Phi_{\a,\beta}^{ql'q_n}(x,y)))<\eta$ for $l,l'\in\{l_1,\ldots,l_{[\eta^{-1}]+1}\}$. Again by the pigeonhole principle, there exist  positive integers $l_{1,n},l_{2,n}\in \{l_1,\ldots,l_{[\eta^{-1}]+1}\}$ such that
$d_2(\Phi_{\a,\beta}^{pl_{1,n}q_n}(z,w),(\Phi_{\a,\beta}^{pl_{2,n}q_n}(z,w)))<\eta$. This finishes the proof.
\end{proof}

\begin{lemma}\label{lem:span} 
There exists a constant $C'''>0$ such that for any $(a,b),(c,d)\in \T^2$, for every $W,K\in \N$ with $W\leq \|a-c\|^{-1}$, we have 
\begin{multline}
|S_K(g)(\Phi_{\a,\beta}^W(a,b))-S_K(g)(\Phi_{\a,\beta}^{W}(c,d))-WS_K(\frac{\partial g}{\partial y})(\Phi_{\a,\beta}^W(a,b))(a-c)| \\
\leq C'''\left(K^{1/2}W^2\|a-c\|^2 + K^{3/2}\|a-c\|+K^{1/2}\|b-d\|\right).
\end{multline}
\end{lemma}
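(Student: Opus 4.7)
The plan is to decompose the increment $S_K(g)(p) - S_K(g)(q)$, where $p := \Phi_{\a,\beta}^W(a,b)$ and $q := \Phi_{\a,\beta}^W(c,d)$, by inserting an intermediate point that separates a pure $y$-translation of magnitude $W(a-c)$ from a fixed translation by $(a-c, b-d)$ that is independent of $W$. Explicitly, the formula $\Phi_{\a,\beta}^W(x,y) = (x+W\a,\, y+Wx+\binom{W}{2}\a+W\beta)$ yields $p - q = (a-c,\,(b-d) + W(a-c))$, so setting $r := p - (0,\, W(a-c))$ gives $r - q = (a-c, b-d)$ (independent of $W$) and $p - r = (0,\, W(a-c))$, a pure $y$-translation of length $W\|a-c\|\leq 1$ by the hypothesis $W \leq \|a-c\|^{-1}$.

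The next step is to split $S_K(g)(p) - S_K(g)(q) = [S_K(g)(p) - S_K(g)(r)] + [S_K(g)(r) - S_K(g)(q)]$ and estimate each bracket. For the second bracket, Lemma \ref{lem:diver} applied to the displacement $r - q = (a-c, b-d)$ gives directly the bound $C'(K^{3/2}\|a-c\| + K^{1/2}\|b-d\|)$, accounting for the last two summands of the target inequality. For the first bracket, the integrated form of Taylor's theorem in the $y$-direction, after isolating the linear term at the endpoint $p$, yields
\begin{equation*}
S_K(g)(p) - S_K(g)(r) - W(a-c)\,\partial_y S_K(g)(p) = \int_0^{W(a-c)} \bigl[\partial_y S_K(g)(r + (0,t)) - \partial_y S_K(g)(p)\bigr]\, dt\,.
\end{equation*}
Because $\partial_y$ commutes with $\Phi_{\a,\beta}$ (the $y$-derivative of the first coordinate of $\Phi_{\a,\beta}^j$ vanishes), one has $\partial_y^2 S_K(g) = S_K(\partial_y^2 g)$, so Lemma \ref{lem:upperbound} applied to $\partial_y^2 g$ yields $\|\partial_y^2 S_K(g)\|_{C^0(\T^2)} \leq C K^{1/2}$. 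Combined with the bound $|r + (0,t) - p| = W(a-c) - t$ on $\T^2$ (valid precisely because $W(a-c) \leq 1$, so the $y$-segment does not wrap around the circle), the integrand is dominated by $C K^{1/2}(W(a-c) - t)$, and integration produces a remainder of order $K^{1/2}W^2\|a-c\|^2$; noting also that $W(a-c)\,\partial_y S_K(g)(p) = W\,S_K(\partial_y g)(p)\,(a-c)$ yields the first summand.

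The main (and rather modest) obstacle is identifying the correct intermediate point $r$: the $y$-displacement between $\Phi_{\a,\beta}^W(a,b)$ and $\Phi_{\a,\beta}^W(c,d)$ must be split into its $W$-linear part $W(a-c)$ (producing the principal differential $W(a-c)\partial_y S_K(g)(p)$ and a quadratic remainder controlled by the $O(K^{1/2})$ bound on $\partial_y^2 S_K(g)$) and its $W$-independent part $b-d$ (absorbed into Lemma \ref{lem:diver} together with the first-coordinate displacement $a-c$). Once this decomposition is in place, all remaining estimates are a direct consequence of Lemmas \ref{lem:upperbound} and \ref{lem:diver}, and the hypothesis $W \leq \|a-c\|^{-1}$ enters only to guarantee that the quadratic Taylor remainder is a genuine improvement over the trivial linear bound.
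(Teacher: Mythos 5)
Your proof is correct and takes essentially the same approach as the paper: your intermediate point $r$ is exactly the paper's $\theta'_W$, the $y$-Taylor expansion with remainder bounded via $\partial_y^2 S_K(g) = S_K(\partial_y^2 g)$ and Lemma \ref{lem:upperbound} matches the paper's second-order term, and the $(a-c,\,b-d)$ displacement is controlled by Lemma \ref{lem:diver}. The only cosmetic difference is that you apply Lemma \ref{lem:diver} once to the combined displacement where the paper inserts a second intermediate point $\theta_W$ and applies it separately to the $y$-shift $b-d$ and the $x$-shift $a-c$.
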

\begin{proof}Let $\theta_W=(a+W\a, d+Wc+\frac{W(W-1)\a}{2})$ and $\theta'_{W}=(a+W\a,b+Wc+\frac{W(W-1)\a}{2})$. 
Then 
\begin{multline*}
S_K(g)(\Phi_{\a,\beta}^W(a,b))-S_K(g)(\Phi_{\a,\beta}^{W}(c,d))=
\left(S_K(g)(\Phi_{\a,\beta}^W(a,b))-S_K(g)(\theta'_W)\right)+\\
\left(S_K(g)(\theta'_W)-S_K(g)(\theta_W)\right)+
\left(S_K(g)(\theta_W)-S_k(g)(\Phi_{\a,\beta}^W(c,d))\right).
\end{multline*}
By Lemma \ref{lem:diver}, we have 
$$
|S_K(g)(\theta'_W)-S_K(g)(\theta_W)|\leq C'K^{1/2}\|b-d\|
$$
and 
$$
|S_K(g)(\theta_W)-S_K(g)(\Phi_{\a,\beta}^W(c,d))|\leq C'K^{3/2}\|a-c\|.
$$
Finally, by Taylor formula and the chain rule, for some $\theta_K\in \T^2$,
\begin{multline}
S_K(g)(\Phi_{\a,\beta}^W(a,b))-S_K(g)(\theta'_W) \\=S_K(\frac{\partial g}{\partial y})(\Phi_{\a,\beta}^W(a,b))W(a-c)+S_K(\frac{\partial^2 g}{\partial^2 y})(\theta_K)(W(a-c))^2, 
\end{multline}
and by Lemma \ref{lem:upperbound}, we get 
$$
S_K(\frac{\partial^2 g}{\partial y^2})(\theta_K)(W(a-c))^2\leq C''
K^{1/2}W^2\|a-c\|^2. 
$$
This finishes the proof.
\end{proof}

\subsection{Estimates of second order terms}

In what follows $p,q\in \N$, $\zeta=\frac{q}{p}$, and a zero-mean non-coboundary $h\in W^s(M)$, $s\geq 5/2$ is fixed. We assume WLOG that $q>p$, that is $\zeta>1$.

The following lemma is important, it crucially uses the fact that $p\neq q$:
\begin{lemma}\label{lem:linsplit} There exist $D',d'>0$
such that, for any $(x,y), (z,w)\in \T^2$ and any $T>0$, for some $n'\in [0,D'T]$, we have
\begin{equation}\label{eq:linsplit}
|S_{n'}(h)(z,w)-\zeta^{-1/2}S_{[\zeta n']}(h)(x,y)|\geq d'T^{1/2}.
\end{equation}
\end{lemma}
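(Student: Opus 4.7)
I argue by contradiction. Suppose that for some $(x,y), (z,w) \in \T^2$ and arbitrarily large $T>0$, one has, for every $n' \in [0, D'T]$,
\[
|F(n')| := \bigl|S_{n'}(h)(z,w) - \zeta^{-1/2} S_{[\zeta n']}(h)(x,y)\bigr| < d' T^{1/2},
\]
with $D', d'>0$ to be selected. Specialising to $n' = pi$, for which $\zeta \cdot pi = qi \in \Z$ and hence $[\zeta(pi+m)] - [\zeta(pi)] = [\zeta m]$, the cocycle identity, after differencing the bound at $n' = pi$ and $n' = pi + m$, yields for every $i,m \geq 0$ with $pi + m \leq D'T$,
\begin{equation}\label{plan:reduce}
\bigl|S_m(h)(\Phi_{\a,\beta}^{pi}(z,w)) - \zeta^{-1/2} S_{[\zeta m]}(h)(\Phi_{\a,\beta}^{qi}(x,y))\bigr| < 2 d' T^{1/2}.
\end{equation}

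Next, I choose $m = n_\chi$ from Lemma~\ref{lem:supr} applied to $g = h$ with a small $\chi>0$: there is $n_\chi \in [T, V_{\chi,\zeta}T]$ such that $\|S_{n_\chi}(h)\|_{C^0}$ and $\zeta^{-1/2}\|S_{[\zeta n_\chi]}(h)\|_{C^0}$ are both of order $T^{1/2}$ by Lemmas~\ref{lem:upperbound}--\ref{lem:gro}. Setting $u_i := S_{n_\chi}(h)(\Phi_{\a,\beta}^{pi}(z,w))$, $v_i := S_{[\zeta n_\chi]}(h)(\Phi_{\a,\beta}^{qi}(x,y))$, and $I := \lfloor (D'T - n_\chi)/p\rfloor \asymp T$, squaring \eqref{plan:reduce} with $m = n_\chi$ and summing over $i \in [0, I]$ gives
\[
\|u - \zeta^{-1/2} v\|_2^2 < 4(d')^2 I T,
\]
while by unique ergodicity of $\Phi_{\a,\beta}^p$ and $\Phi_{\a,\beta}^q$ on $\T^2$ (with effective rate from the bounded type assumption on $\a$) together with the $L^2$ lower bound of Lemma~\ref{lemma:L2bounds}, one has $\|u\|_2^2 \asymp \|v\|_2^2/\zeta \asymp I T$.

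The main obstacle is then to close the argument via a \emph{non-alignment} bound on the cross term. Expanding $\|u - \zeta^{-1/2}v\|_2^2 = \|u\|_2^2 + \zeta^{-1}\|v\|_2^2 - 2\zeta^{-1/2}\langle u, v\rangle$ and comparing with the diagonal estimate, a contradiction will follow provided one proves, uniformly in $(x,y), (z,w)$, and $T$,
\[
\langle u, v \rangle \leq (1 - \varepsilon)\,\|u\|_2\|v\|_2 \quad \text{for some fixed } \varepsilon > 0,
\]
since then $\|u - \zeta^{-1/2}v\|_2^2 \geq 2\varepsilon \|u\|_2^2 \gtrsim \varepsilon\, IT$. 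Cauchy--Schwarz alone is too weak -- it saturates exactly when $u, v$ are proportional -- so this is the step where the hypothesis $p \neq q$ enters crucially. Opening both Birkhoff sums reduces $\langle u, v \rangle$ to a sum of inner averages $\frac{1}{I}\sum_i h(\Phi_{\a,\beta}^{pi+k}(z,w))\, h(\Phi_{\a,\beta}^{qi+l}(x,y))$ for the joint skew-shift action $(\Phi_{\a,\beta}^p, \Phi_{\a,\beta}^q)$ on $\T^2 \times \T^2$; a Fourier expansion of $h$ shows that the only non-oscillatory modes $(m_1,n_1;m_2,n_2)$ satisfy the quadratic resonance $n_1 p^2 + n_2 q^2 = 0$ plus a linear resonance in $(m_1, m_2)$, a sparse sublattice when $p \neq q$. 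Since $h \in W^s(\T^2)$, $s\geq 5/2$, is not a coboundary, its Fourier spectrum carries non-trivial mass off the resonance, which produces the required gap $\varepsilon>0$. At $p = q$ and $(x,y)=(z,w)$ the resonance degenerates to the antidiagonal and $\langle u, v\rangle = \|u\|_2\|v\|_2$, mirroring the trivial failure of the lemma in that case.

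Finally, combining the diagonal lower bound with the non-alignment bound gives $\|u - \zeta^{-1/2}v\|_2^2 \geq \varepsilon'\,IT$ for some $\varepsilon'>0$; choosing $d'$ with $4(d')^2 < \varepsilon'$ and $D'$ large enough that all of the above estimates are valid then contradicts the working hypothesis for sufficiently large $T$, completing the proof.
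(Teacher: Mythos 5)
Your plan and the paper's proof are genuinely different in spirit: the paper argues directly and deterministically, combining Lemma~\ref{lem:supr} (to match the $L^\infty$ sizes of $S_{pK}(\partial_y h)$ and $\zeta^{-1/2}S_{qK}(\partial_y h)$), the pigeonhole Lemma~\ref{lem:zar} (to find a shift $Q = l_n q_n$ that moves both the $p$-orbit and the $q$-orbit by essentially the same small vertical amount while stretching the $x$-coordinate by $l_n\|q_n\a\|$), Lemma~\ref{lem:lowbound} (to locate a time $W$ where $S_{qK}(\partial_y h)$ is near its sup), and Lemma~\ref{lem:span} (a second-order Taylor estimate). The gap then materialises because the quadratic-in-shift coefficient $q^2$ on the $q$-side strictly exceeds $p^2\zeta^{-1}=p^3/q$ on the $p$-side as soon as $q>p$, which gives a positive constant $d_{p,q,\eta}$. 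This is a pointwise construction, and the hypothesis $p\neq q$ enters via the elementary inequality $q^3 > p^3$.

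Your argument by contradiction, through an $L^2$ orbit average over $i$, has a genuine gap at exactly the step you flag as "the main obstacle." The non-alignment inequality
\[
\langle u, v \rangle \leq (1-\varepsilon)\,\|u\|_2\,\|v\|_2
\]
is asserted with a sketch based on Fourier resonances of the product skew-shift $(\Phi_{\a,\beta}^{p},\Phi_{\a,\beta}^{q})$, but it is not proved, and proving it \emph{uniformly in the base points $(x,y), (z,w)$ and in $T$} is a substantial result in its own right — arguably comparable in difficulty to the lemma you are trying to prove. The paper sidesteps any global Fourier/spectral argument precisely because this uniform control over orbits is delicate. A second, related problem is your diagonal lower bound $\|u\|_2^2 \asymp IT$: this requires converting the space-average lower bound of Lemma~\ref{lemma:L2bounds} into an orbit-segment average of $|S_{n_\chi}(h)|^2$ along $\{\Phi_{\a,\beta}^{pi}(z,w)\}_{i\leq I}$ with $I\asymp n_\chi \asymp T$. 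Since $|S_{n_\chi}(h)|^2$ has $C^0$ norm $\asymp T$ and derivative norms that grow polynomially in $T$, the standard discrepancy estimates along an orbit of length merely comparable to $T$ do not obviously beat the variation, so even this lower bound needs a careful effective-equidistribution argument which is not supplied. Until the non-alignment estimate and the orbit-to-space $L^2$ comparison are established with the required uniformity, the proposal does not yield a proof; the Fourier resonance observation (that $n_1 p^2 + n_2 q^2 = 0$ is a sparse condition when $p\neq q$) is a good heuristic but is where the real work would begin, not end.
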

\begin{proof} We will consider only numbers of the form $n'=pk$, for $k\in \N$. We will show that there exists $D',d'>0$ such that 
\begin{multline}\label{mu:show}
|\left(S_{pK}(h)(\Phi_{\a,\beta}^{pW}(z,w))-S_{pK}(h)(\Phi_{\a,\beta}^{pW}\Phi_{\a,\beta}^{pQ}(z,w))\right)\\
-\zeta^{-1/2}
\left(S_{qK}(h)(\Phi_{\a,\beta}^{qW}x,y)-S_{qK}(h)(\Phi_{\a,\beta}^{qW}\Phi_{\a,\beta}^{qQ}(x,y))\right)
|\geq 16d'T^{1/2},
\end{multline}
for some $K,W,Q\leq D' T$. This will finish the proof since by cocycle identity 

\begin{multline*}
S_{pK}(h)(\Phi_{\a,\beta}^{pW}(z,w))-S_{pK}(h)(\Phi_{\a,\beta}^{p(W+Q)}(z,w))=\\
S_{p(W+Q)}(h)(z,w)-S_{p(K+W+Q)}(h)(z,w)+S_{p(K+W)}(h)(z,w)-
S_{pW}(h)(z,w)
\end{multline*}
and the same splitting for $S_{qK}(h)(\cdot)$. Hence \eqref{eq:linsplit} then holds for $n'$ being one of $\{pW,p(K+W),p(W+Q),p(K+W+Q)\}$.

Let $\eta>0$ be small. By Lemma \ref{lem:supr}, let  $K\in [T,V_{\eta,\zeta}T]$ be such that 
\begin{equation}\label{eq2}
\left\|S_{qK}(\frac{\partial h}{\partial y})\right\|_{C^0(\T^2)}\geq (1-\eta)\zeta^{1/2}\left\|S_{pK}(\frac{\partial h}{\partial y})\right\|_{C^0(\T^2)}.
\end{equation}
Let now $n=n(K,\eta)>0$ be the smallest number such that 
$\frac{\eta^{3/2}q_{n+1}}{L_\eta}\geq  D_\eta qK$ ($L_\eta$ from Lemma \ref{lem:zar} and $D_\eta>1$ from Lemma \ref{lem:lowbound}). Let $l_{1,n},l_{2,n}\in \{0,\ldots,L_\eta\}$ be as in Lemma \ref{lem:zar} for $n,x,y,z,w$ and let $l_n=l_{2,n}-l_{1,n}$. Denote $(\bar{x},\bar{y})=\Phi_{\a,\beta}^{ql_{1,n}q_n}(x,y)$ and 
$(\bar{z},\bar{w})=\Phi_{\a,\beta}^{pl_{1,n}q_n}(z,w)$.

Notice that by definition,
$2\frac{\eta^{1/2}q_{n+1}}{l_n}-\frac{\eta^{1/2}q_{n+1}}{l_n}\geq D_\eta qK$, hence
 by Lemma \ref{lem:lowbound} there exists $W\in [\frac{\eta^{1/2}q_{n+1}}{l_n},2\frac{\eta^{1/2}q_{n+1}}{l_n}]$ such that 
\begin{equation}\label{eq1}
|S_{qK}(\frac{\partial h}{\partial y})(\Phi_{\a,\beta}^{qW}(\bar{x},\bar{y}))|\geq (1-\eta)\|S_{qK}(\frac{\partial h}{\partial y})\|_{C^0(\T^2) }\;\; (\geq c' q^{1/2}K^{1/2}).
\end{equation}
Therefore 
\begin{equation}\label{eq:000}
|S_{qK}(\frac{\partial h}{\partial y})(\Phi_{\a,\beta}^{qW}(\bar{x},\bar{y}))|
q^2Wl_n\|q_n\a\|\geq c'q^{5/2}K^{1/2}Wl_n\|q_n\a\|. 
\end{equation}

Since $Wl_n\|q_n\a\|\in [\eta^{1/2}/2, 2\eta^{1/2}]$, if $\eta\ll \min(p,q)$, we have for $b\in\{p,q\}$
$$
 \eta^{1/10} c'q^{5/2}K^{1/2}Wl_n\|q_n\a\|\geq (bK)^{1/2}(Wb^2l_n\|q_n\a\|)^2,
$$
 similarly 
$$
\eta^{1/10} c'q^{5/2}K^{1/2}Wl_n\|q_n\a\| \geq (bK)^{1/2}\eta.
$$
and since $W\geq \frac{\eta^{1/2}q_{n+1}}{2l_n}\geq \frac{\eta^{-1} D_\eta qK}{2}$, for $\eta>0$ sufficiently small, we have 
$$
\eta^{1/10} c'q^{5/2}K^{1/2}Wl_n\|q_n\a\|\geq  (bK)^{3/2}bl_n\|q_n\a\|.
$$

Notice that by Lemma~\ref{lem:span} for $(a,b)=(\bar{x},\bar{y})$ and $(c,d)=\Phi_{\a,\beta}^{ql_nq_n}(\bar{x},\bar{y})$, by formula~\eqref{eq:000} and the three above equations for $b=q$, we have 
\begin{multline*}
\vert S_{qK}(h)(\Phi_{\a,\beta}^{qW}(\bar{x},\bar{y})) - S_{qK}(h)(\Phi_{\a,\beta}^{qW}(\Phi_{\a,\beta}^{ql_nq_n}(\bar{x},\bar{y})))\vert \\ \geq
\vert S_{qK}(\frac{\partial h}{\partial y})(\Phi_{\a,\beta}^{qW}(\bar{x},\bar{y}))\vert   q^2W l_n \|q_n \a \| \\ 
- C''' \left( (q K)^{1/2}(Wq^2 l_n \Vert q_n \a \Vert )^2+(q K)^{1/2} \etaĘ + (q K)^{3/2} q l_n\Vert q_n \a \Vert \right)
 \\ \geq (1-3\eta^{1/10}C''')\left| S_{qK}(\frac{\partial h}{\partial y})(\Phi_{\a,\beta}^{qW}(\bar{x},\bar{y}))\right| q^2Wl_n\|q_n\a\|.
\end{multline*}
Moreover by \eqref{eq1} and \eqref{eq2}, we have
$$
|S_{qK}(\frac{\partial h}{\partial y})(\Phi_{\a,\beta}^{qW}(\bar{x},\bar{y}))|
\geq (1-\eta)^2\zeta^{1/2}
|S_{pK}(\frac{\partial h}{\partial y})(\Phi_{\a,\beta}^{pW}(\bar{z},\bar{w}))|.
$$

Therefore, by Lemma \ref{lem:span}, \eqref{eq:000} and the three above equations for $b=p$, we have
\begin{multline*}
|S_{pK}(h)(\Phi_{\a,\beta}^{pW}(\bar{z},\bar{w}))-
S_{pK}(h)(\Phi_{\a,\beta}^{pW}(\Phi_{\a,\beta}^{pl_nq_n}(\bar{z},\bar{w})))| \\  \leq
|S_{pK}(\frac{\partial h}{\partial y})(\Phi_{\a,\beta}^{pW}(\bar{z},\bar{w}))|p^2Wl_n\|q_n\a\| \\ +
C'''\left( (pK)^{1/2}(p^2Wl_n\|q_n\a\|)^2 +(pK)^{1/2}\eta   +
(pK)^{3/2}pl_n\|q_n\a\|\right) \\  \leq (p^2 (1-\eta)^{-2}\zeta^{-1/2} + 3\eta^{1/10}C''''  q^2)\left|S_{qK}(\frac{\partial h}{\partial y})
(\Phi_{\a,\beta}^{qW}(\bar{x},\bar{y}))\right|Wl_n\|q_n\a\|
\end{multline*}

Let $d_{p,q,\eta}:= q^2-  p^2 (1-\eta)^{-2}\zeta^{-1}  - C''''  (3+ \zeta^{-1/2})\eta^{1/10} q^2 $; then $d_{p,q,\eta}>0$ if  $\eta>0$ is small enough (since $q>p$). 

We have by the above
\begin{multline*}
|S_{qK}(h)(\Phi_{\a,\beta}^{qW}(\bar{x},\bar{y}))-
S_{qk}(h)(\Phi_{\a,\beta}^{qW}(\Phi_{\a,\beta}^{ql_nq_n}(\bar{x},\bar{y})))|\\
-\zeta^{-1/2}|S_{pK}(h)(\Phi_{\a,\beta}^{pW}(\bar{z},\bar{w}))-
S_{pK}(h)(\Phi_{\a,\beta}^{pW}(\Phi_{\a,\beta}^{pl_nq_n}(\bar{z},\bar{w})))|\geq \\
d_{p,q,\eta}\left|S_{qK}(\frac{\partial h}{\partial y})(\Phi_{\a,\beta}^{qW}(\bar{x},\bar{y}))\right|Wl_n\|q_n\a\|\geq\\
d'_{p,q,\eta}(qK)^{1/2}Wl_n\|q_n\a\|\geq d''_{p,q,\eta}K^{1/2}.
\end{multline*}
Since $K\geq T$ and by the definition of $(\bar{x},\bar{y}),(\bar{z},\bar{w})$, this finishes the proof of \eqref{mu:show} with  $K=K,W=W+l_{1,n}q_n$ and $Q=l_nq_n$,
hence the proof of Lemma~\ref{lem:linsplit} is complete.
\end{proof}

For $(x,y),(x,y')\in \T^2$, let $\delta_y=\vert y-y'\vert$.
We also have the following lemma:

\begin{lemma}\label{lem:vertsplit}
There exist $D''=D''(\a,f,p,q)>0$, $d''=d''(\a,f,p,q)>0$ such that for every
$(x,y),(x,y'),(z,w),(z,w')\in \T^2$, if $T':=\min(\delta_y^{-2}, \delta_w^{-2})\gg 1$, then there exists $s\in[0,D''T']$ such that 
\begin{equation}\label{eq:vertsplit}
|\left(S_{s}(p^{-1}f)(z,w)-S_{s}(p^{-1}f)(z,w')\right)-
\left(S_{[\zeta s]}(q^{-1}f)(x,y)-S_{[\zeta s]}(q^{-1}f)(x,y')\right)|
\geq d''.
\end{equation}
\end{lemma}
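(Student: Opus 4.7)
The plan is to mirror the proof of Lemma~\ref{lem:linsplit}, replacing its horizontally separated points by a first-order Taylor expansion in the $y$-variable. Since $\Phi_{\a,\beta}^k$ preserves vertical distances, Taylor expansion gives
\[
S_s(f)(x,y) - S_s(f)(x,y') = (y-y')\,S_s(\partial_y f)(x,y) + O\!\big(\delta_y^2\|S_s(\partial_y^2 f)\|_{C^0} + s\delta_y^3\big),
\]
whose error is $O((T')^{-1/2})$ for $s\leq D''T'\leq D''\delta_y^{-2}$ by Lemma~\ref{lem:upperbound}, with an analogous expansion on the $(z,w,w')$-side. This reduces the lemma to producing $s\in[0,D''T']$ with
\[
\Big|\tfrac{\delta_w}{p}\,S_s(\partial_y f)(z,w) - \tfrac{\delta_y}{q}\,S_{[\zeta s]}(\partial_y f)(x,y)\Big|\,\geq\, 2d''.
\]

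Specialising to $s=pK$ (so $[\zeta s]=qK$) and using the cocycle identity $S_{pK}(f)\circ\Phi^{pW}=S_{p(K+W)}(f)-S_{pW}(f)$, it suffices to produce integers $K,W\geq 1$ for which the shifted quantity
\[
\Xi(K,W)\,:=\,\tfrac{\delta_w}{p}\,S_{pK}(\partial_y f)(\Phi^{pW}(z,w)) - \tfrac{\delta_y}{q}\,S_{qK}(\partial_y f)(\Phi^{qW}(x,y))
\]
satisfies $|\Xi(K,W)|\geq 8d''$: indeed $\Xi(K,W)$ equals the target of the first paragraph at $s=p(W+K)$ minus that at $s=pW$ up to negligible Taylor errors, so the triangle inequality yields one of these two $s$-values as a witness.

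The natural first attempt is to apply Lemma~\ref{lem:supr} to $\partial_y f$ (with $T:=T'/D''$) to select $K$ with $\|S_{qK}(\partial_y f)\|_{C^0}\geq(1-\eta)\zeta^{1/2}\|S_{pK}(\partial_y f)\|_{C^0}$, and then Lemma~\ref{lem:lowbound} to pick $W\leq D_\eta K$ at which the $q$-side sum in $\Xi$ attains $(1-\eta)$ of its $C^0$-norm. Combined with Lemma~\ref{lem:gro}, this yields $|\Xi(K,W)|\geq \tfrac{\|S_{qK}(\partial_y f)\|_{C^0}}{q}\big[(1-\eta)\delta_y-(1-\eta)^{-1}\zeta^{1/2}\delta_w\big]\gtrsim 1$, but \emph{only} in the ``unbalanced'' regime $\delta_y/\delta_w>\zeta^{1/2}$ by a definite margin. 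A symmetric argument, swapping the roles of the two sides and using Part~1 of Lemma~\ref{lem:supr}, covers $\delta_y/\delta_w<\zeta^{1/2}$ by a definite margin.

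The main obstacle will be the balanced regime $\delta_y\approx\zeta^{1/2}\delta_w$, in which the two terms in $\Xi$ can cancel. To handle it I plan to repeat the second-order scheme of Lemma~\ref{lem:linsplit}: first use Lemma~\ref{lem:zar} to find $l_{1,n},l_{2,n}\in\{0,\dots,L_\eta\}$ and set $l_n:=l_{2,n}-l_{1,n}$, $(\bar x,\bar y):=\Phi_{\a,\beta}^{ql_{1,n}q_n}(x,y)$, $(\bar z,\bar w):=\Phi_{\a,\beta}^{pl_{1,n}q_n}(z,w)$ (and likewise the primed points, with vertical displacements $\delta_y,\delta_w$ preserved). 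Then form the finite difference of $\Xi$ between $(\bar x,\bar y),(\bar z,\bar w)$ and their images under $\Phi^{ql_nq_n}$ and $\Phi^{pl_nq_n}$ respectively; Lemma~\ref{lem:span} applied to $\partial_y f$ with horizontal displacement $\sim l_n\|q_n\a\|$ extracts the leading term
\[
W\, l_n\|q_n\a\|\,\Big[q\delta_y\,S_{qK}(\partial_y^2 f)(\Phi^{qW}(\bar x,\bar y)) - p\delta_w\,S_{pK}(\partial_y^2 f)(\Phi^{pW}(\bar z,\bar w))\Big].
\]
Selecting $K$ and $W\in[\eta^{1/2}q_{n+1}/l_n,2\eta^{1/2}q_{n+1}/l_n]$ via Lemmas~\ref{lem:supr} and~\ref{lem:lowbound} applied to $\partial_y^2 f$, the critical factor becomes $(1-\eta)q\delta_y-(1-\eta)^{-1}\zeta^{-1/2}p\delta_w$, which in the balanced regime simplifies to a positive multiple of $q^2-p^2>0$ --- the same $q>p$ mechanism that drives Lemma~\ref{lem:linsplit}. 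A final triangle inequality on the four $\Xi$-values arising from this finite difference, combined with the cocycle reduction of the second paragraph, delivers the desired $s\in[0,D''T']$.
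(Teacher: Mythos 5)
Your reduction and case split coincide with the paper's: Taylor-expand in the vertical coordinate to replace $S_s(f)(x,y)-S_s(f)(x,y')$ by $(y-y')S_s(\partial_y f)(x,y)$ up to an $O((T')^{-1/2})$ error, then split into the balanced regime $\delta_y\approx\zeta^{1/2}\delta_w$ (within a small multiplicative window $\chi$) and its complement. Your Case~A (unbalanced) is exactly the paper's Case~\textbf{A}, using Lemma~\ref{lem:supr} together with Lemma~\ref{lem:lowbound} to make one of the two weighted Birkhoff sums dominate. The genuine difference is in the balanced regime. You propose to re-run the finite-difference scheme of Lemma~\ref{lem:linsplit} from scratch: apply Lemmas~\ref{lem:zar} and \ref{lem:span} to $\partial_y f$, select $K$ and $W$ via Lemmas~\ref{lem:supr}/\ref{lem:lowbound} applied to $\partial_y^2 f$, and track the weights $\delta_y,\delta_w$ through to the critical factor $(1-\eta)q\delta_y-(1-\eta)^{-1}\zeta^{-1/2}p\delta_w$, which at balance reduces to a positive multiple of $q^2-p^2$. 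The arithmetic checks out, and the regularity cost ($\partial_y^3 f$ through Lemma~\ref{lem:span}) is the same as what Lemma~\ref{lem:linsplit} already consumes when applied to $h=\partial_y f$. The paper reaches the same conclusion in one step: since $p\,\zeta^{1/2}/q=\zeta^{-1/2}$, at approximate balance the weighted expression factors as $|w-w'|\,p^{-1}\bigl[S_s(h)-\zeta^{-1/2}S_{[\zeta s]}(h)\bigr]$ plus an $O(\chi)$ error controlled by Lemma~\ref{lem:upperbound}, so one simply invokes Lemma~\ref{lem:linsplit} for $h=\partial_y f$ as a black box. Your route is therefore correct but re-derives the content of Lemma~\ref{lem:linsplit} with the scalar weights carried through; the paper's factorisation observation lets it avoid that duplication.
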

\begin{proof}
Notice that by Taylor formula and the chain rule, for some $\theta\in \T^2$
\begin{multline*}
S_s(p^{-1}f)(z,w)-S_s(p^{-1}f)(z,w') \\ =
(w-w')p^{-1}S_s(\frac{\partial f}{\partial y})(z,w) +
(w-w')^2p^{-1}S_s(\frac{\partial^2 f}{\partial^2 y})(\theta),
\end{multline*}
and by Lemma \ref{lem:upperbound} for $\frac{\partial^2 f}{\partial^2 y}$
$$
|(w-w')^2p^{-1}S_s(\frac{\partial^2 f}{\partial^2 y})(\theta)|\leq C_{\a,f} p^{-1}\delta_w^2s^{1/2}<T'^{-1/3}.
$$
for $s\leq T'^{11/10}$. An analogous reasoning for $S_{[\zeta s]}(q^{-1}f)(x,y)-S_{[\zeta s]}(q^{-1}f)(x,y')$   shows that \eqref{eq:vertsplit} follows by showing that there exist $D'',d''>0$ such that 
\begin{equation}\label{eq:ghj}
|(w-w')p^{-1}S_{s}(\frac{\partial f}{\partial y})(z,w)-(y-y')q^{-1}S_{[\zeta s]}(\frac{\partial f}{\partial y})(x,y)|\geq 2d''.
\end{equation}
for some $s\in [0, D''T']$.

To simplify notation let $h=\frac{\partial f}{\partial y}$. Let $\chi \in (0,1)$ be a small number, $\chi\ll \frac{d'}{D'}$  ($d',D'$ from Lemma \ref{lem:linsplit}).
We will consider two cases:\\
\textbf{A.}$\left||\frac{(w-w')q^{1/2}}{(y-y')p^{1/2}}|-1\right|\geq \chi$.

We will (WLOG) assume that $|(w-w')q^{1/2}|\geq (1+\chi)|(y-y')p^{1/2}|$, the proof in the case $|(w-w')q^{1/2}|\leq (1-\chi)|(y-y')p^{1/2}|$ is symmetric and follows the same lines. Notice that in this case 
$$ \quad T'\leq \frac{2p }{q }\delta_w^{-2}.$$

Let $V_{\chi/3,\zeta}>0$ come from Lemma \ref{lem:supr} and let $n_0\in [T', V_{\chi/3,\zeta}T']$ be as in the statement of Lemma \ref{lem:supr} for $g=h$. Then
\begin{equation}\label{eq:cor1}
\|S_{n_0}(h)\|_{C_0(\T^2)}\geq (1-\chi/3)\zeta^{-1/2}\|S_{[\zeta n_0]}(h)\|_{C_0(\T^2)}
\end{equation}

Let $D_{\chi/3}>0$ come from Lemma \ref{lem:gro} and let $u=u(n_0)\in \{0,\ldots,D_{\chi/3} n_0\}$ be such that 
\begin{equation}\label{eq:cor2}
|S_{n_0}(h)(\Phi_{\a,\beta}^u(z,w))|\geq (1-\frac{\chi}{3})\|S_{n_0}(h)\|_{C_0(\T^2)}.
\end{equation}
Let  $v=[\zeta (n_0+u)]-[\zeta n_0]$. We will show that 
\begin{equation}\label{eq:p1}
p^{-1}|S_{n_0}(h)(\Phi_{\a,\beta}^u(z,w))|
\vert w-w'\vert \geq (1+\chi/100)q^{-1} 
|S_{[\zeta n_0]}(h)(\Phi_{\a,\beta}^v(x,y))|\vert y-y'\vert
\end{equation}
Then 
\begin{multline*}
|(w-w')p^{-1}S_{n_0}(h)(\Phi_{\a,\beta}^u(z,w))-
(y-y')q^{-1}S_{[\zeta n_0]}(h)(\Phi_{\a,\beta}^v(x,y))|\geq \\
(1-(1+\chi/100)^{-1})\vert w-w'\vert p^{-1}|S_{n_0}(h)(\Phi_{\a,\beta}^u(z,w))|\geq d''\vert w-w'\vert n_0^{1/2}\\ \geq d''(\chi)\delta_w T'^{1/2}\geq d''(\chi) \,,
\end{multline*}
for some $d''(\chi)>0$. But then by cocycle identity we know that \eqref{eq:ghj} holds for $s=n_0$ or $s=n_0+u$ 
 and $d'(\chi)=d''(\chi)/2>0$.

Therefore it only remains to show \eqref{eq:p1}. By \eqref{eq:cor2} and \eqref{eq:cor1} and the assumptions of \textbf{A.}, we have
\begin{multline*}
p^{-1}|S_{n_0}(h)(\Phi_{\a,\beta}^u(z,w))|
\vert w-w'\vert \\ \geq p^{-1}(1-\frac{\chi}{3})^2\zeta^{-1/2}\|S_{[\zeta n_0]}(h)\|_{C_0(\T^2)}(1+\chi)\frac{p^{1/2}}{q^{1/2}}  \vert y-y'\vert \\ \geq q^{-1}(1+\chi/100)|S_{[\zeta n_0]}(h)(\Phi_{\a,\beta}^v(x,y))|
\vert y-y'\vert.
\end{multline*}
This finishes the proof of \eqref{eq:p1} and hence also the proof of 
case \textbf{A.}

\textbf{B.}$\left||\frac{(w-w')q^{1/2}}{(y-y')p^{1/2}}|-1\right|\leq \chi$.
In this case the LHS in \eqref{eq:ghj} is larger than
$$
\vert w-w' \vert p^{-1} \left|S_{s}(h)(z,w)-\zeta^{-1/2}S_{[\zeta s]}(h)(x,y)\right|-\chi\vert y-y'\vert q^{-1}|S_{[\zeta s]}(h)(x,y)|.$$
By Lemma \ref{lem:linsplit} for $T=T'$ and the definition of $T'$, there exists an $s_0\in [0,D'T']$ such that 
$$
\vert w-w'\vert p^{-1} \left|S_{s_0}(h)(z,w)-\zeta^{-1/2}S_{[\zeta s_0]}(h)(x,y)\right|\geq p^{-1} d'.
$$
Moreover, by Lemma \ref{lem:upperbound} and the definition of $T'$ for $h$ it follows that 
$$
\chi\vert y-y'\vert q^{-1}|S_{[\zeta s_0]}(h)(x,y)|\leq C_{\a,h}(D')^{1/2} \zeta^{1/2} q^{-1}\chi<p^{-1} \frac{d'}{10}, 
$$
if $\chi>0$ is sufficiently small. This finishes the proof of Lemma \ref{lem:vertsplit}.
\end{proof}


\section{Ratner's property: proof of Theorem \ref{thm:main}}\label{sec:proof}
In this section we will use the estimates from section \ref{sec:birksums} to prove Theorem \ref{thm:main}. We will use  Proposition \ref{prop:specflows}. Before we do that, we will prove a crucial proposition: 
For $(x,y),(x',y')\in \T^2$ denote $\vert x-x'\vert =\delta_x$ and $\vert y-y'\vert =\delta_y$.
\begin{proposition}\label{prop:split} For any $\a\in \R\setminus \Q$ of bounded type and for any  $f\in W^s(\T^2)$, $s>7/2$, there exists a constant $D_{\a,f}>0$ such that  the following holds. For every $(x,y),(x',y')\in \T^2$, if $T:=\min(\delta_x^{-2/3}, \delta_y^{-2})$, there exists $n_0=n_0((x,y),(x',y'))\in [0,D_{\a,f}T]\cap \Z$ such that 
\begin{equation}\label{eq:drift}
|S_{n_0}(f)(x,y)-S_{n_0}(f)(x',y')|>1.
\end{equation}
\end{proposition}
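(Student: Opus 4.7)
The strategy is a two-case analysis on the relative sizes of $\delta_x$ and $\delta_y$, combined with Lemma \ref{lem:span} and Lemma \ref{lem:lowbound}. A preliminary remark is that $\partial_y f$ inherits the non-coboundary property of $f$: the invariant distributions of Theorem \ref{thm:smoothcb} satisfy $D_{m,n}(\partial_y f) = 2\pi i n\, D_{m,n}(f)$, and since the relevant index set has $n\neq 0$, the non-vanishing of some $D_{m,n}(f)$ forces the non-vanishing of $D_{m,n}(\partial_y f)$. Hence Lemma \ref{lem:gro} applies to $\partial_y f$ and yields $\|S_n(\partial_y f)\|_{C^0(\T^2)} \geq c'' n^{1/2}$, which Lemma \ref{lem:lowbound} propagates to orbit windows of length $O(n)$. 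Set $\Delta_n := S_n(f)(x,y) - S_n(f)(x',y')$.

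In the horizontal-dominant regime ($\delta_y \leq \delta_x^{1/3}$, so $T = \delta_x^{-2/3}$), I would apply Lemma \ref{lem:span} with $(a,b)=(x,y)$, $(c,d)=(x',y')$ and parameters $W,K$ of order $T$ (with $W\leq\delta_x^{-1}$, valid for $\delta_x$ small), rewriting the conclusion as the cocycle identity
\[
\Delta_{K+W}-\Delta_W = W\, S_K(\partial_y f)(\Phi_{\a,\beta}^W(x,y))\,\delta_x + O\!\left(K^{1/2}W^2\delta_x^2 + K^{3/2}\delta_x + K^{1/2}\delta_y\right).
\]
Lemma \ref{lem:lowbound} furnishes a $W$ within a window of length $O(K)$ for which $|S_K(\partial_y f)(\Phi_{\a,\beta}^W(x,y))|\gtrsim K^{1/2}$, so the main term has size $\sim WK^{1/2}\delta_x$, of unit order at $W\sim K\sim T$. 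By enlarging the proportionality constant (absorbed into $D_{\a,f}$) the main term is forced above $2$ while the errors remain bounded, and the triangle inequality then guarantees $|\Delta_W|>1$ or $|\Delta_{K+W}|>1$.

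In the vertical-dominant regime ($\delta_y>\delta_x^{1/3}$, $T=\delta_y^{-2}$, so $\delta_x\leq\delta_y^3$), the Lemma \ref{lem:span} main term becomes too small when $\delta_x \ll \delta_y^3$, so I would instead split $\Delta_n$ through the intermediate point $(x,y')$:
\[
\Delta_n = \bigl[S_n(f)(x,y)-S_n(f)(x,y')\bigr] + \bigl[S_n(f)(x,y')-S_n(f)(x',y')\bigr].
\]
Lemma \ref{lem:diver} bounds the second bracket by $C'n^{3/2}\delta_x \leq C'n^{3/2}\delta_y^3$, of unit order on $[0,D_{\a,f}T]$; a Taylor expansion in $y$, with the quadratic remainder controlled by Lemma \ref{lem:upperbound} applied to $\partial_y^2 f$, gives the first bracket as $\delta_y S_n(\partial_y f)(x,y) + O(\delta_y^2 n^{1/2})$. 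Lemma \ref{lem:lowbound} for $\partial_y f$ produces, at $n=CT$ for a large constant $C$, an index $i\in[0,D_\eta n]$ with $|S_n(\partial_y f)(\Phi_{\a,\beta}^i(x,y))|\gtrsim n^{1/2}$; the cocycle identity then yields $|\Delta_i|>1$ or $|\Delta_{i+n}|>1$ provided the vertical contribution $\gtrsim c''C^{1/2}/2$ strictly beats the horizontal contribution plus $1$. The main obstacle is the crossover $\delta_x\sim\delta_y^3$, where both contributions are of unit order: I would handle it by fixing a threshold $\delta_x\leq\delta_y^3/D$ (with $D$ chosen large enough so the horizontal bound is $<1$), in which regime the vertical argument closes, and covering the complementary range $\delta_x>\delta_y^3/D$ by the horizontal-dominant argument whose main term is then of size $\gtrsim 1/D$ at the borderline scale and is brought above $2$ by a further enlargement of $D_{\a,f}$. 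All constants are calibrated in terms of $c''$ (Lemma \ref{lem:gro}), $C'$ (Lemma \ref{lem:diver}) and $D_\eta$ (Lemma \ref{lem:lowbound}).
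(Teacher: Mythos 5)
Your proposal is correct in its essentials and gets the estimates right, but the route is genuinely different from the paper's. The paper does not do a case split at $\delta_x^{1/3}$ versus $\delta_y$. Instead, it works directly with $S_k(f)(\Phi_{\a,\beta}^m(x,y))-S_k(f)(\Phi_{\a,\beta}^m(x',y'))$ and splits through a single intermediate point $z_m$ carrying the $x$-coordinate of $\Phi_{\a,\beta}^m(x,y)$ and the $y$-trajectory of $(x',y')$; the Taylor step then produces a \emph{single} main term $\bigl(m(x-x')+(y-y')\bigr)\,S_k(\partial_y f)(\Phi_{\a,\beta}^m(x,y))$ that combines horizontal amplification and vertical displacement at once, while the horizontal error $C'\delta_x k^{3/2}$ comes from Lemma \ref{lem:diver}. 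To prevent $m(x-x')$ and $(y-y')$ from cancelling, the paper picks two starting times $m_1$ and $m_2\geq 10 m_1$ (both supplied by Lemma \ref{lem:lowbound}) and observes that for at least one of them $|m(x-x')+(y-y')|\geq\max(m_1\delta_x/2,\delta_y/2)$, so the main term dominates and is bounded below simultaneously in both regimes $T=\delta_x^{-2/3}$ and $T=\delta_y^{-2}$. Your decoupled version — Lemma \ref{lem:span} to isolate the $W\delta_x$ contribution when $\delta_x^{1/3}\gtrsim\delta_y$, and a split through $(x,y')$ with a vertical Taylor expansion when $\delta_y^3\gtrsim D\delta_x$ — is a valid alternative and even somewhat more transparent, but the price is the crossover analysis around $\delta_x\sim\delta_y^3$, which the paper's trick sidesteps entirely. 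Two small points worth tightening: in the horizontal-dominant case, ``enlarging the proportionality constant'' should be read as making the ratio $W/K$ large (scaling $W$ and $K$ together does nothing to the ratio of the main term $WK^{1/2}\delta_x$ to the error $K^{3/2}\delta_x$); and in the vertical-dominant case, after the cocycle shift to time $i$, the pair $\Phi_{\a,\beta}^i(x,y')$, $\Phi_{\a,\beta}^i(x',y')$ has $d_2 = i\delta_x$, so Lemma \ref{lem:diver} gives an extra $C'n^{1/2}i\delta_x$ term you omitted — harmless here because it is of the same order as $C'n^{3/2}\delta_x$ when $i\lesssim n$, but it should be carried along. Your preliminary observation that $D_{m,n}(\partial_y f)=2\pi i n\,D_{m,n}(f)$ and hence $\partial_y f$ inherits the non-coboundary property from $f$ is a useful point that the paper leaves implicit (it silently invokes Lemma \ref{lem:gro} for $g=\partial_y f$).
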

\begin{proof} Denote $a_n=S_{n}(f)(x,y)-S_{n}(f)(x',y')$, let $\eta<1/100$ and let $D_\eta>0$ come from Lemma \ref{lem:lowbound} for $g=\frac{\partial f}{\partial y}$. Let moreover $c''>0$ be as in Lemma \ref{lem:gro} for $g=\frac{\partial f}{\partial y}$ and $C'>c'' >0$ be as in Lemma \ref{lem:diver} for $g=f-\int_{\T^2} f d\lambda_{\T^2}$.
 Let $k:=[\frac{100T}{c''^2}]+1$. We will show that there exists $m\in [\frac{10C'k}{c''},\frac{10C'k}{c''}+D_\eta k+1]\cap \Z$ such that 
\begin{equation}\label{eq:cocyid}
|S_k(f)(\Phi_{\a,\beta}^m(x,y))-S_k(f)(\Phi_{\a,\beta}^m(x',y'))|\geq 3.
\end{equation}
This will finish the proof since then, by cocycle identity $|a_{m+k}-a_k|\geq 3$ and consequently \eqref{eq:drift} holds for $n_0=m+k$ or $n_0=k$.
By Lemma \ref{lem:lowbound} and \ref{lem:gro} there exists $m_1\in [\frac{10C'k}{c''},\frac{10C'k}{c''}+D_\eta k+1]\cap \Z$ and $m_2\in [10m_1,10m_1+D_\eta k+1]$ such 
that  
\begin{equation}\label{eq:mlarge}
|S_k(\frac{\partial f}{\partial y})(\Phi_{\a,\beta}^m(x,y))|\geq (1-\eta)c'' k^{1/2} \,,  \quad \text {\rm for } m=m_1, m_2\,.
\end{equation}
Moreover, since  $m_2\geq 10m_1$,  by the triangle inequality we have
$$
\max_{m\in\{m_1,m_2\}}\vert m(x-x')+(y-y')\vert \geq \max\left(\frac{m_1}{2}\delta_x,\frac{\delta_y}{2}\right).
$$
Let now $m$ denote the element above which attains the maximum. Since 
$m\geq \frac{10C'k}{c''}$, we have 
\begin{equation}\label{eq:qu}
(1-\eta)c''\vert m(x-x')+(y-y')\vert k^{1/2}\geq 2C'\delta_x k^{3/2}.
\end{equation}
Let $z_m:=( x+m\a, y'+m x'+\frac{m(m-1)\a}{2})$.  Notice that  
\begin{multline}\label{mult:split}
S_k(f)(\Phi_{\a,\beta}^m(x,y))-S_k(f)(\Phi_{\a,\beta}^m(x',y'))=
\left(S_k(f)(\Phi_{\a,\beta}^m(x,y))-S_k(f)(z_m)\right)+\\
\left(S_k(f)(z_m)-S_k(f)(\Phi_{\a,\beta}^m(x',y'))\right).
\end{multline}
By Lemma \ref{lem:diver} for $g=f-\int_{\T^2} f d\lambda_{\T^2}$, we have 
$$
|S_k(f)(\Phi_{\a,\beta}^m(x',y'))-S_k(f)(z_m)|\leq C' \delta_x k^{3/2}.
$$
By Taylor formula (and chain rule), for some $\theta_k\in \T^2$,
\begin{multline*}
S_k(f)(\Phi_{\a,\beta}^m(x,y))-S_k(f)(z_m)=(m(x-x')+(y-y'))S_k(\frac{\partial f}{\partial y})(\Phi_{\a,\beta}^m(x,y))+\\
(m(x-x')+(y-y'))^2
S_k(\frac{\partial^2 f}{\partial y^2})(\theta_k).
\end{multline*}
Since $m\leq C''' T$ (for some $C'''>0$), we have $\vert m(x-x')+(y-y')\vert \leq m\delta_x  + \delta_y \leq T^{-1/3}$, for $T>1$ large enough, and therefore by Lemma \ref{lem:upperbound} for $g=\frac{\partial f}{\partial y}$, we have
$$
|(m(x-x')+(y-y'))^2
S_k(\frac{\partial^2 f}{\partial y^2})(\theta_k)|\leq 1/100.
$$
Therefore, by \eqref{eq:qu}, \eqref{eq:mlarge} and \eqref{mult:split}, we have
\begin{multline*}
|S_{k}(f)(\Phi_{\a,\beta}^m(x,y))-S_{k}(f)(\Phi_{\a,\beta}^m(x',y'))| \\ \geq (1-\eta)c''\vert m\delta_x+\delta_y\vert k^{1/2}-1/100-C'\delta_x k^{3/2}\\ \geq 
\frac{c''}{3}\vert m\delta_x+\delta_y\vert k^{1/2}-1/100.
\end{multline*}
However, since $m_1\geq \frac{10C'k}{c''} >10k$,  we have 
$$ 
\frac{c''}{3}\vert m\delta_x+\delta_y\vert k^{1/2}\geq \frac{c''}{3}\max(\frac{m_1}{2}\delta_x,\frac{\delta_y}{2})k^{1/2} \geq 2,
$$
the last inequality since $k=[\frac{100T}{c''^2}]+1$. This finishes the proof of \eqref{eq:cocyid} and hence also the proof of Proposition \ref{prop:split}.
\end{proof}

Now we can prove Theorem \ref{thm:main}:

\begin{proof}[Proof of Theorem \ref{thm:main}]
We will use Proposition \ref{prop:specflows}.
Fix $\epsilon>0$ and $N\in \N$. Let  $Z=\T^2$ (see Definition \ref{def.rat}), and take any $(x,y),(x',y')\in \T^2$ with 
$d((x,y),(x',y'))=\delta_x+\delta_y<\delta$ (we will specify $\delta$ and $\kappa$ in the proof). Let 
\begin{equation}\label{def:time}
T:=\min(\delta_x^{-2/3},\delta_y^2).
\end{equation}

Denote $a_k=S_k(f)(x,y) -  S_k(f)(x',y')$ and let $D_{\a,f}>0$ be as in Proposition \ref{prop:split}. Notice that  
 for every $k\in [0,2D_{\a,f}T]$, we have
\begin{multline}\label{eq:distn}d_1((\Phi_{\a,\beta}^k(x,y)),(\Phi_{\a,\beta}^k(x',y')))=\vert x-x'\vert=
\delta_x\text{ and }\\
d_2((\Phi_{\a,\beta}^k(x,y)),(\Phi_{\a,\beta}^k(x',y')))\leq k\delta_x+\delta_y\leq 2D_{\a,f}\delta_x^{1/3}+\delta_y\,,
\end{multline}
hence by Lemma \ref{lem:diver} and by the cocycle identity, for any $\epsilon>0$ there exists $\delta_\epsilon>0$  such
that for $\delta\in (0, \delta_\epsilon)$, we have that $|a_{k+1} -a_k| < \epsilon/4$ for every $k \in  [0,D_{\a,f} T]$.
 Since by Proposition~\ref{prop:split}, there exists 
$n_0 \in [0,D_{\a,f}T]$ such that $\vert a_{n_0} \vert >1$, it follows that there exists $M\in [0,D_{\a,f}T]$ such that 

$$
\min \{ |a_M + 1|,  | a_M - 1|\} \leq \epsilon/3.
$$
Notice that by Lemma~\ref{lem:diver}, for every $N\in \N$ there exists $\delta_N>0$ such that for $\delta\in (0,\delta_N)$,  we have $M\geq N$.
Let $L=\kappa M$. Notice that $[M,M+L]\subset [0,2D_{\a,f}T]$ and therefore 
\begin{equation}\label{eq:eps}
d(\Phi_{\a,\beta}^n(x,y),\Phi_{\a,\beta}^n(x',y'))<\epsilon, \text{ for every } n\in [M,M+L]\,.
\end{equation}

Moreover, by the cocycle identity, for every  $n\in [M,M+L]$, we have  
$$
|a_n-a_M|=|S_{n-M}(f)(\Phi_{\a,\beta}^M(x,y))-S_{n-M}(f)(\Phi_{\a,\beta}^M(x',y'))|.
$$
By Lemma \ref{lem:diver} (for $g=f-\int_{\T^2}f d\lambda_{\T^2}$),  \eqref{eq:distn} for $k=M$ and \eqref{def:time}, we have (since $n-M\leq L=\kappa M\leq D_{\a,f}\kappa T$),
there exists $\kappa_\epsilon>0$ such that for  $\kappa\in (0, \kappa_\epsilon)$, we have
\begin{multline*}
|S_{n-M}(f)(\Phi_{\a,\beta}^M(x,y))-S_{n-M}(f)(\Phi_{\a,\beta}^M(x',y'))|\leq\\ C'(D_{\a,f}\kappa T)^{3/2}\delta_x+ C'(D_{\a,f}\kappa T)^{1/2}(2D_{\a,f}\delta_x^{1/3}+\delta_y)\leq\\
 C'(D_{\a,f}\kappa)^{3/2}+C'(D_{\a,f}\kappa)^{1/2}(2D_{\a,f}+1)\leq \epsilon/3.
\end{multline*}
Therefore, for every $n\in [M,M+L]$, 
$$
|a_n +  1|\leq |a_n-a_M|+|a_M+  1|\leq \epsilon \quad \text{ or } \quad |a_n -  1|\leq |a_n-a_M|+|a_M-  1| \leq \epsilon\,.
$$
This property, together with \eqref{eq:eps}, finishes the proof of  the hypoheses of Proposition~\ref{prop:specflows} and hence also the proof of Theorem~\ref{thm:main}.
\end{proof}

\section{Disjointness: proof of Theorem \ref{thm:main2}}\label{sec:proof2}
Notice that if $(\Phi_t^f)$ is a special flow over $T$ and under $f$ then $(\Phi_{rt}^f)$ is isomorphic to $(\Phi_t^{r^{-1}f})$. We will therefore use Proposition~\ref{cocycle} for 
$(\Phi_t^{\a,p^{-1}f})$ and $(\Phi_t^{\a,q^{-1}f})$ (notice that by Corollary \ref{corA} we know that $(\Phi_t^{\a,f})$, and hence also $(\Phi_t^{\a,p^{-1}f})$ and $(\Phi_t^{\a,q^{-1}f})$, are weakly mixing).

 We assume WLOG that $\zeta=\frac{q}{p}>1$.

Recall that for $(x,y),(x',y')\in \T^2$, $\delta_x=\vert x-x'\vert$ and $\delta_y=\vert y-y'\vert$. For the proof of Theorem \ref{thm:main2} we need the following crucial proposition:

\begin{proposition}\label{prop:cruc} There exists $D'''=D'''(\a,f,p,q)>0$ and $d'''=d'''(\a,f,p,q)>0$ such that for every $(x,y),(x',y'),(z,w),(z,w')\in \T^2$ if 
$$
T:=\min(\delta_{x}^{-2/3},\delta_{y}^{-2},\delta_{w}^{-2}),
$$
then for some $s\in [0,D'''T]$, we have
\begin{equation}\label{eq:nwc}
|\left(S_s(p^{-1}f)(z,w)-S_s(p^{-1}f)(z,w')\right)-
\left(S_{[\zeta s]}(q^{-1}f)(x,y)-S_{[\zeta s]}(q^{-1}f)(x',y')\right)|\geq 
d'''.
\end{equation}
\end{proposition}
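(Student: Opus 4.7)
My plan is to deduce Proposition~\ref{prop:cruc} from Lemma~\ref{lem:vertsplit} and Proposition~\ref{prop:split} by a case analysis based on which of $\delta_x^{-2/3}, \delta_y^{-2}, \delta_w^{-2}$ realises the minimum defining $T$. Set $T' := \min(\delta_y^{-2}, \delta_w^{-2})$, so $T = \min(T', \delta_x^{-2/3})$. Two regimes arise: the \emph{vertical-dominant} case $T = T' \leq \delta_x^{-2/3}$ (equivalently $\delta_x \leq T^{-3/2}$, so the horizontal perturbation of $(x,y),(x',y')$ is negligible on the time scale $T$), and the \emph{horizontal-dominant} case $T = \delta_x^{-2/3} \leq T'$ (equivalently $\delta_y, \delta_w \leq T^{-1/2}$, so the vertical perturbations are negligible). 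Writing $A_s := S_s(p^{-1}f)(z,w) - S_s(p^{-1}f)(z,w')$ and $B_s := S_{[\zeta s]}(q^{-1}f)(x,y) - S_{[\zeta s]}(q^{-1}f)(x',y')$, the goal in each case is a drift $|B_s - A_s| \geq d'''$ at some $s \leq D''' T$.

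In the vertical-dominant case I would introduce the auxiliary point $(x,y')$ (sharing the first coordinate of $(x,y)$ and the second coordinate of $(x',y')$) and decompose
$$
B_s - A_s = \underbrace{\bigl(S_{[\zeta s]}(q^{-1}f)(x,y) - S_{[\zeta s]}(q^{-1}f)(x,y')\bigr) - A_s}_{R_s} \;+\; \underbrace{S_{[\zeta s]}(q^{-1}f)(x,y') - S_{[\zeta s]}(q^{-1}f)(x',y')}_{E_s}.
$$
Since the pairs $(x,y),(x,y')$ and $(z,w),(z,w')$ both share first coordinates, Lemma~\ref{lem:vertsplit} applies to the $4$-point configuration and produces $s \in [0, D'' T]$ with $|R_s| \geq d''$. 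The horizontal error $E_s$ is controlled by Lemma~\ref{lem:diver}: $|E_s| \leq C' q^{-1}(\zeta s)^{3/2}\delta_x$, which upon using $\delta_x \leq T^{-3/2}$ is bounded by $C' q^{-1}(\zeta D'')^{3/2}$, a constant depending only on $\a, f, p, q$. The triangle inequality then concludes this case provided the margin $d'' - C' q^{-1}(\zeta D'')^{3/2}$ is positive.

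In the horizontal-dominant case I would apply Proposition~\ref{prop:split} directly to $(x,y),(x',y')$: since $\min(\delta_x^{-2/3}, \delta_y^{-2}) = T$ in this regime, it produces $n_0 \in [0, D_{\a,f} T]$ with $|S_{n_0}(f)(x,y) - S_{n_0}(f)(x',y')| > 1$, so that $|B_s| > 1/q$ for $s := n_0/\zeta \leq D_{\a,f} T / \zeta$. For the same $s$, Lemma~\ref{lem:diver} together with $\delta_w \leq T^{-1/2}$ gives $|A_s| \leq C' p^{-1} s^{1/2}\delta_w \leq C' p^{-1}(D_{\a,f}/\zeta)^{1/2}$, again a constant independent of $T$. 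The triangle inequality suffices provided the margin $1/q - C' p^{-1}(D_{\a,f}/\zeta)^{1/2}$ is positive.

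The main obstacle is the strict positivity of the two residual margins: these are fixed combinations of the constants supplied by the helper lemmas and nothing in their statements guarantees positivity. My expected remedy is an amplification argument, applying Lemma~\ref{lem:vertsplit} (respectively Proposition~\ref{prop:split}) iteratively on concatenated shifted configurations via the cocycle identity --- which preserve the first-coordinate perturbation $\delta_x$ and only shift the second-coordinate perturbation by $n \delta_x$, an amount controllable in each regime --- to boost the main-term lower bound from $d''$ (respectively $1$) to $K d''$ (respectively $K$) for any prescribed $K$, at the cost of enlarging the admissible time window by a factor proportional to $K$. Once $K$ is taken sufficiently large the error terms are absorbed and the desired positive constants $D''', d'''$ depending only on $\a, f, p, q$ are obtained.
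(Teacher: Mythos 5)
Your two-case split correctly identifies the key tools (Lemma~\ref{lem:vertsplit}, Proposition~\ref{prop:split}, Lemma~\ref{lem:diver}) and the correct shape of the argument (main-drift versus error), but the split is too coarse to control the error: in both of your cases the error bound is a fixed constant with no relationship to the main-drift constant, and the amplification you propose to close this gap cannot work in the vertical-dominant case. Concretely, in the vertical-dominant case the error scales as $|E_s|\leq C' q^{-1}(\zeta s)^{3/2}\delta_x$, so over a window $s\leq K D'' T$ with $\delta_x\leq T^{-3/2}$ it grows like $K^{3/2}$; but the vertical drift supplied by Lemma~\ref{lem:vertsplit} is bounded above by $Cp^{-1}s^{1/2}\delta_w\leq C p^{-1}(K D'')^{1/2}$ (since $\delta_w \leq T^{-1/2}$ and $|S_n|\lesssim n^{1/2}$ by Lemma~\ref{lem:upperbound}), which can grow at most like $K^{1/2}$. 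Amplification therefore makes the error dominate faster than the main term grows, and it fails precisely in the borderline regime $\max(\delta_y,\delta_w)\sim\delta_x^{1/3}$ where the margin question bites.

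The paper resolves this by a finer, genuinely three-tiered case split controlled by two large constants $C_{p,q}$ and $R'$ chosen \emph{after} the constants of the helper lemmas. First it separates the regime where $\delta_w$ is much larger or much smaller than $\max(\delta_y,\delta_x^{1/3})$ (Case~1): there one of the two drifts is negligible by Lemma~\ref{lem:diver}, with an explicit gain of $1/C_{p,q}$ that makes the triangle inequality close. Only in the remaining ``comparable $\delta_w$'' regime (Case~2) does it further split on whether $\delta_y$ dominates $R'\delta_x^{1/3}$; when it does, Lemma~\ref{lem:vertsplit} applies and the horizontal error now carries a factor $1/R'^3$ because $\delta_x\leq\delta_y^3/R'^3$, not merely $\delta_x\leq T^{-3/2}$. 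Your vertical-dominant case has exactly the same structure but lacks this extra factor, which is what makes the margin positive. Finally, when $\delta_y\lesssim\delta_x^{1/3}$ as well (Case~2, Subcase~2), the paper does not invoke Proposition~\ref{prop:split} as a black box but reruns its mechanism directly, choosing the shift $m\sim R''\max(k,T)$ so that the linear horizontal term $m\delta_x S_k(\partial f/\partial y)$ strictly dominates both the Taylor remainder and the $(z,w)$-drift $\sim k^{1/2}\delta_w$. This is the precise form of the amplification you gesture at in your horizontal-dominant case, but it is not an iteration of the proposition: the proof of Proposition~\ref{prop:split} itself must be revisited with a larger, regime-dependent shift, and the admissible size of the shift is constrained by the requirement that $m\delta_x$ stay small enough for the Taylor expansion, a point your proposal does not address.
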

\begin{proof} Let $C_{p,q}>1$ be a large constant (specified at the end of \textbf{Case 1}).
We will consider the following cases:

\textbf{Case 1.} $\delta_w\geq C_{p,q} \max(\delta_y,\delta_x^{1/3})$ or $\delta_w\leq C_{p,q}^{-1}\max(\delta_y,\delta_x^{1/3})$. 

If $\delta_w\geq C_{p,q}\max(\delta_y,\delta_x^{1/3})$, then $T=\delta_w^{-2}$. By Proposition \ref{prop:split} for $(z,w)$ and $(z,w')$ we have 
$$|S_s(p^{-1}f)(z,w)-S_s(p^{-1}f)(z,w')|\geq p^{-1}$$
for some $s\leq D_{\a,f}T$. Moreover, by Lemma \ref{lem:diver}, for $(x,y)$ and $(x',y')$ we have 
\begin{multline*}
|S_{[\zeta s]}(q^{-1}f)(x,y)-S_{[\zeta s]}(q^{-1}f)(x',y')| \\ \leq C'q^{-1}((\zeta D_{\a,f}T)^{3/2}\delta_x+(\zeta D_{\a,f}T)^{1/2}\delta_y)\leq 
\frac{C'q^{-1}\zeta^{3/2}D_{\a,f}^{3/2}}{C_{p,q}}\leq \frac{p^{-1}}{2} \,, 
\end{multline*}
if $C_{p,q}>1$ is large enough.

Similarly, if $\delta_w\leq C_{p,q}^{-1}\max(\delta_y,\delta_x^{1/3})$, using Proposition \ref{prop:split} for $(x,y)$ and $(x',y')$ we have 
$$|S_{[\zeta s]}(q^{-1}f)(x,y)-S_{[\zeta s]}(q^{-1}f)(x',y')|\geq q^{-1},$$
for some $s\leq \zeta^{-1}D_{\a,f}T$. Moreover, by Lemma \ref{lem:diver}, for $(z,w)$ and $(z,w')$ we have 
$$
|S_s(p^{-1}f)(z,w)-S_s(p^{-1}f)(z,w')|\leq p^{-1}\frac{C' D_{\a,f}^{1/2}\zeta^{-1/2}}{C_{p,q}}\leq \frac{q^{-1}}{2},
$$
if $C_{p,q}>1$ is large enough.
This finishes the proof of \textbf{Case 1.}\\

\textbf{Case 2.} $C_{p,q}\max(\delta_y,\delta_x^{1/3})\geq\delta_w\geq C_{p,q}^{-1}\max(\delta_y,\delta_x^{1/3})$. Let $R'=R'_{p,q}>1$ be a constant to be specified later (at the end of the proof of \textbf{Subcase 1}). 

We consider two subcases: \\
\textbf{Subcase 1.} $R'\delta_x^{1/3}\leq \delta_y$.
Notice that, by Lemma \ref{lem:diver}, we have, for all $k\in \N$,
$$
|S_{[\zeta k]}(q^{-1}f)(x',y')-
S_{[\zeta k]}(q^{-1}f)(x,y')|\leq q^{-1}C'(\zeta k)^{3/2}\delta_x\leq \frac{q^{-1}C'(\zeta k)^{3/2}}{R'^{3} T^{3/2}}.
$$
Moreover,  in this case we have $T =\min ( \delta_y^{-2}, \delta_w^{-2})$, hence by Lemma \ref{lem:vertsplit}, there exists $s\in [0,D''T]$ 
such that 
$$
|\left(S_s(p^{-1}f)(z,w)-S_s(p^{-1}f)(z,w')\right)-
\left(S_{[\zeta s]}(q^{-1}f)(x,y)-S_{[\zeta s]}(q^{-1}f)(x,y')\right)|\geq 
d''.
$$
Hence if only 
$\frac{q^{-1}C'(\zeta D''T)^{3/2}}{R'^{3}T^{3/2}}\leq d''/2$ (which is true if $R'$ is large enough), then \eqref{eq:nwc} follows by triangle inequality.

\textbf{Subcase 2.} $R'\delta_x^{1/3}\geq \delta_y$. 

Notice that by Lemma \ref{lem:diver}, for every $k, m\in \N$
\begin{equation}\label{eq:02}
|S_k(p^{-1}f)(\Phi_{\a,\beta}^m(z,w))-S_k(p^{-1}f)(\Phi_{\a,\beta}^m(z,w'))|\leq C'p^{-1}k^{1/2}\delta_w.
\end{equation}
Moreover, for every $m\in \N$ denote $h_m=(x'+m\a, y+mx+\frac{m(m-1)\a}{2})$. Then

\begin{multline}\label{sar:split2}
S_{[\zeta k]}(q^{-1}f)(\Phi_{\a,\beta}^m(x,y))-S_{[\zeta k]}(q^{-1}f)
(\Phi_{\a,\beta}^m(x',y'))\\ =
\left(S_{[\zeta k]}(q^{-1}f)(\Phi_{\a,\beta}^m(x,y))-S_{[\zeta k]}(q^{-1}f)(h_m)\right)\\ +
\left(S_{[\zeta k]}(q^{-1}f)(h_m)-S_{[\zeta k]}(q^{-1}f)(\Phi_{\a,\beta}^m(x',y'))\right).
\end{multline}
By Lemma \ref{lem:diver} for $g=f-\int_{\T^2} f d\lambda_{\T^2}$, we have 
\begin{equation}\label{eq:01}
|S_{[\zeta k]}(q^{-1}f)(\Phi_{\a,\beta}^m(x,y))-S_{[\zeta k]}(q^{-1}f)(h_m)|\leq q^{-1}\zeta^{3/2}C' \delta_x k^{3/2}.
\end{equation}
By Taylor formula (and the chain rule), for some $\theta_k\in \T^2$,
\begin{multline}\label{eq:sply}
S_{[\zeta k]}(q^{-1}f)(\Phi_{\a,\beta}^m(x',y'))-S_{[\zeta k]}(q^{-1}f)(h_m) \\ =(m(x-x')+(y-y'))S_{[\zeta k]}(q^{-1}\frac{\partial f}{\partial y})(\Phi_{\a,\beta}^m(x',y'))\\ +
(m(x-x')+(y-y'))^2 S_{[\zeta k]}(q^{-1}\frac{\partial^2 f}{\partial y^2})(\theta_k).
\end{multline}
For $m\leq T^{1+1/10}$, we have $\vert m(x-x')+(y-y')\vert \leq m\delta_x +\delta_y \leq T^{-1/3}$ and therefore, by Lemma \ref{lem:upperbound} for $g=\frac{\partial^2 f}{\partial y^2}$,
$$
|(m(x-x')+(y-y'))^2
S_k(\frac{\partial^2 f}{\partial y^2})(\theta_k)|\leq T^{-1/10}.
$$
Let now $R''\gg R'$ (to be specified later) and let $m\in [R''\max(k,T ),R''\max(k,T)+D_\eta \zeta k]$ (see Lemma \ref{lem:lowbound}) be such that 
$$
S_{[\zeta k]}(q^{-1}\frac{\partial f}{\partial y})(\Phi_{\a,\beta}^m(x',y'))\geq c'q^{-1}\zeta^{1/2} k^{1/2}.
$$
Then since $R'\delta_x^{1/3}\geq \delta_y$ and we are in  \textbf{Case 2}, for $R''>1$ large enough we have the inequalities
$m\delta_x \geq R'' T \delta_x \geq  10 \delta_y$, hence by \eqref{eq:sply} we have
\begin{equation}\label{eq:00}
|S_{[\zeta k]}(q^{-1}f)(\Phi_{\a,\beta}^m(x',y'))-S_{[\zeta k]}(q^{-1}f)(h_m)|\geq \frac{c'}{2}q^{-1}\zeta^{1/2}k^{1/2}m\delta_x.
\end{equation}
Since $R'\delta_x^{1/3} \geq \delta_y$, $m\geq R'' \max(k,T)$ and we are in \textbf{Case 2}, for $R''>1$ large enough we have
the following lower bound:
$$
\frac{c'}{2}q^{-1}\zeta^{1/2}k^{1/2}m\delta_x\geq 10\max(C'p^{-1}k^{1/2}\delta_w,q^{-1}\zeta^{3/2}C'k^{3/2}\delta_x)
$$
and, for any $k\geq T$, we have $\frac{c'}{2}q^{-1}\zeta^{1/2}k^{1/2}m\delta_x\geq d'''>0$.
This, together with \eqref{sar:split2}, \eqref{eq:00}, \eqref{eq:01} and \eqref{eq:02}, implies that 
\begin{multline*}
|\left(S_k(p^{-1}f)(\Phi_{\a,\beta}^m(z,w))-S_k(p^{-1}f)(\Phi_{\a,\beta}^m(z,w'))\right)-\\
\left(S_{[\zeta k]}(q^{-1}f)(\Phi_{\a,\beta}^m(x,y))-S_{[\zeta k]}(q^{-1}f)(\Phi_{\a,\beta}^m(x',y'))\right)|\geq d'''/2.
\end{multline*}
Thus, by cocycle identity, \eqref{eq:nwc} follows for $s=k$ or $s=k+m$.
This finishes the proof.
\end{proof}

We can now prove Theorem \ref{thm:main2}.

\begin{proof}[Proof of Theorem \ref{thm:main2}] We will prove that the hypotheses of the Disjointness Criterion for special flows given in 
Proposition~\ref{cocycle}  are satisfied. Theorem \ref{thm:main2} will then follow.

Let $A_k:(\T^2,\lambda_{\T^2})\to (\T^2,\lambda_{\T^2})$ be given by $A_k(x,y)=(x,y+\frac{1}{k})$. Obviously $A_k\to Id$ uniformly. 
Fix $\epsilon>0$ and $N\in \N$. Let us consider any 
$(x,y),(x',y')\in \T^2$ with $d((x,y),(x',y'))<\delta$ and any $(z,w),(z,w')=A_k(z,w)$ with $k<\delta^{-1}$. Let 
$$
 T:=\min(\delta_x^{-2/3},\delta_y^{-2},\delta_w^{-2}).
$$
Let us define $a_s:=a_{1,s}-a_{2,s}$ with 
$$
\begin{aligned} 
&a_{1,s}:=S_s(p^{-1}f)(z,w)-S_s(p^{-1}f)(z,w')\,, \\ & a_{2,s}:=S_{[\zeta s]}(q^{-1}f)(x,y)-S_{[\zeta s]}(q^{-1}f)(x',y')\,.
\end{aligned} 
$$

Let $D'''>0$ be as in Proposition \ref{prop:cruc}. For every $s\in [0,2D'''T]$, we have 
\begin{multline}\label{eq:clos}
d_2((\Phi_{\a,\beta}^s(z,w)),(\Phi_{\a,\beta}^s(z,w')))=\delta_w, \, 
d_1((\Phi_{\a,\beta}^s(x,y)),(\Phi_{\a,\beta}^s(x',y')))=
\delta_x\text{ and }\\
d_2((\Phi_{\a,\beta}^s(x,y)),(\Phi_{\a,\beta}^s(x',y')))\leq s\delta_x+\delta_y\leq 2D'''\delta_x^{1/3}+\delta_y, 
\end{multline}

Hence (since $f$ is $C^1$), for every $\kappa>0$ there exists $\delta_\kappa>0$ such that, for $\delta\in (0,\delta_\kappa)$ and
for every $s\in [0,2D'''T]$, we have 
$$|a_{s+1}-a_s|<\kappa.$$ 
Thus by Proposition \ref{prop:cruc} there exists $M\in [0,D'''T]$ such that 
$$
\min\{ |a_M+d'''|, |a_M- d''' \} \leq \epsilon/3.
$$
This gives \eqref{forw.coc}.
Notice that, for any given $N\in \N$, there exists $\delta_N>0$ such that, for $\delta \in (0,\delta_N)$ we have 
$M\geq N$, and by Lemma \ref{lem:diver} (for $a_{1,M},a_{2,M}$), by the definition of $T>0$, 
there exists $V>0$, independent of $\epsilon>0$, $\delta>0$ and $N\in N$, such that
$$
|a_M|\leq V,
$$
for some $V>0$. This gives \eqref{eq.bounded}.

\smallskip
Let $L=\kappa M$. Notice that $[0,(1+\zeta)(M+L)]\subset [0,2D'''T]$ and therefore by \eqref{eq:clos}
\begin{equation}
d(\Phi_{\a,\beta}^n(x,y),\Phi_{\a,\beta}^n(x',y'))<\epsilon, \text{ for every } n\in [0,\max(1+\zeta)M+L]
\end{equation}
and analogously for $(z,w), (z,w')$. This gives \eqref{base:close}.

\smallskip

By the cocycle identity and by Lemma \ref{lem:diver}, there exists $\kappa'''_\epsilon>0$ such that for $\kappa\in (0, \kappa'''_\epsilon)$,
for all $u,w\in [0,2\zeta(M+L)]$ with $|u-w|\leq \kappa M$, we have
\begin{multline*}
|a_{1,w}-a_{1,u}|=|S_{w-u}(p^{-1}f)(\Phi_{\a,\beta}^u(z,w))-S_{w-u}(p^{-1}f)(\Phi_{\a,\beta}^u(z,w'))|\\ \leq C''|u-w|^{1/2}\delta_w\leq 
C''\kappa^{1/2}D^{1/2}T^{1/2}\delta_w\leq \epsilon\,.
\end{multline*}

Analogously we show that $|a_{2,u}-a_{2,w}|\leq \epsilon/10$:\\
$$
|a_{2,w}-a_{2,u}|=|S_{\zeta_{w,u}}(q^{-1}f)(\Phi_{\a,\beta}^{[\zeta u]}(x,y))-
S_{\zeta_{w,u}}(q^{-1}f)(\Phi_{\a,\beta}^{[\zeta u]}(x',y'))|, 
$$
for some $\zeta_{w,u}\leq \zeta C_0|w-u|$. By Lemma \ref{lem:diver} (for $g=f-\int_{\T^2}f d\lambda_{\T^2}$),  \eqref{eq:clos} for $s=[\zeta M]$, 
there exist $C''>0$ and $\kappa'_\epsilon>0$ such that for $\kappa\in (0, \kappa''_\epsilon)$, 

we have (since $|w-u|\leq L=\kappa M\leq D'''\kappa T$) (for some $C'''>0$)
\begin{multline*}
|S_{\zeta_{w,u}}(q^{-1}f)(\Phi_{\a,\beta}^{[\zeta u]}(x,y))-
S_{\zeta_{w,u}}(q^{-1}f)(\Phi_{\a,\beta}^{[\zeta u]}(x,y'))|\leq\\ C'''(D'''\kappa T)^{3/2}\delta_x+ C'''(D'''\kappa T)^{1/2}(2D'''\delta_x^{1/3}+\delta_y)\leq\\
 C'''(D'''\kappa)^{3/2}+\kappa^{1/2}C'''D'''(2D'''+1)\leq \epsilon/10.
\end{multline*}
\smallskip This finishes the proof of \eqref{forw.tight} and hence by Proposition~\ref{cocycle} completes the proof of Theorem \ref{thm:main2}. 
\end{proof}


\begin{thebibliography}{20}
\bibitem{AGH} L.~Auslander, L.~Green and F.~Hahn, {\em Flows on homogeneous spaces}, Princeton University Press, Princeton, N.J., 1963.
\bibitem{AFRU} A.~Avila, G.~Forni, D. Ravotti and C.~Ulcigrai, {\em Mixing for time-changes of nilflows}, preprint.
\bibitem{AFU} A.~Avila, G.~Forni and C.~Ulcigrai, {\em Mixing for time-changes of Heisenberg nilflows},
 Journal of Differential Geometry {\bf 89} (2011), 369--410.
\bibitem{BSZ}J.~Bourgain, P.~Sarnak, and T.~Ziegler, {\em Disjointness of Moebius from horocycle flows}, in From Fourier analysis and number theory to radon transforms and
geometry, vol. {\bf 28} of Dev. Math., Springer, New York, 2013, 67--83.
\bibitem{FayKa} B.~Fayad and A.~Kanigowski,  {\em Multiple mixing for a class of conservative surface flows},   Inventiones mathematicae {\bf 203} (2016), 555--614.
\bibitem{FKL} S.~Ferenczi,  J.~Kulaga-Przymus and M.~Lema{\'n}czyk,  {\em Sarnak's Conjecture--what's new},   in ``Ergodic Theory and Dynamical Systems in their Interactions with Arithmetics and Combinatorics'' , CIRM Jean-Morlet Chair, Fall 2016 (S.~Ferenczi, J.~Kulaga-Przymus and M.~Lema{\'n}czyk Eds.), Springer LNM, 2018.
\bibitem{FlaFo1} L.~Flaminio and~G. Forni, {\em Equidistribution of nilflows and applications to theta sums},  Ergodic Theory Dynam. Systems  {\bf 26} (2006), 409--433.
\bibitem{FlaFo2} L.~Flaminio and~G. Forni,  {\em Orthogonal powers and M\"obius conjecture for smooth time changes of horocycle flows},  preprint. 
\bibitem{FoSurvey} G.~Forni,  {\em Effective Equidistribution of nilflows and bounds on Weyl sums}, in ``Dynamics and Analytic Number Theory'', Proceedings of the Durham Easter School 2014. Editors: D. Badziahin, A. Gorodnik, N. Peyerimhoff, T. Ward. London Mathematical Society, LNS 437. Cambridge University Press, 2017.
\bibitem{FK} G.~Forni and A.~Kanigowski, {\em Time-changes of Heisenberg nilflows}, preprint,  arXiv:1711.05543v1.
\bibitem{FU} G.~Forni and C.~Ulcigrai, {\em Time-changes of horocycle flows},  Journal of  Modern Dynamics,   {\bf 6} (2) (2012), 251--273.
\bibitem{Ga-Le-Sch} P.~Gabriel, M.~Lema{\'n}czyk and  K.~Schmidt, {\em Extensions of cocycles for hyperfinite actions and applications}, 
Monatshefte f\"ur Mathematik, {\bf 123} (3)  (1997),  209--228.
\bibitem{Gla} E.~Glasner, {\em Ergodic Theory via Joinings},  Mathematical Surveys and Monographs
Vol. 101, American Mathematical Society, Providence, RI, 2003.
\bibitem{GT} B. Green and T. Tao, {\em The M\"obius function is strongly orthogonal to nilsequences}, Ann. of Math. {\bf 175} (2012), 541--566.
\bibitem{HK} B.~Hasselblatt and A.~Katok, {\em Principal Structures}, in Handbook of Dynamical Systems, Vol. 1A, Elsevier, 2002.
\bibitem{KK} A.~Kanigowski and J.~Kulaga-Przymus, {\em Ratner's property and mild mixing for smooth flows on surfaces}, Ergodic
Theory Dynam. Systems, {\bf 36} (2016), 2512--2537.
\bibitem{KL} A.~Kanigowski and M.~Lema{\'n}czyk, {\em Flows with Ratner's property have discrete essential centralizer}, accepted in Studia Math.
\bibitem{KKU} A.~Kanigowski, J.~Kulaga-Przymus and C.~Ulcigrai, {\em Multiple mixing and parabolic divergence in smooth area-preserving flows on 
higher genus surfaces}, to appear in JEMS,  arXiv:1606.09189v3.
\bibitem{KLU} A.~Kanigowski, M.~Lema{\'n}czyk and C.~Ulcigrai, {\em Disjointness and Sarnak Conjecture for smooth flows on surfaces}, preprint, arXiv:1810.11576.
\bibitem{Katok:CC} A.~B.~Katok, {\em Combinatorial Constructions in Ergodic Theory and Dynamics}. University Lecture Series Vol. 30. American Mathematical Society, Providence, RI, 2003.
\bibitem{RT} V.~V.~Ryzhikov and J.-P.~Thouvenot, {\em Disjointness, Divisibility, and Quasi-Simplicity of Measure-Preserving
Actions}, Funkts. Anal. Prilozh., Vol. {\bf 40} (3) (2006), 85-89.
\bibitem{Rat1} M.~Ratner, {\em  Horocycle flows, joinings and rigidity of products}, Ann. of Math. {\bf 118} (1983), 277-313.
\bibitem{Rat2} M.~Ratner,   {\em  Rigid reparametrizations and cohomology for horocycle flows}, Invent. Math. {\bf 88} (2) (1987), 341--374. 
\bibitem{Rav} D.~Ravotti,   {\em Mixing for suspension flows over skew-translations and time-changes of quasi-abelian filiform nilflows}, preprint, arXiv:1706.09385, to appear on Ergodic Theory Dynam. Systems.

\bibitem{Sar} P.~Sarnak, {\rm Three lectures on the Moebius Function randomness and dynamics}, publications.ias.edu/sarnak/
\end{thebibliography}
\end{document}